\documentclass[reqno]{amsart}

\usepackage{wasysym}
\usepackage{amssymb}
\usepackage{latexsym}
\usepackage{amsmath}
\usepackage{enumerate}
\usepackage{amsthm}
\usepackage{wasysym}
\usepackage{stmaryrd}
\usepackage{mathrsfs}
\usepackage[usenames]{xcolor}
\usepackage{pifont}
\usepackage{tikz}
\usepackage{cases}


\newcommand{\qee} {\hspace*{2mm}\hfill \ding{109}}

\renewcommand{\phi}{\varphi}

\definecolor{uuxgreen}{cmyk}{1,0,0.75,0}
\definecolor{uuxred}{cmyk}{0.2,1,0.9,0.1}
\definecolor{uuyblue}  {cmyk}{0.9,0.55,0,0}

\newcommand{\qedright}{\belowdisplayskip=-12pt}


\newtheorem{theorem}{Theorem}[section]
\newtheorem{define}[theorem]{Definition}
\newenvironment{definition}{\begin{define} \rm}{\qee\end{define}}
\newtheorem{exa}[theorem]{Example}
\newenvironment{example}{\begin{exa} \rm}{\qee\end{exa}}
\newtheorem{exerc}[theorem]{Exercise}

\newtheorem{conj}[theorem]{Conjecture}

\newtheorem{ques}[theorem]{Open Question}
\newenvironment{question}{\begin{ques} \rm}{\qee\end{ques}}
\newtheorem{lem}[theorem]{Lemma}
\newenvironment{lemma}{\begin{lem} \it}{\end{lem}}
\newtheorem{cor}[theorem]{Corollary}

\newtheorem{rem}[theorem]{Remark}
\newenvironment{remark}{\begin{rem} \rm}{\qee\end{rem}}



\newcommand{\medent}{\medskip\noindent}
 \newcommand{\tupel}[1]{{\langle #1 \rangle}}
\newcommand{\verz}[1]{\{ #1 \}}

\newcommand{\gnum}[1]{{\ulcorner #1 \urcorner}}


\newcommand{\concat}{%
	\mathord{
		\mathchoice
		{\raisebox{1ex}{\scalebox{.7}{$\frown$}}}
		{\raisebox{1ex}{\scalebox{.7}{$\frown$}}}
		{\raisebox{.7ex}{\scalebox{.5}{$\frown$}}}
		{\raisebox{.7ex}{\scalebox{.5}{$\frown$}}}
	}
}

\DeclareMathSymbol{\mlq}{\mathord}{operators}{``}
\DeclareMathSymbol{\mrq}{\mathord}{operators}{`'}

\usepackage[usenames]{xcolor}
\usepackage{proof}
\usepackage{mathrsfs}
\usepackage[babel]{csquotes} 
\usepackage[T1]{fontenc} 
\usepackage{yhmath} 
\usepackage{multicol} 
\usepackage{mdframed} 

\newcommand{\nub}{{\mathfrak n}}

\newcommand{\zero}{{\sf 0}}

\usepackage{natbib}


\newcommand{\noshow}[1]{{}}
\newcommand{\nusu}[1]{{\lceil #1 \rceil}}
\newcommand{\nusus}[1]{{\lfloor #1 \rfloor}}
\setlength{\fboxsep}{0.3cm}
\newcommand\Tstrut{\rule{0pt}{2.6ex}}         

\newmdenv[
topline=false,
bottomline=false,
skipabove=\topsep,
skipbelow=\topsep
]{siderules}

\title[Self-reference upfront]{Self-reference upfront\\ 
{\footnotesize A Study of Self-referential G\"odel Numberings}}

 \keywords{Self-reference, G\"odel numberings, arithmetic, truth-theories}

\subjclass[2000]
{03A05, 
03F30, 
03F40, 
}

\author{Balthasar Grabmayr}
\address{Department of Philosophy,
	Humboldt University of Berlin,
	Unter den Linden 6,	
	10099 Berlin,
	Germany}
\email{balthasar.grabmayr@gmx.net}
\author{Albert Visser}
\address{Philosophy, Faculty of Humanities,
	Utrecht University,
	Janskerkhof 13,
	3512BL~~Utrecht, The Netherlands}
\email{a.visser@uu.nl}
\date{\today}

\begin{document}
	
	\begin{abstract}
		In this paper we examine various requirements on the formalisation choices under which self-reference can be adequately formalised in arithmetic. In particular, we study self-referential numberings, which immediately provide a strong notion of self-reference even for expressively weak languages. The results of this paper suggest that the question whether truly self-referential reasoning can be formalised in arithmetic is more sensitive to the underlying coding apparatus than usually believed. 
		As a case study, we show how this sensitivity affects the formal study of certain principles of self-referential truth.
	\end{abstract}
	
	\maketitle
	
	\section{Introduction}

	Notions of self-reference feature prominently in the philosophical literature, yet they are notoriously elusive and mostly left imprecise. The study of certain logical and philosophical 
	issues, however, such as semantic paradoxes and the semantics of both natural and formal languages,
	requires a precise understanding of self-reference. For instance, an adequate evaluation 
	of the contentious hypothesis that circularity lies at the root of all semantic paradoxes requires a 
	satisfactory and precise explication of self-reference (see \citep{Leitgeb2002}). Following the 
	wide-spread custom of \textit{arithmetisation}, the explication and investigation of self-reference 
	is therefore typically carried out in a formal arithmetical framework.
	
	There are essentially two ways an arithmetical sentence can be taken to (directly) refer to itself, namely, in virtue of containing a term denoting (the code of) itself, or by means of quantification \citep{Leitgeb2002, Picollo2018}.\footnote{\label{ftn:Kripkeseminar}In his Princeton seminar on truth, September 29, 1982, Saul Kripke distinguishes these two notions of self-reference from \emph{demonstrative} self-reference, which is obtained by indexicals, such as the word \enquote{this}, and therefore is unattainable in standard formalisms of arithmetic. For proposals how such indexicals can be added to systems of arithmetic see \citep{Smullyan1984} and \citep{Fraassen1970}. We are grateful to Allen Hazen for sharing with us his transcript of Kripke's seminar and to Saul Kripke for his permission to use them.} In this paper we are only concerned with the first kind of self-reference, also called \enquote{self-reference by mention} or in short \enquote{m-self-reference} \cite[p.~581]{Picollo2018}. \cite{HalbachVisser1} trace this notion of self-reference back to the famous exchange of Kreisel and Henkin regarding Henkin's question as to whether sentences that state their own $\mathsf{PA}$-provability are provable in $\mathsf{PA}$. \cite{Heck2007}, for instance, considers m-self-reference the only legitimate way to formalise truly self-referential reasoning in arithmetic. A precise notion of m-self-referentiality in an arithmetical framework rests on three formalisation choices:
	\begin{enumerate}[i.]
		\item a formal language
		\item a G\"odel numbering
		\item a naming device
	\end{enumerate}
	
	Given the central importance of m-self-reference in the literature, the primary concern of this paper is to examine the formalisation choices under which m-self-reference is attainable. That is, we ask for which choices of (i)-(iii) we can find for each formula $A(x)$, with free variable $x$, a closed term $t$ which denotes the G\"odel code of $A(t)$.
	
	Let ${\mathcal{L}^0}$ be the arithmetical language which has $\zero$, $\mathsf{S}$, $+$ and $\times$ as its non-logical vocabulary. Let $\xi$ be a standard numbering of ${\mathcal{L}^0}$ and consider the canonical naming device which maps each number $n$ to its standard numeral $\underline{n}$. Taken together, these canonical formalisation choices do not, generally, permit the construction of m-self-referential sentences.\footnote{See Lemma~\ref{lem:unattainabilityofSR} for a generalisation of this fact.} There are two well-known routes to overcome this.
	
	The first route consists in enriching the arithmetical language by function symbols for primitive recursive functions, such that the resulting language ${\mathcal{L}}^+$ contains a term $\delta(x)$ which represents a diagonal function~$D$. Given a standard numbering $\xi^+$ of ${\mathcal{L}}^+$, a diagonal function $D$, adequate for these choices, maps (the $\xi^+$-code of) an ${\mathcal{L}}^+$-formula $A(x)$ with $x$ free to (the $\xi^+$-code of) its diagonalisation $A(\delta(\underline{\xi^+(A)}))$. This language extension can be achieved in different ways. Since $D$ is primitive recursive, one may simply add a function symbol for $D$ to ${\mathcal{L}^0}$, or one can add function symbols for other p.r.\ functions such as the substitution function and the numeral function and then represent $D$ as a complex term, etc. Let $\mathsf{R}_{{\mathcal{L}}^+}$ be the result of adding all true equations of the form $t = \underline{n}$ to the Tarski-Mostowski-Robinson theory $\mathsf{R}$, where $t$ is a closed ${\mathcal{L}}^+$-term. The Strong Diagonal Lemma then provides the existence of m-self-referential sentences:
	
	\begin{lem}[Strong Diagonal Lemma for ${\mathcal{L}}^+$]
		\label{classicdiaglemma}
		For every ${\mathcal{L}}^+$-formula $A(x)$ with $x$ as a free variable, there exists a closed ${\mathcal{L}}^+$-term $t$ such that $\mathsf{R}_{{\mathcal{L}}^+} \vdash t = \underline{\xi(A(t))}$.
	\end{lem}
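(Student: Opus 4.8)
The plan is to extract the self-referential term straight from the diagonal function, and to notice that the asserted derivability is then immediate from the definition of $\mathsf{R}_{\mathcal{L}^+}$; the only substantive work is the correct set-up of $D$.

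First I would spell out the diagonal function $D$. On input $n$, use the primitive recursive apparatus that accompanies a standard numbering to test whether $n$ is the code of an $\mathcal{L}^+$-formula $B$ whose sole free variable is $x$. If it is, form the closed term $\delta(\underline{n})$ --- this invokes the primitive recursive numeral function together with the primitive recursive operations of term formation and of substituting a closed term for a variable --- substitute it for $x$ in $B$, and let $D(n)$ be the code of $B(\delta(\underline{n}))$; otherwise set $D(n):=0$. Each ingredient is primitive recursive, hence so is $D$, and by construction $D(\xi(B)) = \xi(B(\delta(\underline{\xi(B)})))$ for every such $B$, where $\xi$ now denotes the standard numbering of $\mathcal{L}^+$. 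I would stress here that, although $D$ is described using the symbol $\delta$, there is no circularity: $\delta$ is merely a fixed function symbol of $\mathcal{L}^+$ with a fixed code, manipulated purely syntactically, so $D$ is an entirely ordinary arithmetical function. It is only afterwards that the hypothesis \enquote{$\mathcal{L}^+$ contains a term $\delta(x)$ representing $D$} is read as fixing the interpretation $\delta^{\mathbb{N}}=D$.

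Next, given $A(x)$ with $x$ free, set $n:=\xi(A)$ and $t:=\delta(\underline{n})$. Then $t$ is a closed $\mathcal{L}^+$-term, and since $\delta$ is interpreted by $D$ in the standard model, the value of $t$ in $\mathbb{N}$ equals $D(n)=\xi(A(\delta(\underline{n})))=\xi(A(t))$ (the two formulas being literally identical, as $t$ \emph{is} $\delta(\underline{n})$). Hence $t=\underline{\xi(A(t))}$ is a \emph{true} equation between a closed $\mathcal{L}^+$-term and its numeral. By the definition of $\mathsf{R}_{\mathcal{L}^+}$ --- it arises from $\mathsf{R}$ by adjoining all true equations of the form $s=\underline{m}$ with $s$ a closed $\mathcal{L}^+$-term --- this very equation is an axiom of $\mathsf{R}_{\mathcal{L}^+}$, so $\mathsf{R}_{\mathcal{L}^+}\vdash t=\underline{\xi(A(t))}$. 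Note that $\mathsf{R}$ itself is not used here; its presence matters only for the stronger applications of the lemma elsewhere.

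Modulo this bookkeeping the lemma is thus essentially immediate. The one point that genuinely needs care --- the \enquote{hard part}, such as it is --- is the verification that $D$ is primitive recursive and satisfies its defining equation, i.e.\ that the chosen numbering really does render term formation, the numeral function, and substitution primitive recursive, and that forming $\delta(\underline n)$ and substituting the closed term $t$ into $A$ raises no capture issues (it does not, as $t$ is closed). This is precisely what is packaged into the requirement that the numbering be \emph{standard}, so under that requirement nothing deeper is involved.
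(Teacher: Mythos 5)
Your proposal is correct and is essentially the argument the paper has in mind: the paper states this lemma without a separate proof, treating it as immediate from the preceding setup in which $D$ is defined to map $\xi(A)$ to $\xi(A(\delta(\underline{\xi(A)})))$, $\delta$ is a term representing $D$, and $\mathsf{R}_{\mathcal{L}^+}$ contains all true equations $t=\underline{n}$ for closed $\mathcal{L}^+$-terms $t$ as axioms. Your unpacking --- including the observation that there is no circularity in defining $D$ via the syntactic symbol $\delta$ before fixing $\delta^{\mathbb{N}}=D$, and that the final equation is literally an axiom of $\mathsf{R}_{\mathcal{L}^+}$ --- is exactly what the paper leaves implicit.
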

	
	The second route is based on so-called \textit{self-referential} G\"odel numberings. A numbering $\xi$ is called self-referential if, for any formula $A(x)$, we can find a number $n$ such that $n = \xi(A(\underline n))$. Self-referential numberings thus immediately provide m-self-referential sentences, without the need of extending the language (and the theory). The idea of self-referential G\"odel numberings can be traced back to \citep[footnote~6]{Kripke1975} and \citep[p.~80]{Feferman1984}. Constructions of self-referential numberings were given in Kripke's 1982 Princeton seminar (see footnote~\ref{ftn:Kripkeseminar}), in \citep{viss:sema89, viss:sema04}, in \citep{Heck2007} and in the Appendix of \citep{HalbachVisser2}. The last construction is for efficient numerals: see below.
	
	The second route is typically taken to be contrived and unsatisfactory since the self-referential numberings existing in the literature to date do not satisfy certain desirable properties. One such property is \emph{monotonicity}, which requires the G\"odel number of an expression to be larger than the G\"odel numbers of its sub-expressions. \cite{Halbach2018}, for instance, does not consider non-monotonic G\"odel numberings as adequate formalisation choices.\footnote{We do think more argument is needed to philosophically justify constraints on G\"odel numberings. However, our present interest is not in that discussion, but in the further implications of such constraints.} Indeed, in the study of self-reference, monotonicity is prevalently required to hold for any \textit{reasonable} or \textit{adequate} numbering. It is, thus, widely believed that for adequate choices of G\"odel numberings m-self-reference is not attainable in arithmetic formulated in ${\mathcal{L}^0}$ (see \citep{Heck2007}). Accordingly, the natural setting to formalise self-referential reasoning is typically based on the first approach, by increasing the expressiveness of the language ${\mathcal{L}^0}$. 
	
	The aim of the present paper is to further investigate this trade-off between the (term-)expressiveness of the language and the naturalness of the underlying coding. 
	In particular, we will show that the received view regarding the unattainability of m-self-reference in ${\mathcal{L}^0}$ is committed to much stronger assumptions on the coding apparatus than usually assumed in the literature.
	
	We will start by investigating which requirements on G\"odel numberings are sufficient to rule out self-referential numberings as admissible formalisation choices. We will first observe that when employing standard numerals, self-referential G\"odel numberings indeed cannot be monotonic (see Section~\ref{subsection:monotonicity}). Thus, by Halbach's standards, self-referential G\"odel numberings would be disqualified as adequate formalisation choices. They could be, at most, a technical tool providing an alternative proof of the G\"odel Fixed Point Lemma and, thus, of the Incompleteness Theorems. (See for instance \citep[Lemma 4.12]{Grabmayr2020}.)
	
	This is not the end of the story, however. While the attainability of m-self-referential sentences is not affected by the underlying naming device (as will become clear from inspection of Definition~\ref{def:attainabilityofSR}), we will show that the (in-)compatibility of monotonicity and self-referentiality of numberings depends crucially on this aspect of formalisation. In fact, we will show in this paper that we can produce effective self-referential monotonic numberings by basing the definition of \emph{self-referential} on efficient instead of standard numerals (see Section~\ref{section:construction2}~\&~Appendix~\ref{section:construction1}). That is, for any formula $A(x)$, we can find a number $n$ such that $n = \xi(A(\overline n))$, where $\overline{n}$ denotes the efficient numeral of $n$ (see Section~\ref{subsection:numerals}). Setting $t := \overline{n}$ hence yields $\mathsf{R} \vdash t = \underline{\xi(A(t))}$. The adequacy constraints on numberings have thus to be strictly more restrictive than effectiveness and monotonicity in order to rule out self-referential numberings and, in particular, m-self-referentiality in ${\mathcal{L}^0}$.
	
	In Section~\ref{section:numeralsasquotations}, we introduce a strengthened notion of monotonicity put forward by \cite{Halbach2018}, which captures the idea that (efficient) numerals are arithmetical proxies for quotations. A monotonic coding is called strongly monotonic if the code of the G\"odel numeral of an expression is larger than the code of the expression itself. This constraint on numberings is sufficiently restrictive to exclude self-referential numberings for any numeral system. However, we will show that even strong monotonicity is not restrictive enough to exclude m-self-referentiality. In fact, we will construct an effective and strongly monotonic numbering which gives rise to the Strong Diagonal Lemma for $\mathcal{L}^0$, thus providing m-self-referential sentences formulated in $\mathcal{L}^0$.
	
	In Section~\ref{section:compadequacy}, we introduce computational constraints which are more restrictive than effectiveness and which may serve as additional adequacy constraints for numberings. A G\"odel numbering is called $\mathcal E$-adequate if it represents a large portion of syntactic relations and operations by \textit{elementary} relations and operations on $\omega$. We show that the numberings constructed in this paper are $\mathcal E$-adequate in this sense. Hence, strong monotonicity and $\mathcal E$-adequacy taken together, are once again not restrictive enough to exclude m-self-referentiality in $\mathcal{L}^0$.
	
	In Section~\ref{section:regularity}, we briefly discuss the constraint of \textit{regularity} on numberings for languages $\mathcal{L} \supseteq {\mathcal{L}^0}$ due to \cite{Heck2007}. Indeed, this constraint is sufficiently restrictive to rule out the existence of m-self-referential sentences formulated in $\mathcal{L}$. However, we construct a decent numbering which is \textit{not} regular for $\mathcal{L}^0$. Regularity thus hardly serves as a necessary constraint for admissible numberings and m-self-referentiality in $\mathcal{L}^0$ is in the clear.
	
	The obtained results suggest the following disjunctive conclusion: when formalising m-self-reference in arithmetic, the adequacy constraints on reasonable numberings are more restrictive than widely assumed, or m-self-reference can be adequately formalised in a less expressive language than usually believed.\footnote{In this paper, we do not take a stand on which of the disjuncts obtains.} More specifically, we conclude that either the constraints on reasonable numberings are more restrictive than $\mathcal E$-adequacy and strong monotonicity taken together, or m-self-reference is already attainable in ${\mathcal{L}^0}$.
	
	We close, in Section~\ref{section:discussion}, by showing how these results bear on the study of axiomatic truth theories. In particular, we show that the constraints of $\mathcal E$-adequacy and (strong) monotonicity taken together are not sufficient to determine the consistency of certain type-free truth theories. Thus, the formalisation of certain informal principles of truth in an arithmetical setting is highly sensitive to the underlying formalisation choices. These results raise doubts as to what extent such axiomatic theories can be taken to faithfully reflect informal reasoning regarding the underlying principles of truth.
	
	Finally, we hope to provide entertaining examples of G\"odel numberings which bring to light some surprising subtleties regarding the interaction between self-reference and the employed formalisation devices.
	
	\section{Technical Preliminaries}
	
	In this section, we introduce the necessary basic notions concerning syntax, theories and numberings.
	
	\subsection{Languages} 
	\label{subsection:language}
	
	Languages can be represented in many ways: as free algebras, as sets of strings, as labeled directed acyclic graphs, as \dots\ These choices are both philosophically and technically important. In our context of the study of G\"odel numberings, the choice of a format for the language will often suggest a particular choice of numerical representation.
	E.g., labeled directed acyclic graphs can be modeled in the hereditarily finite sets and we
	can map these sets into numbers using the Ackermann coding. It would be very natural to base a G\"odel numbering on this idea. In this paper, we will mainly employ the algebraic perspective and on the string perspective. This does not reflect a philosophical standpoint, but is just a limitation dictated by length. In Section~\ref{subsection:nondominatingnumberings}, we will briefly consider the idea of sharing material in syntax.
	
	Let $\mathcal{L}^0$ be the language of first-order arithmetic, which contains $\bot$, $\top$, ${=}$, $\neg$, $\wedge$, $\vee$, ${\rightarrow}$, $\forall$ and $\exists$ as logical constants, as well as the non-logical symbols $\zero$, $\mathsf{S}$, $+$ and $\times$. 
	The infix expressions of ${\mathcal{L}^0}$ are given as follows. 
	\begin{itemize}
		\item
		$x::= {\sf v} \mid x'$
		\item
		$t ::= x \mid \zero \mid {\sf S}t \mid (t+t) \mid (t\times t)$
		\item
		$A ::= \bot \mid \top \mid t=t \mid \neg A \mid (A \wedge A) \mid (A \vee A) \mid (A \to A) \mid \forall x\,A \mid \exists x\, A$
	\end{itemize}
	
	\noindent
	Alternatively, we will sometimes consider ${\mathcal{L}^0}$ to be given in Polish notation:
	\begin{itemize}
		\item
		$x::= {\sf v} \mid x'$
		\item
		$t ::= x \mid \zero \mid {\sf S}t \mid {\sf A} t t \mid {\sf M} t t$
		\item
		$A ::= \bot \mid \top \mid {=}tt \mid \neg A \mid \wedge A A \mid \vee A A \mid {\to} A A \mid \forall x\,A \mid \exists x\, A$
	\end{itemize}
	
	Let $\mathbb{N}$ be the standard interpretation of $\mathcal{L}^0$, with $\omega$ as its domain. 
	In this paper, we consider arbitrary languages $\mathcal{L} \supseteq {\mathcal{L}^0}$ with finite signature such that each constant symbol $c$ and function symbol $f$ of $\mathcal{L}$ has an intended interpretation $c^{\mathbb{N}}$ and $f^{\mathbb{N}}$ in $\mathbb{N}$. Thus, in particular, the evaluation function $\mathsf{ev}$ is well-defined on closed terms of $\mathcal{L}$. For any such language $\mathcal{L}$, both definitions above extend in the obvious way. When we do not specify the employed notation system, we assume $\mathcal{L}$ to be given in infix notation.
	
	\subsection{Theories}
	We will mainly consider theories in the language of arithmetic. Our basic theory is
	be the Tarski-Mostowski-Robinson theory $\mathsf{R}$ (see \cite[p.~53]{TarskiMostowskiRobinson}).
	We will extend {\sf R} in a standard way to $\mathsf{R}_{{\mathcal{L}}}$ by adding all true equations of the form $t = \underline{n}$ to $\mathsf{R}$, where $t$ is a closed ${\mathcal{L}}$-term. Of course, this requires the intended interpretations of the new function symbols in the background.
	
	\subsection{G\"odel Numberings}
	
	Let $S$ be a domain which permits a robust notion of effectiveness for functions $\xi \colon S \to \omega$.\footnote{Traditionally, the notion of effectiveness for functions with domain $S$ is reduced to Turing computability or recursiveness by coding the elements of $S$ as strings or numbers respectively. However, different coding devices in general yield different extensions of effective functions. Here, we require that $S$ is a domain for which the notion of effectiveness does not depend on such choices. For instance, $S$ may be taken to be a space of finite objects, in the sense of \cite{Shoenfield1972}. In this paper, we will be solely concerned with domains of strings and other syntactic expressions which permit robust notions of effectiveness. Due to limitations of space we leave a discussion of this important matter for another occasion.} We say that a function $\xi \colon S \to \omega$ is a \textit{G\"odel numbering} or \textit{coding} of $S$, if $\xi$ is 
	injective and effective.
	We also call $\xi(A)$ the ($\xi$-)\textit{code} of $A$ (for terms and formul{\ae} $A$). In this paper we consider numberings of languages $\mathcal{L}$ given in 
	infix or Polish notation. We note that choice of infix language versus Polish language is, in a sense, immaterial, since the 
	two languages / representations of the language are connected by a standard bijection. 
	
    We occasionally also consider a language as embedded in a set of strings. Then, a numbering of the strings over the given finite alphabet will induce a numbering of the language. For instance, the infix expressions of ${\mathcal{L}^0}$ can be conceived of as strings over an alphabet containing $17$ symbols, while its Polish notations can be formulated as strings over an alphabet with $15$ symbols. (So, when we consider the language as embedded in strings the difference between infix and Polish suddenly does have some role.)
	
	We note that, e.g., if we would think of formul{\ae} as labeled directed acyclic graphs (dags), the numbering's domain $S$ could be the totality of all finite labeled dags with labels in a fixed alphabet. Such choices often reflect a syntax theory. If we view syntax as \emph{sui generis}, $S$ will usually be the set of expressions itself. If we 
	view the syntactic objects as specima of a wider variety $X$, we will usually take $S :=X$.
	
	\begin{remark}	
		We could allow one extra degree of freedom for G\"odel numberings: 
		we could drop functionality. 
		For example, suppose we want to an existing
		coding of syntax in the finite sets to define our G\"odel numbering
		via coding the finite sets as numbers. There are various ways to
		code the finite sets. Suppose we do it by interpreting sequences first and then
		ignoring the order of the components. This gives us a non-functional G\"odel numbering.
		Note that it would still be injective.
		For the purposes of the present paper we will not need the extra flexibility of non-functionality.
	\end{remark}

	\begin{remark}
		The G\"odel numbering employed in \citep{fefe:arit60} is a nice example of a G\"odel numbering that looks directly at the language-as-algebra without considering it as embedded in strings.
		For Feferman's coding, the choice between infix and Polish is irrelevant.
	\end{remark}
	
	An important example of a numbering of strings is the \textit{length-first ordering}. Let $\mathcal A^\ast$ be a finite alphabet and suppose some ordering of $\mathcal A$ is given. We  order the strings of $\mathcal A$  using the \emph{length-first ordering} $(\alpha_n)_{n\in \omega}$ in which	we enumerate the strings according to increasing length, where the strings of same length are ordered alphabetically. We set $\mathfrak g(\alpha) = n$ if $\alpha = \alpha_n$.\footnote{We note that $\mathfrak{g}$ has the alphabet and the ordering
		on the alphabet as hidden parameters. To make our notation not too heavy we will always suppress these data.} We write $|\alpha|$ for the length of $\alpha$. 
	
	Throughout this paper, we will use the following basic fact about the length-first ordering.
	
	\begin{lemma}
		\label{lem:lengthfirstestimates}
		Suppose the alphabet $\mathcal A$ has $N \geq 2$ letters. We then have:
		\[ \frac{N^{|\alpha|}-1}{N-1} \leq \mathfrak g(\alpha)  <  \frac{N^{|\alpha|+1}-1}{N-1}.\]
		It follows that $|\alpha| \leq \mathfrak g(\alpha)$.
		Moreover, whenever $|\alpha| < |\beta|$, then
		$\mathfrak g(\alpha) < \mathfrak g(\beta)$.
	\end{lemma}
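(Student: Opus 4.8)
The plan is to count strings by length. Over an alphabet with $N$ letters there are exactly $N^k$ strings of length $k$, so the number of strings of length strictly less than $\ell$ equals $\sum_{k=0}^{\ell-1} N^k = \frac{N^\ell-1}{N-1}$, a finite geometric sum (valid since $N \geq 2$, so $N \neq 1$). The key structural observation is that in the length-first ordering the strings of length $\ell$ form one consecutive block: they are preceded by precisely the $\frac{N^\ell-1}{N-1}$ strings of smaller length, and together with those strings they make up the first $\frac{N^{\ell+1}-1}{N-1}$ strings in the enumeration.

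Granting this, if $|\alpha| = \ell$ then the index $\mathfrak g(\alpha)$ lies in that block, i.e.
\[ \frac{N^\ell-1}{N-1} \;\leq\; \mathfrak g(\alpha) \;\leq\; \frac{N^{\ell+1}-1}{N-1}-1 \;<\; \frac{N^{\ell+1}-1}{N-1}, \]
which is exactly the pair of displayed inequalities. For the first consequence, $|\alpha| \leq \mathfrak g(\alpha)$, I would note that $\frac{N^\ell-1}{N-1} = 1 + N + \cdots + N^{\ell-1}$ is a sum of $\ell$ terms each at least $1$, hence at least $\ell = |\alpha|$; together with the lower bound just established this gives the claim (the case $\ell = 0$ being trivial). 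For the last clause, if $|\alpha| < |\beta|$ then $|\alpha| + 1 \leq |\beta|$, and since $\ell \mapsto \frac{N^\ell-1}{N-1}$ is strictly increasing for $N \geq 2$, we obtain
\[ \mathfrak g(\alpha) \;<\; \frac{N^{|\alpha|+1}-1}{N-1} \;\leq\; \frac{N^{|\beta|}-1}{N-1} \;\leq\; \mathfrak g(\beta). \]

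There is no genuine obstacle here; the statement is essentially a bookkeeping fact about the geometric series $\sum N^k$. The one point that warrants care is the indexing convention: the enumeration is taken to start at $n = 0$ with the empty string, so that the index of a string coincides with the number of strings strictly preceding it. This is precisely what makes the lower bound $\frac{N^{|\alpha|}-1}{N-1}$ correct as stated (rather than that quantity shifted by one), and it is worth stating explicitly before running the count.
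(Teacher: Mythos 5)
Your proof is correct and is in substance the same as the paper's: both reduce the claim to the geometric sum $\sum_{k=0}^{\ell-1}N^k=\frac{N^{\ell}-1}{N-1}$ counting the strings of length less than $\ell$, from which the two displayed bounds and the remaining clauses follow by the same arithmetic. The paper merely packages this count as the observation that $\mathfrak g(\alpha)$ is sandwiched between the $N$-adic numbers written with $|\alpha|$ and $|\alpha|+1$ ones, so the difference is presentational only.
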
 
	
	\begin{proof}
		Clearly, for any string $\alpha$ of $\mathcal A$ we have
		\[
		\overbrace{1\cdots 1}^{|\alpha| \times} \leq 
		\mathfrak g(\alpha) \leq \overbrace{1\cdots 1}^{|\alpha|+1\; \times},
		\]
		where, for any $m$, $\overbrace{1\cdots 1}^{m \times}$ is considered as an $N$-adic notation. The claim follows immediately from a well-known property of geometric series:
		\qedright
		\[
		\overbrace{1\cdots 1}^{m \times} = \sum_{i=0}^{m - 1} N^i = \frac{N^m-1}{N-1}
		\]
	\end{proof}

	\subsection{Naming Devices}
	\label{subsection:numerals}
	
	Let $\mathsf{ClTerm}_{\mathcal{L}}$ denote the set of closed ${\mathcal{L}}$-terms. We call a function ${\nu \colon \omega \to \mathsf{ClTerm}_{\mathcal{L}}}$ a \textit{numeral function for} $\mathcal{L}$, if $\nu$ is injective and effective and the closed term $\nu(n)$ has value $n$, for each $n \in \omega$. We call $\nu(n)$ the \emph{$\nu$-numeral} of $n$. Standard numerals form a canonical choice of a numeral function and are defined as follows.
	\begin{itemize}
		\item
		$\alpha ::= \zero \mid {\sf S}\alpha$.
	\end{itemize}
	
	\noindent
	We write $\underline n$ for the ordinary, or standard, numeral with value $n$, and also write $\underline{\cdot}$ for the standard numeral function. These numerals provide unique normal forms for the natural numbers.
	These normal forms reflect the fact that we naturally read the axioms for addition and multiplication as directed.
	We have $(x+\zero) \rightsquigarrow x$, $(x+{\sf S}y) \rightsquigarrow {\sf S}(x+y)$, $(x\times \zero) = \zero$, $(x \times {\sf S}y) \rightsquigarrow ((x\times y)+x)$.
	The numerals are the normal forms for the closed terms in this reduction system. 
	
	In the literature on weak systems, however, another kind of numeral
	is used that corresponds to binary or dyadic notations. The main reason for this is simply that
	the arithmetisation of the numeral function for standard numerals and the usual G\"odel numberings
	exhibits exponential growth. This holds for example for the G\"odel numbering
	$\mathfrak g$ based on the length-first ordering.
	The numeral function for the numerals in the other style is more efficient.
	Thus, these alternative numerals are also called \emph{efficient numerals}. 
	We define efficient numerals for dyadic representations as follows.
	\begin{itemize}
		\item
		$\alpha ::= \zero \mid {\sf S}({\sf SS}\zero \times \alpha) \mid  {\sf SS}({\sf SS}\zero \times \alpha)$.
	\end{itemize}
	
	\noindent
	We write $\overline n$ for the efficient numeral with value $n$ and write $\overline{\cdot}$ for the efficient numeral function. We note that efficient numerals also correspond more naturally to reduction systems in a different
	signature, e.g.\ a theory of strings formulated
	with two successors. 
	
	\begin{remark}
		Consider the G\"odel numbering $\mathfrak g$ for the language
		$\mathcal{L}^0$ based on the length-first ordering, where we assume we used infix notations. We note that the cardinality of the alphabet is 17. Consider any number $n$. 
		
		Clearly, we have $|\underline n|= n+1$. So, $\frac{17^{n+1}-1}{16} \leq \mathfrak g(\underline n)$. So $\mathfrak g(\underline n)$
		has an exponential lower bound in $n$.
		
		We give an upper bound for $\mathfrak g(\overline n)$. Let the length of the dyadic notation of $n$ be $k$. We have:
		$ 2^k-1 \leq n < 2^{k+1}-1$.
		We can estimate the length of the dyadic numeral $\overline n$
		by considering the worst case where the dyadic notation of
		$n$ is a string of 2's. We find that $|\overline n| \leq 8k+1$.
		Thus, using that $17<2^5$, we see that:
		\[\mathfrak g(\overline n) < \frac{17^{8k+2} -1}{16} \leq 64\cdot 2^{40k} \leq 64\cdot (n+1)^{40}.\]
		So, we see that $ \mathfrak g(\overline n)$  has a polynomial upper bound in $n$.
	\end{remark}
	
	\section{Formalisations of m-Self-Reference}
	
	The notion of self-reference considered here is defined by means of a reference relation on sentences. According to \cite{Leitgeb2002}, \enquote{a singular sentence might [...] be defined to refer to all the referents of all of its singular terms, and only to them} (p.~4). A sentence is then said to be self-referential if and only if it refers to itself. By employing a G\"odel numbering, this notion of self-reference can be formalised in an arithmetical framework according to the following definition.\footnote{Picollo distinguishes the formal notions of m-reference \textit{simpliciter} (\citeyear{Picollo2018}) and alethic m-reference (\citeyear{Picollo2019a}). While the former is intended to capture a pre-theoretical notion of reference \textit{simpliciter} as indicated by the above quotation by Leitgeb, the latter is more restrictive and specifically tailored to studying the reference patterns which underlie paradoxical expressions in the context of truth. Even though Definitions~\ref{def:mselfref} \& \ref{def:attainabilityofSR} capture m-self-reference \textit{simpliciter}, the results of this paper also apply to alethic m-self-reference.}
	
	\begin{definition}
		\label{def:mselfref}
		A sentence of the form $A(t)$ is called \emph{m-self-referential} with respect to the formula $A(x)$, if the closed term $t$ denotes (the code of) $A(t)$.
	\end{definition}
	
	This notion of self-reference is for example captured by the Kreisel-Henkin Criterion for self-reference (see \cite[p.~684]{HalbachVisser1}). According to this criterion, given a formula $A(x)$ expressing a property $P$, a sentence $A(t)$ says of itself that it has property~$P$ iff $A(t)$ is m-self-referential with respect to $A(x)$ (in the sense of the above definition).
	
	In order to make this notion of m-self-reference mathematically precise, Definition~\ref{def:mselfref} has to be specified with respect to the formalisation choices (i)-(iii). Accordingly, we call a triple $\langle \mathcal{L}, \xi, \nu \rangle$ a \textit{formalisation choice}, if
	\begin{itemize}
		\item $\mathcal{L}$ is a language as specified in Section~\ref{subsection:language};
		\item $\xi$ is a G\"odel numbering of $\mathcal{L}$;
		\item $\nu$ is a numeral function for $\mathcal{L}$.
	\end{itemize}
	
	\begin{definition}
		\label{def:attainabilityofSR}
		We say that m-self-reference is \emph{attainable} for a formalisation choice $\langle \mathcal{L}, \xi, \nu \rangle$, if for each $\mathcal{L}$-formula $A(x)$ there exists a closed $\mathcal{L}$-term $t$ such that ${\mathsf{R}_{\mathcal{L}} \vdash t = \nu(\xi(A(t)))}$.\footnote{We intend, of course, the call-by-value reading here: $\nu(\xi(A(t)))$ yields some term $u$, such that $\mathsf{R}_{\mathcal L}\vdash t=u$.} 
	\end{definition}
	
	It is easy to see that the attainability of m-self-reference is invariant regarding the choice of the numeral function. That is, m-self-reference is attainable for $\langle \mathcal{L}, \xi, \nu_1 \rangle$ iff m-self-reference is attainable for $\langle \mathcal{L}, \xi, \nu_2 \rangle$, for any language $\mathcal{L}$, numbering $\xi$ and numeral functions $\nu_1$ and $\nu_2$ for $\mathcal{L}$. Hence, instead of requiring the provability of $t = \nu(\xi(A(t)))$ in Definition~\ref{def:attainabilityofSR}, one may equivalently require that ${\mathsf{R}_{\mathcal{L}} \vdash t = \underline{\xi(A(t))}}$. The reason for considering arbitrary numeral functions as parameters in formalisation choices is that certain adequacy conditions on numberings will be shown to be sensitive to this aspect of formalisation.
	
	Moreover, instead of using provability in $\mathsf{R}_{\mathcal{L}}$, one can equivalently require in Definition~\ref{def:attainabilityofSR} that $\mathbb{N} \models t = \underline{\xi(A(t))}$, since $\mathsf{R}_{\mathcal{L}}$ proves every true equation of closed $\mathcal{L}$-terms.
	
	\subsection{Adequacy of Formalisation}
	\label{subsection:adequacy}
	
	In formal studies of truth, self-reference and the semantic paradoxes, it is common practice to employ natural numbers as theoretical proxies for syntactic expressions. Accordingly, certain designated arithmetical domains or theories serve as domains of expressions or syntax theories respectively. This practice of \textit{arithmetisation} rests on G\"odel numberings, which can be seen as translation devices between expressions and numbers. However, as a consequence of intensionality phenomena in metamathematics, not every numbering, i.e., injective and effective function, can be considered an equally adequate candidate.
	
	When formalising notions such as truth or (self-)reference over an arithmetical domain or theory, it is therefore essential that the underlying G\"odel numbering constitutes an \textit{adequate} translation device between expressions and numbers. Only then are we justified in considering the given arithmetical framework a domain or theory of expressions and in taking the formalised notions to apply to linguistic expressions as intended. In order to guarantee a faithful formalisation of the given philosophical notions, \textit{adequate choices of formalisation} are thus required.
	
	It is notoriously difficult to precisely characterise the notion of adequacy for numberings for a given metamathematical or philosophical purpose. We will therefore adopt a more modest approach in this paper by discussing certain \textit{necessary} conditions for adequate numberings which can be found in the literature on self-reference and axiomatic truth theories.
	
	\subsubsection{Monotonicity}
	
	Perhaps, the most prominent adequacy constraint is \emph{monotonicity}. Let $S$ be a set and let $\vartriangleleft$ be a strict partial order relation on $S$. A G\"odel numbering $\xi$ of $S$ is called \emph{monotonic with respect to $\vartriangleleft$}, if $\alpha \vartriangleleft \beta$ implies $\xi(\alpha) < \xi(\beta)$, for all $\alpha, \beta \in S$. Let $\mathcal{L}$ be a language given in infix or Polish notion (see Section~\ref{subsection:language}). We call a numbering $\xi$ of $\mathcal{L}$ \textit{monotonic}, if it is monotonic with respect to the strict sub-expression relation $\prec$ given on infix or Polish notations respectively. Monotonicity in this context can therefore be equivalently characterised by requiring a numbering $\xi$ of $\mathcal{L}$ to satisfy the following three conditions:
	\begin{enumerate}[M1.]
		\item \label{M1} for all $\mathcal{L}$-terms $s,t$, if $s \prec t$ then $\xi(s) < \xi(t)$;
		\item \label{M2} for all $\mathcal{L}$-formul{\ae} $A, B$, if $A \prec B$ then $\xi(A) < \xi(B)$;
		\item \label{M3} for all $\mathcal{L}$-terms $s$ and formul{\ae} $A$, if $s \prec A$ then $\xi(s) < \xi(A)$.
	\end{enumerate}
	Thus, when we consider numberings of well-formed expressions, we do not demand monotonicity with respect to sub-\emph{strings}. E.g., in case $\xi((A \wedge B))$ would be $\tupel{7,\tupel{\xi(A),\xi(B)}}$, where $\tupel{\cdot,\cdot}$ is the Cantor Pairing, the brackets do not appear in the code, so we do not need a G\"odel number for a bracket. However, when considering numberings of strings, monotonicity may be also required with respect to the (strict) sub-string relation. Clearly, any numbering which is monotonic in this sense also satisfies the conditions M1 - M3.
	
	\cite{Halbach2018} for instance does not consider non-monotonic G\"odel numerings as adequate formalisation choices. Indeed, the study of self-reference is typically based on numberings which are required to be monotonic. This requirement is explicit for instance in \cite[footnote 4]{Milne2007}, \cite[p.~33]{Halbach2014}, \cite[pp.~573f.]{Picollo2018}, \cite[footnote 4]{Picollo2019a} and \cite[footnote 3]{Picollo2019b} (where Milne requires monotonicity with respect to the sub-string relation). A good example of an application of (a strengthened version of) monotonicity is the proof of the falsity of the $\Sigma^0_1$-truth teller for fixed-point operators with certain good properties. See \cite[Theorem 7.7]{HalbachVisser2}.
	
	One way to motivate this constraint as a necessary condition of adequacy for numberings proceeds as follows. Let $\leq$ be the usual less-than (or equal) relation on numbers and let $\preceq$ denote the sub-expression relation on the given domain $\mathcal{L}$ of expressions. One may conceive of $\preceq$ as a parthood relation on $\mathcal{L}$.
	One may endow $\leq$ with such a mereological interpretation 
	by representing numbers as strings of strokes. 
	Another approach is to represent numbers as Von Neumann ordinals, and to understand the parthood relation on sets as the containment relation $\subseteq$ (see e.g.~\citep{Lewis1991}). Then $\leq$ is indeed a parthood relation on numbers.\footnote{Note that this interpretation breaks down for other reasonable representations, such as Zermelo ordinals or equivalence classes of finite sets under the equivalence relation of equinumerosity. For a critical discussion of monotonicity, see \citep[Section 3.4]{Grabmayr2020}.}	Taking parthood as an important structural feature of the domain of expressions, adequate numberings may then be required to preserve structure in virtue of representing the parthood relation $\preceq$ on $\mathcal{L}$ by a relation on $\omega$ contained in the parthood relation $\leq$ on numbers, which is tantamount to requiring monotonicity.	We will introduce further adequacy conditions for numberings in later sections of this paper.\\
	
	\subsubsection{Numerals}
	
	Thus far, we have only discussed adequacy with respect to G\"odel numberings. It remains to be clarified which numeral functions constitute adequate formalisation choices. One approach is to characterise a numeral function as adequate iff it corresponds to an \emph{acceptable} or \emph{canonical} naming system for the natural numbers (see \citep{Shapiro1982}, \citep{Horsten2005}). Note that on this approach, standard numerals as well as efficient numerals qualify as adequate numeral systems. 
	
	One way to view the natural numbers is as the free algebra for a unary function and one generator. Addition and multiplication are, then, defined by recursion, where the possibility of such recursions is guaranteed by the fact that we have a free algebra. From this point of view, the standard or tally numbers indeed seem to have a preferred status: they are the standard syntactic representations of the elements of the free algebra. However, we would like to point out that the point of view of this specific free algebra is not the only way to think of the natural numbers. In a sense, viewing the natural numbers as this free algebra \emph{is} taking the tally numerals to be the basic numerals. Alternatively, we could take the dyadic representation to be primary and work with a theory with two successors. In such a context, the dyadic numerals would be the usual syntactic representations. Similarly, the notion of natural number as finite cardinal does not automatically dictate zero and successor to be the preferred signature.
	
	We will not deal with the question of the status of numerals in this paper and simply assume that both standard numerals as well as efficient numerals are adequate choices of a naming device. For a further discussion the reader is referred to \citep{Auerbach1994}.
	
	\subsection{Monotonicity and Self-Referentiality}
	\label{subsection:monotonicity}
	
	The notion of self-referential numberings can be generalised to arbitrary numeral functions.
	
	\begin{definition}
		A numbering $\xi$ is called \emph{self-referential for a numeral function $\nu$} if, for any formula $A(x)$, there exists $n \in \omega$ such that $n=\xi( A(\nu(n)) )$.
	\end{definition}
	
	As opposed to the notion of m-self-reference for sentences (see~Definition~\ref{def:mselfref}), self-reference for numberings is a mere technical concept and not intended to capture a pre-theoretical or philosophical notion. However, as we have seen, self-referential numberings immediately yield m-self-referential sentences.
	
	In the following sections we will investigate whether adequate numberings can be self-referential, i.e., whether self-referential numberings are indeed as contrived as typically believed. We will first examine whether monotonicity serves as an adequacy condition on numberings which is sufficiently restrictive to exclude self-referential numberings and will then turn to additional more restrictive adequacy constraints for numberings.
	
	Let $\mathsf{st} \colon \mathsf{ClTerm}_{\mathcal{L}} \to \omega$ be the function assigning to each closed $\mathcal{L}$-term the number of its proper sub-terms. We then get the following incompatibility result.
	
	\begin{lem}
		\label{lem:limitation:monotonicandSR}
		Let $\nu$ be a numeral function such that there is a constant $c \in \omega$ with $n - \mathsf{st}(\nu(n)) < c$ for all $n \in \omega$. Then, there is no numbering which is self-referential for $\nu$ and satisfies \textnormal{M2} and \textnormal{M3}.
	\end{lem}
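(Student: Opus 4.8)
The plan is to argue by contradiction. Suppose $\xi$ is self-referential for $\nu$ and satisfies M2 and M3. The idea is to hand self-referentiality a formula whose diagonalisation is forced to have more pairwise distinct proper subexpressions of \emph{small} $\xi$-code than there are small codes available. Concretely, set $k := c$ and consider the $\mathcal L$-formula
\[
  A(x)\ :=\ \underbrace{\neg \cdots \neg}_{k}\,(x = x),
\]
which has $x$ as a free variable because $\mathcal L \supseteq \mathcal L^0$. By self-referentiality there is an $n \in \omega$ with $n = \xi\big(A(\nu(n))\big)$; write $B := A(\nu(n)) = \underbrace{\neg \cdots \neg}_{k}\,(\nu(n) = \nu(n))$.

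Next I would enumerate enough proper subexpressions of $B$. On the term side, $\nu(n)$ together with its proper subterms gives $1 + \mathsf{st}(\nu(n))$ pairwise distinct terms, each of which is $\prec B$: for $\nu(n)$ itself this is immediate, and for a proper subterm $s \prec \nu(n)$ it follows by transitivity of $\prec$ from $\nu(n) \prec (\nu(n) = \nu(n)) \preceq B$. Hence, by M3, each of these terms has $\xi$-code strictly below $\xi(B) = n$. On the formula side, the $k$ formulae $\underbrace{\neg \cdots \neg}_{j}\,(\nu(n) = \nu(n))$ for $0 \le j < k$ are pairwise distinct and each is $\prec B$, so by M2 each has $\xi$-code strictly below $n$ as well. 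Since terms and formulae are syntactically disjoint and each group is internally distinct, we have produced $1 + \mathsf{st}(\nu(n)) + k$ pairwise distinct expressions, and injectivity of $\xi$ sends them to that many distinct elements of $\{0, 1, \dots, n-1\}$; therefore $1 + \mathsf{st}(\nu(n)) + k \le n$. But the hypothesis $n - \mathsf{st}(\nu(n)) < c = k$ gives, by integrality, $\mathsf{st}(\nu(n)) \ge n - k + 1$, so $1 + \mathsf{st}(\nu(n)) + k \ge n + 2 > n$ — a contradiction.

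There is no real obstacle here beyond bookkeeping; the points to handle with care are: choosing $A(x)$ so that $x$ genuinely occurs in it (hence $(x = x)$ rather than an atom such as $\bot$), so that $\nu(n)$ really is a proper subexpression of $B$; verifying the pairwise distinctness of the listed terms and formulae (using well-foundedness of $\prec$ for the terms, and reading $\mathsf{st}(\nu(n))$ as the number of \emph{distinct} proper subterms); and the off-by-one when passing from $n - \mathsf{st}(\nu(n)) < c$ to $\mathsf{st}(\nu(n)) \ge n - c + 1$. The conceptual content is simply that the assumption on $\nu$ forces $\nu(n)$ to contain on the order of $n$ distinct subterms, all of which M3 pins below $\xi(A(\nu(n))) = n$, while padding $A$ with $c$ negations contributes, via M2, $c$ further distinct formulae below $n$ — enough to overshoot the $n$ available codes. (One could instead pad inside the term, taking $A(x) := (\mathsf S^{k} x = \mathsf S^{k} x)$ and using M3 alone; the negation version is chosen because it makes the role of both M2 and M3 transparent.)
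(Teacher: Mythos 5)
Your proof is correct and takes essentially the same route as the paper's: pad $A(x)$ with $c$ extra connective layers (the paper uses $c+1$ disjunctions of an arbitrary $B(x)$ with $x$ free, you use $c$ negations of $x=x$), diagonalise to get $n=\xi(A(\nu(n)))$, and use M2, M3 together with the many distinct proper subterms of $\nu(n)$ to force $n\geq \mathsf{st}(\nu(n))+c$, contradicting the hypothesis. The only difference is presentational: you run an explicit pigeonhole count over distinct subexpressions and injectivity of $\xi$, where the paper telescopes a chain of inequalities.
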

	
	\begin{proof}
		Assume that $\xi$ is self-referential for $\nu$ and satisfies M2 and M3. Let $B(x)$ be an $\mathcal{L}$-formula in which $x$ does occur freely at least once. Set
		\[
		A(x) := \underbrace{B(x) \vee \ldots \vee B(x)}_{(c+1) \times}.
		\]
		Then there exists $n \in \omega$ such that $\xi(A(\nu(n))) = n$. By M2 and M3 we then get
		\begin{align*}
		n & = \xi(A(\nu(n)))\\
		& = \xi(\underbrace{B(\nu(n)) \vee \ldots \vee B(\nu(n))}_{(c+1) \times})\\
		& \geq \xi(\underbrace{B(\nu(n)) \vee \ldots \vee B(\nu(n))}_{c \times}) +1 \geq \cdots\\
		& \geq \xi(B(\nu(n))) + c\\
		& \geq \xi(\nu(n)) +c\\
		& \geq \mathsf{st}(\nu(n)) + c,
		\end{align*}
		in contradiction to the assumption that $n - \mathsf{st}(\nu(n)) < c$.
	\end{proof}
	
	Since $n - \mathsf{st}(\underline{n}) < 1$ for all $n \in \omega$, we conclude from the above lemma:
	
	\begin{cor}
		There is no self-referential numbering for standard numerals which is monotonic.
	\end{cor}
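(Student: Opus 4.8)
The plan is to derive the corollary immediately from Lemma~\ref{lem:limitation:monotonicandSR}, specialised to the standard numeral function $\underline{\cdot}$. First I would check the hypothesis of that lemma for $\nu := \underline{\cdot}$. The standard numeral $\underline n = \mathsf S\cdots\mathsf S\zero$, with $n$ occurrences of $\mathsf S$, has as its proper sub-terms exactly the closed terms $\zero = \underline 0,\ \underline 1,\ \ldots,\ \underline{n-1}$, so $\mathsf{st}(\underline n) = n$ and hence $n - \mathsf{st}(\underline n) = 0 < 1$ for every $n \in \omega$. Thus the hypothesis holds with the constant $c := 1$, and Lemma~\ref{lem:limitation:monotonicandSR} yields that no numbering which is self-referential for $\underline{\cdot}$ can satisfy both \textnormal{M2} and \textnormal{M3}.

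It then remains only to observe that monotonicity is at least as strong as the conjunction of \textnormal{M2} and \textnormal{M3}: by the characterisation of monotonic numberings recorded above, any monotonic numbering $\xi$ of $\mathcal L$ satisfies \textnormal{M1}, \textnormal{M2} and \textnormal{M3}, in particular \textnormal{M2} and \textnormal{M3}. Consequently no monotonic numbering is self-referential for standard numerals, which is the assertion of the corollary. I do not expect a genuine obstacle here — the whole content is carried by Lemma~\ref{lem:limitation:monotonicandSR} — the only point requiring a moment's care is that the quantity $\mathsf{st}(\underline n)$ counts the proper sub-\emph{terms} of the term $\underline n$ itself, not sub-strings of some notation for it, so that the bound $n - \mathsf{st}(\underline n) < 1$ is exact rather than merely approximate.
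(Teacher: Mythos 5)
Your proof is correct and is exactly the paper's argument: the paper derives the corollary from Lemma~\ref{lem:limitation:monotonicandSR} by noting that $n - \mathsf{st}(\underline{n}) < 1$ for all $n$, i.e.\ taking $c = 1$ for the standard numeral function. Your additional observations --- that $\mathsf{st}(\underline n) = n$ exactly, and that monotonicity subsumes M2 and M3 --- are just the details the paper leaves implicit.
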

	
	After the second author's lecture on \emph{Cogwheels of Self-reference} at the workshop \emph{Ouroboros, Formal Criteria of Self-Reference in Mathematics and Philosophy} on February 17, 2018, Joel Hamkins asked whether the result on the non-monotonicity of self-referential G\"odel numberings also holds when we consider efficient numerals.
	
	We first note that Lemma~\ref{lem:limitation:monotonicandSR} does not apply to efficient numerals. We define:
	\begin{itemize}
		\item
		$\widetilde 0 := \zero$
		\item
		$\widetilde {n+1} := {\sf S}({\sf SS}\zero \times \widetilde n)$.
	\end{itemize}
	Let {\sf ev} be the evaluation function for arithmetical closed terms and let $|\alpha|$ denote the length of the string $\alpha$. We find:
	
	\begin{lemma}\label{lolligesmurf} For every $n \in \omega$
		\begin{itemize}
			\item ${\sf ev}(\widetilde n ) = 2^n -1$;
			\item $|\widetilde n| = 7n+1$;
			\item the number of subterms-qua-type of $\widetilde n$ is
			$\leq 2n+3$.
		\end{itemize}
	\end{lemma}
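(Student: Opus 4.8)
The plan is to prove all three clauses by induction on $n$, reading off what is needed directly from the recursion $\widetilde 0 = \zero$ and $\widetilde{n+1} = {\sf S}({\sf SS}\zero \times \widetilde n)$. For the value clause, the base case is ${\sf ev}(\widetilde 0) = {\sf ev}(\zero) = 0 = 2^0-1$. For the induction step I would use that ${\sf SS}\zero$ has value $2$, so that ${\sf ev}({\sf SS}\zero \times \widetilde n) = 2\cdot{\sf ev}(\widetilde n)$ and hence, by the induction hypothesis, ${\sf ev}(\widetilde{n+1}) = 2\cdot{\sf ev}(\widetilde n) + 1 = 2(2^n-1)+1 = 2^{n+1}-1$.

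For the length clause, I would inspect $\widetilde{n+1} = {\sf S}({\sf SS}\zero \times \widetilde n)$ as a string in infix notation: the occurrence of $\widetilde n$ is flanked on the left by the six symbols ${\sf S}, (, {\sf S}, {\sf S}, \zero, \times$ and on the right by the single symbol $)$. Hence $|\widetilde{n+1}| = |\widetilde n| + 7$, and since $|\widetilde 0| = |\zero| = 1$ this yields $|\widetilde n| = 7n+1$ by induction.

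For the subterm clause, let $S(n)$ denote the number of distinct subterms-qua-type of $\widetilde n$. Unwinding the definition, the subterms of $\widetilde{n+1}$ are $\widetilde{n+1}$ itself, the product $({\sf SS}\zero \times \widetilde n)$, the terms ${\sf SS}\zero$, ${\sf S}\zero$, $\zero$, and all subterms of $\widetilde n$. The point I would exploit is that already $\widetilde 1 = {\sf S}({\sf SS}\zero \times \zero)$ has $\zero$, ${\sf S}\zero$ and ${\sf SS}\zero$ among its subterms, so for $n \geq 1$ the \emph{only} subterms of $\widetilde{n+1}$ not already occurring in $\widetilde n$ are $\widetilde{n+1}$ and $({\sf SS}\zero \times \widetilde n)$ — and these two are distinct from one another and from everything in $\widetilde n$, as one sees by comparing leading symbols (or values). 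Thus $S(n+1) = S(n)+2$ for $n \geq 1$, while $S(0) = 1$ and $S(1) = 5$, giving $S(n) = 2n+3$ for $n \geq 1$ and $S(0) = 1 \leq 3$; in either case $S(n) \leq 2n+3$.

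The value and length clauses are entirely routine; the only step needing care is the third, where one must keep in mind that subterms are counted \emph{qua type}, observe that the small subterms $\zero, {\sf S}\zero, {\sf SS}\zero$ already appear in $\widetilde 1$ and therefore contribute nothing new from that point on, and check that the two genuinely new subterms arising at each step really are new. (Incidentally the bound is in fact an equality for $n \geq 1$, but only the stated inequality is needed in the sequel.)
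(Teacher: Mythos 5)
Your proof is correct. The paper states this lemma without proof, treating all three clauses as routine consequences of the recursion $\widetilde 0 = \zero$, $\widetilde{n+1} = {\sf S}({\sf SS}\zero \times \widetilde n)$; your induction is exactly the intended verification, and you rightly isolate the only point needing any care, namely that $\zero$, ${\sf S}\zero$ and ${\sf SS}\zero$ already occur among the subterms of $\widetilde 1$, so that from then on each stage contributes only the two new subterms $\widetilde{n+1}$ and $({\sf SS}\zero \times \widetilde n)$.
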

	
	From this lemma we conclude that the function $\lambda n.2^n -1$ grows exponentially while the number of sub-terms of $\overline{2^n -1}$ is not larger than $2n+3$ and thus only grows linearly. Hence, the assumption of Lemma~\ref{lem:limitation:monotonicandSR} cannot be satisfied for efficient numerals, since there is no constant $c$ such that $n - {\sf st}(\overline{n}) < c$ for all $n \in \omega$.
	
	Indeed, in what follows we show that the answer to Hamkins' question is \emph{no}, by constructing a monotonic numbering which is self-referential for efficient numerals.
	
	\section{A Monotonic Self-Referential Numbering}
	\label{section:construction2}
	Let $\mathcal L$ be an arithmetical language, as introduced in Subsection~\ref{subsection:language}, and
	let $\mathcal L(\mathsf{c})$ be $\mathcal L$ extended with a fresh constant $\mathsf{c}$. Let $A_0,A_1,\dots$ be an
	effective enumeration of all expressions of $\mathcal L(\mathsf{c})$. We assume that if $C \preceq A_n$, then, for some $k\leq n$, we have $C=A_k$.

	We define $\nusu A$ as the number of sub-expressions-qua-type of $A$, in other words, $\nusu A $ is the cardinality of
	$\verz{ B\mid B\preceq A}$. The following trivial observation is very useful:
	\begin{lemma}
		$\nusu {A_k} \leq k+1$.
	\end{lemma}
	
	\noindent Here is our construction.
	
	\[
	\fcolorbox{lightgray}{lightgray}{\begin{minipage}{33em}
		
		\medskip
		We construct a list $\Lambda :=(B_n)_{n\in \omega}$ in stages $k$. Let $\nub(k) := 2^{k+4}+1$.
		Each stage $k$ will result in a list $\Lambda_k = B_0,\dots, B_{\nub(k)-1}$.
		To simplify the presentation, we make $\Lambda_{-1}$ the empty list and $\nub(-1) := 0$.
		
		\medskip
		In stage $k$, we act as follows. Let $A_k^\ast := A_k [{\sf c} := {\sf S}\widetilde{(k+4)}]$.
		Let the sub-expressions of $A_k^\ast$ that do not occur in $\Lambda_{k-1}$ and are not of the form $\underline m$
		be $A_{i_0},\dots, A_{i_{\ell-1}}$,
		where the sequence $i_j$ is strictly increasing.
		We note that $\ell$ could be 0. We define $B_{\nub(k)-\ell +j} := A_{i_j}$, for $j <\ell $.
		Let $s$ be the smallest number such that $\underline s$ is not in  $\Lambda_{k-1}$.
		We set $B_{\nub(k-1)+p} := \underline{s+p}$, for $p< \nub(k)-\nub(k-1)-\ell$.
		\end{minipage}
	}
	\]
	
	\bigskip\noindent
	To see that our construction is well-defined it is sufficient that $\nub(k)-\ell \geq \nub(k-1)$ or, equivalently,
	$\ell \leq \nub(k)-\nub(k-1)$.
	We note that:
	\[
	\ell \leq \nusu {A_k^\ast} \leq \nusu{A_k}+\nusu{{\sf S}\widetilde {(k+4)}}-1  \leq k +1  + 2(k+4)+3 = 3 k + 12.
	\]
	(The $-1$ in the right-hand-side of the second inequality can be seen as follows. In case {\sf c} occurs in $A_k$ it is subtracted
	in the substitution. If {\sf c} does not occur, we have $A_k = A_k^\ast$ and, from this, the inequality follows immediately.)
	
	If $k=0$, we have $3\cdot 0  +12 = 12 < 17 = 2^{0+4}+1-0 = \nub(0) - \nub(-1)$.
	Let $k>0$. We find:
	\[3k+12  < 2^{k+3} = 2^{k+4}+1 - 2^{k+3}-1 = \nub(k)-\nub(k-1).\]

	\begin{lemma}\label{hippiesmurfA}
		Suppose $A_k \in \mathcal L$. Then, $A_k$ is in $\Lambda_k$ and, hence, in $\Lambda$.
	\end{lemma}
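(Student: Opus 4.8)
I would argue by cases on the shape of $A_k$, after two immediate reductions. First, since $A_k\in\mathcal L$ the constant $\mathsf c$ does not occur in $A_k$, so the substitution performed in stage $k$ is vacuous: $A_k^\ast = A_k$. In particular $A_k$ is itself a sub-expression of $A_k^\ast$. Second, stage $k$ writes only to the positions $\nub(k-1),\dots,\nub(k)-1$ (the lower block $\nub(k-1),\dots,\nub(k)-\ell-1$ and the upper block $\nub(k)-\ell,\dots,\nub(k)-1$ together exhaust exactly this range), so $\Lambda_{k-1}$ is an initial segment of $\Lambda_k$. Hence if $A_k$ already occurs in $\Lambda_{k-1}$ we are done, and we may assume it does not. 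It then remains to split according to whether $A_k$ has the form $\underline m$.

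\textbf{Non-numeral case.} Suppose $A_k$ is not a standard numeral. Then $A_k$ is a sub-expression of $A_k^\ast$ which does not occur in $\Lambda_{k-1}$ and is not of the form $\underline m$, so $A_k$ appears among the expressions $A_{i_0},\dots,A_{i_{\ell-1}}$ chosen at stage $k$. All sub-expressions of $A_k^\ast = A_k$ have index $\le k$ by the assumption on the enumeration, and $A_k$ itself sits at index $k$, the largest possible; since the $i_j$ are strictly increasing this forces $i_{\ell-1}=k$, so $A_k = A_{i_{\ell-1}} = B_{\nub(k)-1}$, which lies in $\Lambda_k$.

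\textbf{Numeral case.} Suppose $A_k = \underline m$. Its sub-expressions are exactly $\underline 0,\underline 1,\dots,\underline m$, which are $m+1$ distinct expressions, each occurring at some index $\le k$; hence $m+1\le k+1$, i.e.\ $m\le k$. Next, a short induction on the stage shows that the standard numerals occurring in $\Lambda_j$ form an initial segment $\{\underline 0,\dots,\underline{N_j-1}\}$: the upper block added at any stage contains, by construction, no numerals, whereas the lower block appends a run of consecutive numerals $\underline s,\underline{s+1},\dots$ beginning at the least $s$ with $\underline s\notin\Lambda_{k-1}$, which by the induction hypothesis equals $N_{k-1}$. Writing $c_j := \nub(j)-\nub(j-1)-\ell_j$ for the length of the lower block at stage $j$, this gives $N_k = \sum_{j=0}^k c_j$. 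Finally, the estimate already verified for well-definedness of the construction, namely $\ell_j \le 3j+12 < \nub(j)-\nub(j-1)$, yields $c_j\ge 1$ for every $j$, so $N_k \ge k+1 > m$. Therefore $\underline m = A_k$ occurs in $\Lambda_k$.

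\textbf{Main obstacle.} The non-numeral case and the two reductions are immediate from unwinding the construction; the only part needing genuine bookkeeping is the numeral case, where one must check both that numerals enter $\Lambda$ as an initial segment (so none is skipped) and that enough of them have been listed by stage $k$. Both points rest on the combinatorial inequality $\ell_j < \nub(j)-\nub(j-1)$ established just before the lemma, together with the trivial bound $\nusu{A_k}\le k+1$ and the elementary fact $\nusu{{\sf S}\widetilde{(k+4)}}\le 2(k+4)+3$ from Lemma~\ref{lolligesmurf}.
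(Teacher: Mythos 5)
Your proof is correct, and its overall shape (reduce to $A_k^\ast = A_k$, dispose of the case $A_k \in \Lambda_{k-1}$, then split on whether $A_k$ is a standard numeral) matches the paper's. The one place you genuinely diverge is the numeral case. The paper settles it locally at stage $k$: since every sub-expression of $\underline m$ is again a numeral, $\ell = 0$ there, and the bound $m+1 \leq k+1 < \nub(k)-\nub(k-1)$ shows the lower block of stage $k$ alone is long enough to append every missing $\underline{m'}$ with $m' \leq m$. You instead establish a cross-stage invariant — the numerals present in $\Lambda_j$ form an initial segment of length $N_j = \sum_{i\le j} c_i$ with $c_i \geq 1$ at every stage — and conclude $N_k \geq k+1 > m$. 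Both rest on the same inequality $\ell_j < \nub(j)-\nub(j-1)$ verified for well-definedness; your version does not need the observation that $\ell = 0$ when $A_k$ is a numeral and so is marginally more robust, at the cost of an extra induction (which, incidentally, reproves a fragment of what Lemma~\ref{neerslachtigesmurfA} later establishes). The additional claim in your non-numeral case that $i_{\ell-1} = k$ and hence $A_k = B_{\nub(k)-1}$ is correct but not needed for this lemma; it is essentially the observation exploited later in the proof of Lemma~\ref{lem:gn3isselfref}.
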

	
	\begin{proof}
		Consider stage $k$. We note that $A_k^\ast = A_k$. In case $A_k$ occurs in $\Lambda_{k-1}$, we are done.
		In case $A_k$ is not in $\Lambda_{k-1}$ and not of the form $\underline m$, clearly, $A_k$ will be added, so we are again done.
		Suppose $A_k = \underline m$, for some $m$. It follows that $m+1=\nusu{\underline m} \leq k+1$.
		We note that all sub-expressions of $A_k$ are all of the form $\underline {m}'$, for $m'\leq m$. So $\ell = 0$.
		Clearly, $m+1 \leq k+1 < \nub(k) - \nub(k-1)$. So, all sub-expressions of $A_k$ are either in $\Lambda_{k-1}$ or will be added.
	\end{proof}
	
	\begin{lemma}\label{neerslachtigesmurfA}
		Suppose $C \preceq B_n$. Then, for some $j\leq n$, we have $C = B_j$.
	\end{lemma}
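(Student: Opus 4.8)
The plan is to prove the statement by a case analysis on the way the entry $B_n$ gets placed during the construction, preceded by one auxiliary observation about the standard numerals occurring in $\Lambda$. No induction on $n$ is needed; the only induction is the (easy) stage-induction behind that observation.

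First I would establish the auxiliary observation: by induction on the stage $k$, the standard numerals occurring in $\Lambda_k$ are exactly $\underline 0,\dots,\underline{N_k-1}$ for some $N_k$, and the standard numerals are entered into $\Lambda$ in strictly increasing order of value. This is immediate from the construction, since at stage $k$ the numeral block starts at the least $s$ with $\underline s\notin\Lambda_{k-1}$ — which by the induction hypothesis is $N_{k-1}$ — and proceeds through consecutive values, while the remaining entries added at stage $k$ are not of the form $\underline m$. Consequently, if $\underline m$ sits at position $n'$ and $m'<m$, then $\underline{m'}$ sits at a position strictly below $n'$.

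Now I fix $n$ and $C\preceq B_n$. I note that the numeral block $B_{\nub(k-1)},\dots,B_{\nub(k)-\ell-1}$ and the sub-expression block $B_{\nub(k)-\ell},\dots,B_{\nub(k)-1}$ (the latter filled with the new non-numeral sub-expressions $A_{i_0},\dots,A_{i_{\ell-1}}$ of $A_k^\ast$) partition $\{\nub(k-1),\dots,\nub(k)-1\}$, so every entry of $\Lambda$ is either a standard numeral or such a new sub-expression. If $B_n=\underline m$, then $C=\underline{m'}$ with $m'\leq m$, and the auxiliary observation puts $\underline{m'}$ at a position $\leq n$. If instead $B_n=A_{i_j}$, entered at stage $k$ so that $n=\nub(k)-\ell+j$, then $C\preceq A_{i_j}\preceq A_k^\ast$ gives $C\preceq A_k^\ast$, and I split into three sub-cases: (a) $C$ already occurs in $\Lambda_{k-1}$, so $C=B_{j'}$ with $j'<\nub(k-1)\leq n$ (using $n\geq\nub(k)-\ell\geq\nub(k-1)$ from well-definedness); (b) $C=\underline m$ is a numeral not in $\Lambda_{k-1}$, so $C$ is placed in the numeral block of stage $k$, hence at a position $<\nub(k)-\ell\leq n$; (c) $C$ is a non-numeral not in $\Lambda_{k-1}$, hence $C$ is itself one of the new non-numeral sub-expressions of $A_k^\ast$, say $C=A_{i_{j'}}$, and since $C\preceq A_{i_j}$ the standing assumption that sub-expressions precede their parents in the enumeration $A_0,A_1,\dots$ (taking the enumeration injective, which is harmless) gives $i_{j'}\leq i_j$, whence $j'\leq j$ because $(i_t)_t$ is strictly increasing, so $C=B_{\nub(k)-\ell+j'}$ with position $\leq n$.

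I expect sub-case (c) to be where the real work sits: it is exactly here that the hypothesis ``$C\preceq A_m$ implies $C=A_{k'}$ for some $k'\leq m$'' does its job, guaranteeing that a sub-expression of $A_{i_j}$ was already listed, at the same stage, at a position no later than that of $A_{i_j}$. Sub-case (b) hides one small computation I would have to discharge: every standard numeral occurring as a sub-expression of $A_k^\ast$ must already lie in $\Lambda_k$. This holds because those numerals form an initial segment $\underline 0,\dots,\underline{M_k}$ with $M_k$ small — the numeral sub-expressions of ${\sf S}\widetilde{(k+4)}$ are only $\underline 0,\underline 1,\underline 2$, while $\nusu{A_k}\leq k+1$ bounds the numeral sub-expressions of $A_k$ by $\underline k$ — and because an estimate of exactly the flavour already used for well-definedness (the inequality $\ell\leq 3k+12<\nub(k)-\nub(k-1)$, summed over stages) yields $N_k>M_k$. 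Sub-case (a) and the numeral case for $B_n$ are then pure bookkeeping.
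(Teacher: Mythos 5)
Your proof is correct and follows essentially the same route as the paper's: a case split on whether $B_n$ is a numeral filler or a newly added sub-expression of $A_k^\ast$, using the fact that the fillers $\underline m$ enter $\Lambda$ as an increasing initial segment and that the indices $i_j$ are listed in increasing order. The only divergence is in the sub-case where $C=\underline m$ sits below a non-numeral $B_n$: the paper argues via Lemma~\ref{hippiesmurfA} that $\underline m$ already lies in $\Lambda_{k-1}$, whereas you allow it to be absorbed into the stage-$k$ filler block and discharge the counting estimate $N_k>M_k$; both work.
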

	
	\begin{proof}
		Let $C \preceq B_n$.
		Suppose $B_n$ is added in stage $k$. Then, $B_n$ is either of the form $\underline m$ or a sub-expression 
		of $A^\ast_k$ not of the form $\underline m$.
		
		Suppose $B_n = \underline m$.
		Let $s$ be the smallest number such that $\underline s$ is not in  $\Lambda_{k-1}$.
		For all $s'< s$, we have $\underline s'\in \Lambda_{k-1}$. By our construction, all $\underline s''$, for
		$s\leq s''\leq m$ are added at stage $k$ in ascending order. So, for all $s^\circ \leq m$, we find that $\underline s^\circ$ precedes
		$\underline m$ in $ \Lambda_k$. Finally, each sub-expression $C$ of $\underline m$ is of the form $\underline s^\circ$, for
		some $s^\circ \leq m$.
		
		Suppose $B_n$ is added as a sub-expression of $A^\ast_k$ not of the form $\underline m$. 
		First suppose $C= \underline p$.
		We note that $p+1 = \nusu{\underline p} < \nusu{A_k} \leq k+1$. This tells us, by Lemma~\ref{hippiesmurfA}, that $\underline p$ is in $\Lambda_{k-1}$.
		Hence, it is added to $\Lambda$ before $B_n$.  Now suppose that $C$ is not of the form $\underline p$. 
		Then either $C$ is in $\Lambda_{k-1}$ or added to $\Lambda_k$ before $B_n$.
	\end{proof}
	
	\begin{lemma}
		The enumeration $\Lambda$ is without repetitions.
	\end{lemma}
	
	\begin{proof}
		Consider any $C \in \mathcal L$. If $C$ is not of the form $\underline m$ by our construction it will be only added once.
		
		Suppose $ C = \underline m$. We note that, by Lemma~\ref{neerslachtigesmurfA}, the $n$ such that $\underline n \in \Lambda_{k-1}$
		are downwards closed. This means that, in our construction, there is no $m\geq s$, such that $\underline m \in \Lambda_{k-1}$.
		So all the $\underline m$ that are added in any stage $k$ are new.
	\end{proof}
	
	\begin{lemma}\label{geestigesmurfA}
		Suppose ${\sf S}\widetilde n$ occurs in $\Lambda_k$, then $n \leq k+4$.
	\end{lemma}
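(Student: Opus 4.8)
The plan is to trace how an expression of the form ${\sf S}\widetilde n$ can enter the list $\Lambda$, and to read off a bound on $n$ from that. First I dispose of the case $n = 0$, where the conclusion $n \le k+4$ is immediate, so I may assume $n \ge 1$. For $n \ge 1$ we have $\widetilde n = {\sf S}({\sf SS}\zero \times \widetilde{(n-1)})$, so ${\sf S}\widetilde n$ contains the symbol $\times$ and is therefore not a standard numeral $\underline m$. Consequently, if ${\sf S}\widetilde n$ occurs in $\Lambda_k$, it was not inserted as one of the numerals $\underline{s+p}$ added at some stage, but as one of the $A_{i_p}$'s; that is, it is a sub-expression of $A_j^\ast = A_j[{\sf c} := {\sf S}\widetilde{(j+4)}]$ for the stage $j$ at which it was added, and $j \le k$ because $\nub$ is strictly increasing. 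So it remains to show: if ${\sf S}\widetilde n \preceq A_j^\ast$ with $n \ge 1$, then $n \le j+4$.

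For this I would use the standard substitution bookkeeping for $A_j^\ast = A_j[{\sf c} := {\sf S}\widetilde{(j+4)}]$: every sub-expression of $A_j^\ast$ is either a sub-expression of the inserted term ${\sf S}\widetilde{(j+4)}$, or of the form $E[{\sf c} := {\sf S}\widetilde{(j+4)}]$ for some sub-expression $E$ of $A_j$; and in the latter case, if ${\sf c}$ actually occurs in $E$, then the sub-expression in question contains a full copy of ${\sf S}\widetilde{(j+4)}$ (sitting where a ${\sf c}$ was). Applying this to the ${\sf c}$-free expression $G := {\sf S}\widetilde n$ leaves three alternatives: (a) ${\sf S}\widetilde n \preceq A_j$; (b) ${\sf S}\widetilde n \preceq {\sf S}\widetilde{(j+4)}$; (c) ${\sf S}\widetilde{(j+4)} \preceq {\sf S}\widetilde n$.

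To handle (b) and (c) I would first record the auxiliary structural fact that, for every $p$, the only sub-expressions of ${\sf S}\widetilde p$ of the form ${\sf S}\widetilde q$ are ${\sf S}\widetilde p$ itself and ${\sf S}\widetilde 0 \,(= {\sf S}\zero)$; this is a short induction on $p$ using that each $\widetilde q$ is either $\zero$ or headed by ${\sf S}$, never by $\times$. Since $n \ge 1$ and $j+4 \ge 1$, this rules out the ${\sf S}\widetilde 0$ option in both (b) and (c), so each of them forces ${\sf S}\widetilde n = {\sf S}\widetilde{(j+4)}$, hence $\widetilde n = \widetilde{(j+4)}$, hence $n = j+4 \le k+4$ by injectivity of $\widetilde{\cdot}$ (which follows from ${\sf ev}(\widetilde p) = 2^p - 1$, Lemma~\ref{lolligesmurf}). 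For (a), transitivity of $\preceq$ gives that every sub-expression of ${\sf S}\widetilde n$ is a sub-expression of $A_j$, so $\nusu{{\sf S}\widetilde n} \le \nusu{A_j} \le j+1$ by the observation that $\nusu{A_k} \le k+1$; on the other hand $\widetilde 0, \widetilde 1, \dots, \widetilde n$ are $n+1$ pairwise distinct sub-expressions of ${\sf S}\widetilde n$ (distinctness again from ${\sf ev}(\widetilde p) = 2^p - 1$), so $n+1 \le \nusu{{\sf S}\widetilde n} \le j+1$, i.e.\ $n \le j \le k \le k+4$. In all three cases $n \le k+4$.

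The one step that requires care — and the only genuine obstacle — is the substitution bookkeeping in the second paragraph: making precise, ideally via a brief occurrence/subtree argument, that a sub-expression of $A_j[{\sf c} := {\sf S}\widetilde{(j+4)}]$ which is neither a sub-expression of $A_j$ nor contained in an inserted copy of ${\sf S}\widetilde{(j+4)}$ must itself contain an inserted copy of ${\sf S}\widetilde{(j+4)}$. Everything else is a routine unravelling of the definitions of $\widetilde{\cdot}$, of the numerals, and of the construction.
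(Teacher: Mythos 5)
Your proof is correct and follows essentially the same route as the paper's: the paper argues by induction on $k$, which comes down to the same ``consider the stage at which ${\sf S}\widetilde n$ was added'' analysis you perform directly, and its two cases (${\sf S}\widetilde n \preceq A_k$ versus ``$n=k+4$, since the sub-term results from the substitution for ${\sf c}$'') are exactly your case (a) versus your cases (b)/(c). Your explicit substitution bookkeeping, together with the auxiliary fact that the only sub-expressions of ${\sf S}\widetilde p$ of the form ${\sf S}\widetilde q$ are ${\sf S}\widetilde p$ itself and ${\sf S}\widetilde 0$, merely spells out what the paper leaves implicit in its second case.
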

	
	\begin{proof}
		We prove this by induction on $k$. In stage 0, we easily verify that the largest term of the form $\mathsf{S}\widetilde n$ can be at most
		$\mathsf{S} \widetilde 4$.
		
		Suppose, we have our desired estimate for $k-1$, where $k>0$. We prove our estimate for $k$.
		Clearly, the $\mathsf{S}\widetilde n$ occurring in $\Lambda_{k-1}$ do satisfy our estimate.
		Moreover, all the $\underline p$ added in stage $k$ do not provide new elements of the form $\mathsf{S}\widetilde n$.
		So, the only interesting case is the case where $A_k$ is not a standard numeral. Suppose it is not.
		The ${\sf S}\widetilde n$ that are added are all sub-expressions of $ A^\ast_k= A_k[{\sf c} := {\sf S}\widetilde{(k+4)}]$. 
		So, we should focus on the largest
		such sub-term.
		Suppose ${\sf S}\widetilde n \preceq A^\ast_k$.
		There are two possibilities:
		\begin{enumerate}[i.]
			\item ${\sf S}\widetilde n\preceq A_k$. In this case, we have 
			$n < \nusu{\mathsf{S}\widetilde n} \leq \nusu{A_k} \leq k+1 \leq k +4$.
			\item
			$n= k+4$, since the sub-term results from the substitution for {\sf c}.
		\end{enumerate}
		In each of the possible cases, we are done.
	\end{proof}
	
	\begin{lemma}
		\label{lem:gn3isselfref}
		Consider any $A$ such that {\sf c} occurs in $A$. Then, for some $n$, $B_n = A[{\sf c}:= \overline n]$.
	\end{lemma}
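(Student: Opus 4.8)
The plan is as follows. Given $A$ in which ${\sf c}$ occurs, fix an index $k$ with $A = A_k$ (such $k$ exists since $A$ is an expression of $\mathcal L({\sf c})$), and show that stage $k$ of the construction inserts $A_k^\ast := A_k[{\sf c}:={\sf S}\widetilde{(k+4)}]$ into $\Lambda$ at exactly position $\nub(k)-1$. Since, as we will also check, ${\sf S}\widetilde{(k+4)}$ \emph{is} the efficient numeral $\overline{\nub(k)-1}$, this gives the lemma with $n := \nub(k)-1 = 2^{k+4}$.

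First I would settle the numeral identity. Because $2^m-1$ is odd for $m\geq 1$ with $(2^m-2)/2 = 2^{m-1}-1$, the defining clauses of $\overline{\cdot}$ give $\overline{2^m-1} = {\sf S}({\sf SS}\zero \times \overline{2^{m-1}-1})$, so a one-line induction yields $\widetilde m = \overline{2^m-1}$ for all $m$ (both base cases being $\zero$). Now $2^{k+4}$ is even with $(2^{k+4}-2)/2 = 2^{k+3}-1$, hence $\overline{2^{k+4}} = {\sf SS}({\sf SS}\zero \times \overline{2^{k+3}-1}) = {\sf SS}({\sf SS}\zero \times \widetilde{(k+3)})$; and $\widetilde{(k+4)} = {\sf S}({\sf SS}\zero \times \widetilde{(k+3)})$, so ${\sf S}\widetilde{(k+4)} = {\sf SS}({\sf SS}\zero \times \widetilde{(k+3)})$ too. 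As $\nub(k)-1 = 2^{k+4}$, this is the claimed ${\sf S}\widetilde{(k+4)} = \overline{\nub(k)-1}$.

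Next I would show $B_{\nub(k)-1} = A_k^\ast$. Since ${\sf c}$ occurs in $A_k$, the term ${\sf S}\widetilde{(k+4)}$ occurs in $A_k^\ast$, so $A_k^\ast$ is not of the form $\underline m$. Also, $A_k^\ast$ does not occur in $\Lambda_{k-1}$: otherwise, as $\Lambda_{k-1}$ is closed under sub-expressions (Lemma~\ref{neerslachtigesmurfA}), the term ${\sf S}\widetilde{(k+4)}$ would occur in $\Lambda_{k-1}$, contradicting Lemma~\ref{geestigesmurfA} (which would force $k+4\leq(k-1)+4$). Hence $A_k^\ast$ is among the new, non-numeral sub-expressions $A_{i_0},\dots,A_{i_{\ell-1}}$ added at stage $k$, so $\ell\geq 1$; and since every proper sub-expression of $A_k^\ast$ has strictly smaller index than $A_k^\ast$ in the enumeration $(A_j)_j$ (which we may take repetition-free), $A_k^\ast$ is the one with the largest index, i.e.\ $A_{i_{\ell-1}} = A_k^\ast$. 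By construction, $B_{\nub(k)-1} = B_{\nub(k)-\ell+(\ell-1)} = A_{i_{\ell-1}} = A_k^\ast$.

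Putting the two steps together, for $n := \nub(k)-1$ we obtain $B_n = A_k^\ast = A_k[{\sf c}:={\sf S}\widetilde{(k+4)}] = A_k[{\sf c}:=\overline n] = A[{\sf c}:=\overline n]$, as required. The step I expect to be the main obstacle in a fully rigorous write-up is the second one: pinning down that $A_k^\ast$ is precisely the \emph{last} of the sub-expressions freshly added at stage $k$, and hence lands at index $\nub(k)-1$ — this is where sub-expression-closure of $\Lambda_{k-1}$, Lemma~\ref{geestigesmurfA}, and the largest-index observation are all needed. The first step is routine, but one should be mindful that what is needed there is a genuinely \emph{syntactic} identity of terms, not just an equality of their values.
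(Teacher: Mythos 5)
Your proof is correct and follows essentially the same route as the paper's: identify $A$ as $A_k$, use Lemma~\ref{geestigesmurfA} (together with the sub-expression closure of $\Lambda_{k-1}$) to show $A_k^\ast\notin\Lambda_{k-1}$, locate $A_k^\ast$ at position $\nub(k)-1$, and observe that ${\sf S}\widetilde{(k+4)}=\overline{\nub(k)-1}$. You merely make explicit two steps the paper compresses — the syntactic verification of the numeral identity and the largest-index argument placing $A_k^\ast$ last among the freshly added sub-expressions — both of which check out.
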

	
	\begin{proof}
		Suppose {\sf c} occurs in $A$. Let $A := A_k$. The only thing we have to show that $A^\ast_k$ is not in $\Lambda_{k-1}$.
		This is certainly true for $k=0$.
		Suppose $k>0$. We note that ${\sf S}\widetilde {(k+4)}$ occurs in $A^\ast_k$. So, it is sufficient to show that
		${\sf S}\widetilde {(k+4)}$ does not occur in $\Lambda_{k-1}$.
		By Lemma~\ref{geestigesmurfA}, whenever $\mathsf{S}\widetilde n$ occurs in $\Lambda_{k-1}$, we have
		$n \leq k+3$. We may conclude that
		$B_{\nub(k)-1} = A_k[\mathsf{c} := \mathsf{S}(\widetilde{k+4})]$. 
		Let $n := \nub(k)-1$. Since $\mathsf{S}(\widetilde{k+4}) = \overline{\nub(k)-1}$, we are done.
	\end{proof}
	
	Setting ${\sf gn}_0(A)$ to be the unique $n$ such that $A = B_n$, we have found a monotonic self-referential G\"odel numbering ${\sf gn}_0$,
	where monotonicity is defined with respect to the sub-expression relation.
	
	\section{Strings}
	\label{section:construction2B}
	Let $\mathcal L$ be an arithmetical language, as introduced in 
	Subsection~\ref{subsection:language}, and let $\mathcal A$ be the alphabet of $\mathcal L$. Let $\mathsf{c}$ be a fresh letter.
	We define:
	\begin{itemize}
		\item
		$\mathcal A^\ast$ is the set of all strings of $\mathcal A$, including the empty string $\varepsilon$. 
		\item
		$\mathcal A_{\sf c}$ is $\mathcal A$ extended with {\sf c}
		and $\mathcal A_{\sf c}^\ast$ is the set of all strings of $\mathcal A_{\sf c}$.\\
		(In $\mathcal L({\sf c})$ viewed as a subset of $\mathcal A_{\sf c}^\ast$, we treat {\sf c} as a constant-symbol.)
		\item
		$\alpha \sqsubseteq \beta$ iff $\alpha$ is a sub-string of $\beta$.
		\item
		$|\alpha|$ is the length of $\alpha$.
		\item
		$\nusus \alpha$ is the number of sub-strings-qua-type of $\alpha$, in other words, $\nusus \alpha $ is the cardinality of
		$\verz{ \beta\mid \beta\sqsubseteq \alpha}$. 
	\end{itemize}
	We have the following well-known fact.
	
	\begin{lemma}
		Suppose $|\alpha| = n$. Then, the number of non-empty-sub-string-oc\-cur\-ren\-ces in $\alpha$ is $\frac{n(n+1)}{2}$.
		As a consequence, $\nusus \alpha \leq \frac{n^2+n+2}{2}$.
	\end{lemma}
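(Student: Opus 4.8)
The plan is to count occurrences of non-empty sub-strings directly, and then pass to sub-strings-qua-type by a crude over-count.

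First I would fix the string $\alpha$ with $|\alpha| = n$ and identify a sub-string \emph{occurrence} with a pair of positions: a starting index $i$ and an ending index $j$ with $1 \leq i \leq j \leq n$, the occurrence being the block of letters of $\alpha$ from position $i$ through position $j$. Every non-empty sub-string occurrence arises from exactly one such pair, and conversely. So it suffices to count the pairs $(i,j)$ with $1 \leq i \leq j \leq n$. The cleanest way is to group by length $\ell := j - i + 1$, which ranges over $1,\dots,n$; for a fixed length $\ell$ there are exactly $n - \ell + 1$ admissible starting positions. Hence the number of non-empty sub-string occurrences is
\[
\sum_{\ell=1}^{n} (n - \ell + 1) \;=\; \sum_{m=1}^{n} m \;=\; \frac{n(n+1)}{2},
\]
which is the first claim. (Alternatively one notes that the pairs $(i,j)$ with $1\leq i\leq j\leq n$ number $\binom{n}{2}+n = \frac{n(n+1)}{2}$.)

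For the consequence, I would observe that $\verz{\beta \mid \beta \sqsubseteq \alpha}$ consists of the empty string $\varepsilon$ together with the non-empty sub-strings of $\alpha$. Each non-empty sub-string-qua-type occurs at least once in $\alpha$, so the map sending an occurrence to its underlying string-qua-type is a surjection from the set of non-empty occurrences onto the set of non-empty sub-strings-qua-type. Therefore the number of non-empty sub-strings-qua-type is at most $\frac{n(n+1)}{2}$, and adding $1$ for $\varepsilon$ gives
\[
\nusus{\alpha} \;\leq\; \frac{n(n+1)}{2} + 1 \;=\; \frac{n^2 + n + 2}{2},
\]
as required. There is no real obstacle here; the only point worth stating carefully is the bookkeeping of the empty string in the final bound, since $\nusus{\cdot}$ counts it whereas the occurrence count does not.
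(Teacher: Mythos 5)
Your proof is correct and takes essentially the same approach as the paper: the paper counts occurrences as unordered pairs of two of the $n+1$ inter-letter spaces, giving $\binom{n+1}{2}$, which is the same count as your pairs $(i,j)$ with $1\leq i\leq j\leq n$. Your explicit handling of the empty string in the passage to $\nusus{\alpha}$ is a detail the paper leaves implicit, and it is done correctly.
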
 
	
	\begin{proof}
		Suppose $|\alpha| = n$. Let us number the spaces before and after the letter-occurrences in $\alpha$: $1,\dots,n+1$.
		Each non-empty-sub-string-occurrence corresponds uniquely to the pair of the space before and the space after the occurrence.
		So, the number of such occurrences is $\binom{n+1}{2} =  \frac{n(n+1)}{2}$.
	\end{proof}
	
	\noindent
	Since we are not striving for maximal efficiency, we will estimate $\nusus \alpha$ by $|\alpha|^2+1$. 
	Let $\mathfrak a := {\sf S}({\sf SS}\zero\times\noshow{)}$. Then, ${\sf S}\widetilde n = {\sf S}\mathfrak a^n\zero\noshow{(})^n$.
	So, $| {\sf S}\widetilde n | = 1+6n +1+n = 7n+2$ and, thus, $\nusus{{\sf S}\widetilde n} \leq (7n+2)^2+1 = 49 n^2+ 28 n +5$.
	
	\begin{remark}
		\label{remark:polish}
		We note that a much better estimate of  $\nusus{{\sf S}\widetilde n}$ is possible but that its growth rate will still be quadratic.
		In case we switch to Polish notation, this becomes linear.  Such notational choices like infix versus Polish are irrelevant in
		our construction and its verification when we are considering sub-expressions. In the sub-string format, they become active. 
		
		\medent
		{\footnotesize We provide the estimate in the Polish case. For the rest of this
			remark we work with Polish notation. As in Section~\ref{subsection:language}, we write {\sf M} for multiplication.
			
			Let $\mathfrak b := {\sf SMSS}\zero$. We have ${\sf S}\widetilde n ={\sf S} \mathfrak b^n \zero$. We note that, unlike $\mathfrak a$,
			the string $\mathfrak b$ stands for a meaningful entity, to wit $\lambda k\in \omega. 2k+1$. See \citep{viss:ambi11}. 
			We count $\nusus{{\sf S}\widetilde n}$ as follows. Clearly $\nusus{{\sf S}\widetilde 0} = \nusus{{\sf S} \zero} = 4$. Suppose $n>0$. We have: 
			\begin{itemize}
				\item
				The empty string: 1.
				\item
				The number of non-empty sub-strings of $\mathfrak b$: $\frac{5\cdot 6}{2} -2 = 13$. (We subtract 2, since {\sf S} occurs three times.)
				\item
				All strings of the form $\alpha \mathfrak b^i \beta$, where $i< n-1$, $\alpha$ is a non-empty final string of $\mathfrak b$ and $\beta$ is
				a non-empty initial string of $\mathfrak b$: $25(n-1)$.
				\item
				All sub-strings that contain the the final occurrence of $\zero$, except $\zero$ itself: $5n+1$. (We note that all these strings do not occur elsewhere, since they end with $\zero \zero$.)
				\item
				All sub-strings that contain the initial occurrence of {\sf S}, except  ${\sf S}\widetilde n$ itself, 
				{\sf S} and {\sf SS}: $5n-1$. (We note that the strings considered are unique
				since they start with {\sf SSM}. Only  ${\sf S}\widetilde n$ is an exception, since we already counted it in the previous case.)
			\end{itemize}
			So, \emph{in toto}, we have:  $\nusus{{\sf S}\widetilde n} = 1+13+25(n-1)+5n+1+5n-1 = 35n-11$.
			A similar estimate is also possible in the infix case, of course. However, the closing brackets will force the number to be quadratic in $n$.}
	\end{remark}
	
	We will use the following  insights.
	
	\begin{lemma}\label{grumpysmurf}
		Suppose  $\alpha \in\mathcal A^\ast_{\sf c}$.
		Suppose $m\neq 0$ and $n \neq 0$. Consider a string $\alpha \in\mathcal A^\ast_{\sf c}$ and suppose $\alpha$ contains overlapping occurrences $o$ of ${\sf S}\widetilde n$ and $p$ of ${\sf S} \widetilde{m}$. Then, $o=p$.
	\end{lemma}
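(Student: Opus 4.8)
The plan is to analyze the internal structure of strings of the form ${\sf S}\widetilde n = {\sf S}\mathfrak a^n\zero\noshow{)}^n$, where $\mathfrak a = {\sf S}({\sf SS}\zero\times\noshow{)}$, and to exploit the fact that such strings have a rigid, essentially self-similar shape built from the single "period" block $\mathfrak a$. The key observation is that ${\sf S}\widetilde n$ has exactly one opening symbol ${\sf S}$ at the very front that is immediately followed by ${\sf SS}\zero\times$ (i.e., the start of an $\mathfrak a$-block), and exactly one $\zero$ followed only by closing brackets at the tail. So the leftmost symbol of any occurrence of ${\sf S}\widetilde n$ inside $\alpha$ is pinned down by local data, and likewise the rightmost. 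Two occurrences that overlap must therefore agree on these anchor positions, forcing them to be literally the same occurrence.

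First I would set up notation: write an occurrence of a pattern $\sigma$ in $\alpha$ as a pair $(\text{start},\text{end})$ of positions, and say $o$ and $p$ overlap if the corresponding intervals in $\{1,\dots,|\alpha|\}$ have nonempty intersection. Since $m,n\neq 0$, both $\sigma_n := {\sf S}\widetilde n$ and $\sigma_m := {\sf S}\widetilde m$ begin with the prefix ${\sf S}{\sf S}({\sf SS}\zero\times$ — more precisely, the first few letters are forced, and crucially the second letter is ${\sf S}$ and it is the beginning of the $\mathfrak a$-pattern rather than the tally ${\sf S}$ of $\widetilde 0$. I would then argue that within ${\sf S}\widetilde n$ there is a \emph{unique} position at which the two-letter window ${\sf S}{\sf S}$ is immediately followed by ${\sf S}\zero\times$ (or whatever the minimal distinguishing context is) — namely, the very front — and a unique position at which $\zero$ is followed by a bracket, namely the unique tally-$\zero$ near the tail. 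These uniqueness facts are what make ${\sf S}\widetilde n$ an \enquote{unbordered-enough} word for the overlap argument to go through.

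The core of the argument is then: suppose $o$ is an occurrence of $\sigma_n$ and $p$ an occurrence of $\sigma_m$, and they overlap. Both contain the distinguished front-anchor substring, and inside $\sigma_n$ (resp.\ $\sigma_m$) that anchor occurs at only one place, so $o$ and $p$ each determine a unique position in $\alpha$ where that anchor sits. If $o\neq p$ as occurrences, then (after possibly swapping roles) the start of one lies strictly inside the other; but then the front-anchor of the inner occurrence would appear at an \emph{interior} position of the outer occurrence's copy of ${\sf S}\widetilde{(\cdot)}$, contradicting the uniqueness of the anchor inside ${\sf S}\widetilde{(\cdot)}$. (One has to be slightly careful about the boundary effect at the tail, where the closing brackets of $\alpha$ that are not part of either occurrence could conceivably create a spurious anchor — this is where I'd use that inside $\mathcal A^\ast_{\sf c}$ the letters available are fixed and the anchor pattern ${\sf SS}\zero\times$ cannot be manufactured by concatenating the end of one block with the brackets of $\alpha$.) Hence $o=p$.

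The main obstacle I expect is the bookkeeping at the two ends: making the notion of "anchor" precise enough that (a) it genuinely occurs exactly once in ${\sf S}\widetilde n$ for every $n\geq 1$, including small $n$ like $n=1$ where $\widetilde 1 = {\sf S}({\sf SS}\zero\times\zero)$ is short, and (b) it cannot arise accidentally from the surrounding string $\alpha$ at the seam between an occurrence and the brackets to its right. A clean way to handle (b) is to phrase the whole lemma in terms of the front anchor only, located at the \emph{left} end of each occurrence: the left anchor of $\sigma_n$ is an honest substring entirely contained in $\sigma_n$, so no seam issue arises, and the uniqueness-inside-${\sf S}\widetilde n$ claim reduces to the combinatorial fact that the period-$\mathfrak a$ structure of $\widetilde n$ has no nontrivial self-overlap of the relevant window. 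I would prove that sub-claim by inspecting the letters of $\mathfrak a$ directly (it has length $6$ and the symbol $\times$ occurs exactly once in it, which already rigidly aligns any two overlapping $\mathfrak a$-blocks), and then lift it to ${\sf S}\widetilde n$ by noting the leading ${\sf S}$ only strengthens rigidity. With the anchors aligned, equality of the two occurrences follows, and since $\sigma_n$ and $\sigma_m$ then start at the same place, reading off from that position shows they must in fact be the same string and the same occurrence.
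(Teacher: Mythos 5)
Your proposal is correct and is essentially the paper's own argument: the paper takes the front anchor to be the three-letter prefix ${\sf SS}($, notes that $o$ cannot end in ${\sf S}$ or ${\sf SS}$ (so the anchor of the later-starting occurrence lies wholly inside the earlier one), invokes the uniqueness of ${\sf SS}($ inside ${\sf S}\widetilde n$ to align the two starting positions, and then finishes by bracket balance where you use prefix comparison. The one detail to fix is your candidate anchor (${\sf SS}$ followed by ${\sf S}\zero\times$ does not match the actual prefix ${\sf SS}({\sf SS}\zero\times\cdots$, and ${\sf SS}\zero\times$ alone recurs in every $\mathfrak a$-block so is not unique); with ${\sf SS}($ as the anchor your outline goes through verbatim.
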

	
	\begin{proof}
		Without loss of generality we may assume that $p$ starts at the same place as or at a later place than $o$. Since $o$ does not end with either {\sf S}, or {\sf SS}, the initial string 
		${\sf SS}(\noshow{)}$ of $p$ must lie in $o$. But the only sub-string of $o$ of the form 
		${\sf SS}(\noshow{)}$ is the intial one. Hence $o$ and $p$ start at the same place.
		By balance considerations, the matching closing brackets of the initial
		${\sf SS}(\noshow{)}$ in $o$ and $p$ should coincide. So, $o=p$  
	\end{proof} 
	
	\begin{lemma}
		Suppose $\alpha\in \mathcal A^\ast_{\sf c}$ and $\beta\in \mathcal A^\ast$, where {\sf c} occurs in $\alpha$ and $\beta$ is non-empty.
		Let $\nusus{\alpha} = p$, $\nusus{\beta}=q$ and $|\beta| = r$. Then,
		$\nusus{\alpha[{\sf c}:= \beta]} \leq (p-1)r^2+q$. 
		Since $q\leq r^2+1$, it follows that $\nusus{\alpha[{\sf c}:= \beta]} \leq pr^2+1$. 
	\end{lemma}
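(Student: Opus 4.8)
The claim is that if $\alpha \in \mathcal A^\ast_{\sf c}$ contains (at least) one occurrence of ${\sf c}$, and $\beta \in \mathcal A^\ast$ is a non-empty string with $\nusus\alpha = p$, $\nusus\beta = q$, $|\beta| = r$, then $\nusus{\alpha[{\sf c}:=\beta]} \le (p-1)r^2 + q$. The plan is to classify the sub-string occurrences of $\gamma := \alpha[{\sf c}:=\beta]$ according to how they sit relative to the (finitely many) copies of $\beta$ that were inserted. Write $\alpha = \alpha_0 {\sf c}\, \alpha_1 {\sf c}\, \cdots {\sf c}\, \alpha_j$ so that $\gamma = \alpha_0 \beta \alpha_1 \beta \cdots \beta \alpha_j$. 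Every sub-string occurrence of $\gamma$ either (a) lies entirely inside one inserted copy of $\beta$, or (b) meets the "skeleton", i.e.\ it either contains a letter of some $\alpha_i$ or spans across a copy of $\beta$ (touching material on both sides, or running from one $\alpha_i$ through one or more $\beta$'s into another $\alpha_{i'}$).

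**Counting the two types.**
For type (a): a sub-string occurrence contained in a single copy of $\beta$ contributes a string that is a sub-string of $\beta$, and there are at most $\nusus\beta = q$ distinct such strings. That is the "$+q$" term. For type (b): I want to bound the number of \emph{distinct} strings arising this way by $(p-1)r^2$. The key idea is that each such occurrence is determined by (i) which sub-expression-occurrence of $\alpha$ it "comes from" — more precisely, contract each maximal run of $\beta$ back to a single ${\sf c}$; the type-(b) occurrence projects to a sub-string occurrence of $\alpha$ that is \emph{not} the single-letter string ${\sf c}$ sitting strictly inside a run (it must touch some $\alpha_i$ letter or be a ${\sf c}$ that we then "expand"), giving at most $p-1$ choices once we note the lone-${\sf c}$ sub-string of $\alpha$ is the one type not producing a fresh string here; and (ii) how far into the leftmost inserted $\beta$ the occurrence starts and how far into the rightmost inserted $\beta$ it ends — each a choice among at most $r$ positions (a suffix-start of $\beta$, resp.\ a prefix-end of $\beta$, since interior copies of $\beta$ are used in full). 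So type (b) yields at most $(p-1) \cdot r \cdot r = (p-1)r^2$ distinct strings.

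**The main obstacle.**
The delicate point is bookkeeping the boundary cases so that the counts (i) and (ii) multiply cleanly and the subtraction of one unit in $(p-1)$ is genuinely justified — in particular handling occurrences that lie inside one $\beta$ but started being "cut" by a skeleton boundary (these are the type-(a) ones, absorbed by $q$), versus occurrences that straddle, and making sure an occurrence straddling several consecutive $\beta$'s with empty $\alpha_i$'s between them is still pinned down by a single choice from the first list. One must also confirm that distinct strings, not merely distinct occurrences, are what we are counting, so that over-counting is harmless. Once the classification is set up, each piece is a routine estimate; I expect the classification itself — especially arguing that the skeleton-projection lands among the $\le p$ sub-expressions of $\alpha$ and that exactly one of them (the bare ${\sf c}$) is redundant for type (b) — to be where the care is needed.

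**The final inequality.**
Adding the two bounds gives $\nusus\gamma \le (p-1)r^2 + q$. For the stated consequence, I simply invoke the preceding lemma's bound $\nusus\beta \le |\beta|^2 + 1$, i.e.\ $q \le r^2 + 1$, so that
\[
\nusus{\alpha[{\sf c}:=\beta]} \le (p-1)r^2 + q \le (p-1)r^2 + r^2 + 1 = p r^2 + 1,
\]
which is the asserted simplification.
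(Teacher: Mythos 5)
Your argument is essentially the paper's: both project each sub-string occurrence of $\alpha[{\sf c}:=\beta]$ back to a sub-string occurrence of $\alpha$ by contracting the inserted material to ${\sf c}$'s, bound the strings lying inside a single copy of $\beta$ (your type (a)) by $q$, and bound every other fibre by $r^2$ via the choice of a non-empty suffix of $\beta$ at the left end and a non-empty prefix at the right end. The boundary worry you flag is resolved exactly as the paper does it: contract each inserted copy of $\beta$ \emph{individually} to its own ${\sf c}$ (the paper colours each copy with a distinct colour), rather than each maximal run of consecutive copies to a single ${\sf c}$; then an occurrence straddling several adjacent copies projects to ${\sf c}{\sf c}\cdots{\sf c}$, a sub-string of $\alpha$ distinct from the bare ${\sf c}$, and your type-(b) count goes through.
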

	
	\begin{proof}
		Let $\alpha^\ast$ be  $\alpha[{\sf c}:= \beta]$ where the original string $\alpha$ is supposed to be black and where we colored each sub-string occurrence
		of $\beta$ that resulted from the substitution with a unique color. We will consider black not to be a color here. 
		Consider any non-empty sub-string $\gamma^\ast$ of $\alpha^\ast$.  We replace all maximal sub-string occurrences of the same color by a single occurrence of {\sf c}.
		This corresponds to a unique sub-string occurrence of $\gamma$ in $\alpha$. If $\gamma$ is {\sf c}, it has $q-1$ originals-qua-type. 
		If $\gamma \neq{\sf c}$, the largest number of originals-qua-type obtains when $\gamma$ both begins and ends with a {\sf c}.
		This number of originals is $\leq r^2$. So, $\nusus{\alpha[{\sf c} := \beta]} \leq (p-1)r^2 + q-1 +1 =  (p-1)r^2 + q$.
	\end{proof}
	
	Let $\alpha_0,\alpha_1,\dots$ be an
	effective enumeration of the strings in $\mathcal A^\ast_{\sf c}$. We assume that if $\gamma \sqsubseteq \alpha_n$, then, 
	for some $k\leq n$, we have $\gamma= \alpha_k$.
	The following trivial observation is very useful:
	\begin{lemma}
		$\nusus {\alpha_k} \leq k+1$.
	\end{lemma}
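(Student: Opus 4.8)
The plan is to exploit the single hypothesis we are handed about the enumeration $(\alpha_n)_{n\in\omega}$, namely its downward closure under the sub-string relation: if $\gamma \sqsubseteq \alpha_n$, then $\gamma = \alpha_k$ for some $k \leq n$. First I would unfold the definition: $\nusus{\alpha_k}$ is by fiat the cardinality of the set $\verz{\beta \mid \beta \sqsubseteq \alpha_k}$. Next I would note that every member $\beta$ of this set is a sub-string of $\alpha_k$, so by the downward-closure assumption there is some index $j \leq k$ with $\beta = \alpha_j$. Consequently $\verz{\beta \mid \beta \sqsubseteq \alpha_k} \subseteq \verz{\alpha_0, \dots, \alpha_k}$, and since the latter set has at most $k+1$ elements, we get $\nusus{\alpha_k} \leq k+1$ immediately.

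I do not expect any genuine obstacle: this is simply the string-analogue of the earlier "trivial observation" $\nusu{A_k}\leq k+1$ for expressions, and the one-line argument transfers verbatim once "sub-expression" and $\preceq$ are replaced by "sub-string" and $\sqsubseteq$. The only subtlety worth flagging is that the count is taken \emph{qua-type} — it is the cardinality of a set of strings, not a count of occurrences — so multiple occurrences of the same sub-string collapse to one, which is precisely what makes the containment in $\verz{\alpha_0,\dots,\alpha_k}$ (and hence the cardinality bound) legitimate. One might also remark that an enumeration with the assumed downward-closure property always exists (e.g.\ by enumerating in order of non-decreasing length, or by interleaving so that each string is preceded by all of its sub-strings), but for the lemma itself this existence is not needed — we are only told to reason from the stated assumption.
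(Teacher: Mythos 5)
Your argument is correct and is exactly the intended one: the paper states this as a ``trivial observation'' without proof, and the one-line containment $\verz{\beta \mid \beta \sqsubseteq \alpha_k} \subseteq \verz{\alpha_0,\dots,\alpha_k}$ via the assumed downward closure of the enumeration is precisely what is meant. Your remarks on the qua-type counting and on the existence of such an enumeration are accurate but, as you note, not needed for the lemma itself.
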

	
	\noindent Here is our construction.
	
	\[
	\fcolorbox{lightgray}{lightgray}{\begin{minipage}{33em}
		
		\medskip
		We construct a list $\Lambda :=(\beta_n)_{n\in \omega}$ in stages $k$. Let $\nub(k) := 2^{k+15}+1$.
		Each stage $k$ will result in a list $\Lambda_k = \beta_0,\dots, \beta_{\nub(k)-1}$.
		To simplify the presentation, we make $\Lambda_{-1}$ the empty list and $\nub(-1) := 0$.
		
		\medskip
		In stage $k$, we act as follows. Let $\alpha_k^\ast := \alpha_k [{\sf c} := {\sf S}\widetilde{(k+15)}]$.
		Let the sub-strings of $\alpha_k^\ast$ that do not occur in $\Lambda_{k-1}$ and are not of the form $\bot^m$
		be $\alpha_{i_0},\dots, \alpha_{i_{\ell-1}}$,
		where the sequence $i_j$ is strictly increasing.
		We note that $\ell$ could be 0. We define $\beta_{\nub(k)-\ell +j} := \alpha_{i_j}$, for $j <\ell $.
		Let $s$ be the smallest number such that $\bot^s$ is not in  $\Lambda_{k-1}$.
		We set $\beta_{\nub(k-1)+p} := \bot^{s+p}$, for $p< \nub(k)-\nub(k-1)-\ell$.
		\end{minipage}
	}
	\]
	
	\bigskip\noindent
	To see that our construction is well-defined it is sufficient that $\nub(k)-\ell \geq \nub(k-1)$ or, equivalently,
	$\ell \leq \nub(k)-\nub(k-1)$. We note that, if $k=0$, we have $\alpha_{0} = \varepsilon$, and hence $\ell =1$.
	Moreover, $\nub(0)-\nub(0-1)= 2^{15}+1$. So, we are done. Suppose $k>0$. Clearly, $\nub(k)-\nub(k-1)= 2^{k+14}$. 
	In case {\sf c} does not occur in $\alpha_k$, we have $\ell \leq \nusus{\alpha_k^\ast}= \nusus{\alpha_k} \leq k+1 \leq 2^{k+14}$.
	In case {\sf c} does occur $\alpha_k$, we have: 
	\[
	\ell \leq \nusus{\alpha_k^\ast} \leq \nusus{\alpha_k}|{\sf S}\widetilde {k+15}|^2 + 1  \leq (k+1)(7(k+15)+2)^2+1 \leq 2^{k+14}
	\]
	We leave the verification of the last inequality to the diligent reader.

	\begin{lemma}\label{hippiesmurfB}
		Suppose $\alpha_k \in \mathcal A^\ast$. Then, $\alpha_k$ is in $\Lambda_k$ and, hence, in $\Lambda$.
	\end{lemma}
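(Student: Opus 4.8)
The plan is to mimic the proof of Lemma~\ref{hippiesmurfA} line by line, now reading ``sub-string'' for ``sub-expression'', $\bot^m$ for $\underline m$, $\sqsubseteq$ for $\preceq$, and $\nub(k)=2^{k+15}+1$ for the earlier value of $\nub$. The starting point is that, since $\alpha_k\in\mathcal A^\ast$, the fresh letter $\mathsf c$ does not occur in $\alpha_k$; hence the substitution carried out in stage $k$ is vacuous and $\alpha_k^\ast=\alpha_k$.

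I would then run through the three cases built into the description of stage $k$. If $\alpha_k$ already occurs in $\Lambda_{k-1}$, there is nothing to prove. If $\alpha_k\notin\Lambda_{k-1}$ and $\alpha_k$ is not of the form $\bot^m$, then, since $\alpha_k\sqsubseteq\alpha_k=\alpha_k^\ast$, the string $\alpha_k$ is one of the strings $\alpha_{i_0},\dots,\alpha_{i_{\ell-1}}$ that the construction appends in stage $k$, so $\alpha_k\in\Lambda_k$. The only case needing attention is $\alpha_k=\bot^m$. Here every sub-string of $\alpha_k$ has the form $\bot^{m'}$ with $m'\le m$, so none of them is one of the $\alpha_{i_j}$ and thus $\ell=0$; moreover $m+1=\nusus{\bot^m}=\nusus{\alpha_k}\le k+1$ by the trivial bound on $\nusus{\alpha_k}$, so $m\le k$. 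In stage $k$ the construction appends the contiguous block $\bot^s,\bot^{s+1},\dots,\bot^{s+(\nub(k)-\nub(k-1))-1}$, where $s$ is least with $\bot^s\notin\Lambda_{k-1}$. To conclude $\bot^m\in\Lambda_k$ I would check two things: first, that $\{m':\bot^{m'}\in\Lambda_{k-1}\}$ is the initial segment $\{0,\dots,s-1\}$, so that $\Lambda_{k-1}$ together with the new block covers all of $\bot^0,\dots,\bot^{s+(\nub(k)-\nub(k-1))-1}$; second, that $m\le s+(\nub(k)-\nub(k-1))-1$, which follows from $m\le k$ once we note the crude estimate $\nub(k)-\nub(k-1)\ge k+1$ (indeed $\nub(0)-\nub(-1)=2^{15}+1$, and $\nub(k)-\nub(k-1)=2^{k+14}$ for $k\ge1$). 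Together these give $\alpha_k=\bot^m\in\Lambda_k$, and hence $\alpha_k\in\Lambda$.

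The one substantive point is the initial-segment claim for the $\bot$-powers already placed, and I expect this to be the main (if minor) obstacle. I would dispose of it by a short side induction on $k$: the construction never introduces a string of the form $\bot^m$ through the sub-string clause, because such strings are explicitly excluded from the list $\alpha_{i_0},\dots,\alpha_{i_{\ell-1}}$, and the dedicated $\bot$-power clause at each stage appends a block beginning exactly at the least exponent not yet used; so if the $\bot$-powers in $\Lambda_{k-1}$ form an initial segment, so do those in $\Lambda_k$. Alternatively, one could first establish the string analogue of Lemma~\ref{neerslachtigesmurfA} (if $\gamma\sqsubseteq\beta_n$ then $\gamma=\beta_j$ for some $j\le n$) and read off downward-closedness from it exactly as in the expression case; but since that lemma is not yet available at this point in the development, the direct induction is the more economical route. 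Everything else is the same bookkeeping as in Lemma~\ref{hippiesmurfA}.
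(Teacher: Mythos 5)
Your proof is correct and follows essentially the same route as the paper's: the same three-case split, with the $\bot^m$ case disposed of by the bound $m+1=\nusus{\bot^m}\leq k+1<\nub(k)-\nub(k-1)$. The side induction you flag as the ``one substantive point'' is actually superfluous for this lemma: by the minimality of $s$, the exponents $0,\dots,s-1$ are already represented in $\Lambda_{k-1}$ whether or not the represented exponents form an initial segment, and together with the new block this covers every exponent up to $s+\nub(k)-\nub(k-1)-1\geq k\geq m$ (downward-closedness is only needed later, for the no-repetitions lemma, where the paper derives it from Lemma~\ref{neerslachtigesmurfB}).
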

	
	\begin{proof}
		Consider stage $k$. We note that $\alpha_k^\ast = \alpha_k$. In case $\alpha_k$ occurs in $\Lambda_{k-1}$, we are done.
		In case $\alpha_k$ is not in $\Lambda_{k-1}$ and not of the form $\bot^m$, clearly, $\alpha_k$ will be added, so we are again done.
		Suppose $\alpha_k = \bot^m$, for some $m$. It follows that $m+1=\nusus{\bot^m} \leq k+1$.
		We note that all sub-strings of $\alpha_k$ are all of the form $\bot^{m'}$, for $m'\leq m$. So $\ell = 0$.
		Clearly, $m+1 \leq k+1 < \nub(k) - \nub(k-1)$. So, all sub-expressions of $\alpha_k$ are either in $\Lambda_{k-1}$ or will be added.
	\end{proof}
	
	\begin{lemma}\label{neerslachtigesmurfB}
		Suppose $\gamma \sqsubseteq \beta_n$. Then, for some $j\leq n$, we have $\gamma = \beta_j$.
	\end{lemma}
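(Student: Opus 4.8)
The plan is to replay, essentially line for line, the proof of Lemma~\ref{neerslachtigesmurfA}, with the strings $\bot^m$ now taking over the role that the standard numerals $\underline m$ played there for sub-expressions. Fix $\gamma \sqsubseteq \beta_n$ and let $k$ be the stage at which $\beta_n$ is placed into $\Lambda$. By the construction, either $\beta_n = \bot^m$ for some $m$, or $\beta_n = \alpha_{i_j}$ is a sub-string of $\alpha_k^\ast$ that is not of the form $\bot^m$; in the latter situation $\alpha_k$ itself cannot be of the form $\bot^m$, since then $\alpha_k^\ast = \alpha_k$ has only sub-strings of the form $\bot^m$ and hence $\ell = 0$. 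Three bookkeeping facts will be used repeatedly: everything added at stage $k$ occupies a position in the range $\nub(k-1),\dots,\nub(k)-1$; everything already in $\Lambda_{k-1}$ occupies a position below $\nub(k-1)$; and $\nub(k)-\ell \geq \nub(k-1)$ (the well-definedness check).

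If $\beta_n = \bot^m$, let $s$ be least with $\bot^s \notin \Lambda_{k-1}$. Then $\bot^0,\dots,\bot^{s-1}$ already lie in $\Lambda_{k-1}$, hence at positions below $\nub(k-1) \leq n$, whereas $\bot^s,\bot^{s+1},\dots,\bot^m$ are assigned at stage $k$ to the consecutive positions $\nub(k-1),\nub(k-1)+1,\dots,n$. Since every sub-string of $\bot^m$ is $\bot^{m'}$ for some $m' \leq m$, each such $\gamma$ occupies a position $\leq n$, as required.

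If $\beta_n = \alpha_{i_j}$, I distinguish the shape of $\gamma$. Suppose first $\gamma = \bot^p$. The key point is that the substituted string ${\sf S}\widetilde{(k+15)}$ contains no occurrence of the letter $\bot$, so forming $\alpha_k^\ast = \alpha_k[{\sf c} := {\sf S}\widetilde{(k+15)}]$ neither creates a new maximal run of $\bot$'s nor merges two old ones; hence $\bot^p \sqsubseteq \alpha_k^\ast$ already entails $\bot^p \sqsubseteq \alpha_k$. By the standing hypothesis on the enumeration $(\alpha_m)_m$ we get $\bot^p = \alpha_{j'}$ for some $j' \leq k$, and since $\alpha_k \neq \bot^p$ necessarily $j' \neq k$, so $j' \leq k-1$; as $\bot^p \in \mathcal A^\ast$, Lemma~\ref{hippiesmurfB} gives $\bot^p \in \Lambda_{j'} \subseteq \Lambda_{k-1}$, and so $\gamma$ occupies a position below $\nub(k-1) \leq n$. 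Suppose instead $\gamma$ is not of the form $\bot^p$. Then $\gamma \sqsubseteq \alpha_k^\ast$ and $\gamma$ is not of this form, so either $\gamma \in \Lambda_{k-1}$, and we are done as before, or $\gamma$ is among $\alpha_{i_0},\dots,\alpha_{i_{\ell-1}}$, say $\gamma = \alpha_{i_{j'}}$. In this last case $\gamma \sqsubseteq \alpha_{i_j}$ forces, by the enumeration hypothesis together with the fact that the $\alpha_m$ are pairwise distinct, $i_{j'} \leq i_j$, hence $j' \leq j$ because the $i_l$ strictly increase, so $\gamma = \beta_{\nub(k)-\ell+j'}$ sits at a position $\leq \nub(k)-\ell+j = n$ (with equality exactly when $\gamma = \beta_n$).

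The only step that is more than position arithmetic is the sub-case $\gamma = \bot^p$: everything hinges on the observation that substituting ${\sf S}\widetilde{(k+15)}$ for {\sf c} cannot fabricate a block of consecutive $\bot$'s that is not already present in $\alpha_k$, which is precisely what lets the enumeration hypothesis and Lemma~\ref{hippiesmurfB} be invoked exactly as in the proof of Lemma~\ref{neerslachtigesmurfA}. I would expect this to cost a sentence or two of care about how substitution acts on strings; the remainder is a routine transcription of the sub-expression argument.
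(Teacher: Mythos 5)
Your proof is correct and follows essentially the same case analysis as the paper's own proof of Lemma~\ref{neerslachtigesmurfB} (which in turn mirrors Lemma~\ref{neerslachtigesmurfA}). The only difference is that you make explicit two points the paper leaves tacit---that $\alpha_k$ cannot itself be of the form $\bot^m$ in the second main case, and that substituting the $\bot$-free string ${\sf S}\widetilde{(k+15)}$ for ${\sf c}$ cannot create new runs of $\bot$'s, so that $\bot^p \sqsubseteq \alpha_k^\ast$ already yields $\bot^p \sqsubseteq \alpha_k$ and hence licenses the appeal to the enumeration hypothesis and Lemma~\ref{hippiesmurfB}---which is a welcome bit of extra care.
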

	
	\begin{proof}
		Let $\gamma\sqsubseteq \beta_n$.
		Suppose $\beta_n$ is added in stage $k$. Then, $\beta_n$ is either of the form $\bot^m$ or a sub-string
		of $\alpha^\ast_k$ not of the form $\bot^m$.
		
		Suppose $\beta_n = \bot^m$.
		Let $s$ be the smallest number such that $\bot^s$ is not in  $\Lambda_{k-1}$.
		For all $s'< s$, we have $\bot^{s'}\in \Lambda_{k-1}$. By our construction, all $\bot^{s''}$, for
		$s\leq s''\leq m$ are added at stage $k$ in ascending order. So, for all $s^\circ \leq m$, we find that $\bot^{s^\circ}$ precedes
		$\bot^m$ in $ \Lambda_k$. Finally,  $\gamma$ as a sub-string of $\bot^m$ is of the form $\bot^{s^\circ}$, for
		some $s^\circ \leq m$.
		
		Suppose $\beta_n$ is added as a sub-string of $\alpha^\ast_k$ not of the form $\bot^m$. 
		First suppose $\gamma= \bot^p$.
		We note that $p+1 = \nusus{\bot^p} < \nusus{\alpha_k} \leq k+1$. This tells us, by Lemma~\ref{hippiesmurfB}, that $\bot^p$ is in $\Lambda_{k-1}$.
		Hence, it is added to $\Lambda$ before $\beta_n$.  Now suppose that $\gamma$ is not of the form $\bot^p$. 
		Then, either $\gamma$ is in $\Lambda_{k-1}$ or added to $\Lambda_k$ before $\beta_n$.
	\end{proof}
	
	\begin{lemma}
		The enumeration $\Lambda$ is without repetitions.
	\end{lemma}
	
	\begin{proof}
		Consider any $\gamma \in \mathcal A^\ast$. If $\gamma$ is not of the form $\bot^m$ by our construction it will be only added once.
		Suppose $ \gamma = \bot^m$. We note that, by Lemma~\ref{neerslachtigesmurfB}, the $n$ such that $\bot^n \in \Lambda_{k-1}$
		are downwards closed. This means that, in our construction, there is no $m\geq s$, such that $\bot^m \in \Lambda_{k-1}$.
		So all the $\bot^m$ that are added in any stage $k$ are new.
	\end{proof}
	
	\begin{lemma}\label{geestigesmurfB}
		Suppose ${\sf S}\widetilde n$ occurs in $\Lambda_k$, then $n \leq k+15$.
	\end{lemma}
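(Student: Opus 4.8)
The plan is to mirror the proof of Lemma~\ref{geestigesmurfA}, arguing by induction on $k$, with the overlap analysis of Lemma~\ref{grumpysmurf} doing the work that was essentially trivial in the sub-expression setting. For the base case $k=0$ we have $\alpha_0=\varepsilon$, hence $\alpha_0^\ast=\varepsilon$, and $\Lambda_0$ consists of $\varepsilon$ together with strings of the form $\bot^p$; none of these contains the letter ${\sf S}$, so none contains an occurrence of ${\sf S}\widetilde n$ for any $n$, and the claim holds vacuously.

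For the induction step, let $k>0$ and assume the claim for $k-1$. The list $\Lambda_k$ extends $\Lambda_{k-1}$ by (i) finitely many strings of the form $\bot^p$ and (ii) those sub-strings of $\alpha_k^\ast:=\alpha_k[{\sf c}:={\sf S}\widetilde{(k+15)}]$ that are neither in $\Lambda_{k-1}$ nor of the form $\bot^p$. Occurrences of ${\sf S}\widetilde n$ inside strings inherited from $\Lambda_{k-1}$ satisfy $n\leq(k-1)+15<k+15$ by the induction hypothesis; strings of the form $\bot^p$ contain no such occurrence; and since any newly added string is a sub-string of $\alpha_k^\ast$, it suffices to show that every occurrence of ${\sf S}\widetilde n$ in $\alpha_k^\ast$ has $n\leq k+15$.

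So fix an occurrence $o$ of ${\sf S}\widetilde n$ in $\alpha_k^\ast$; we may assume $n\neq 0$, the case $n=0$ being trivial. Write $\rho:={\sf S}\widetilde{(k+15)}$ and let $p_1,\dots,p_r$ be the occurrences of $\rho$ in $\alpha_k^\ast$ arising from the substitution (one for each occurrence of ${\sf c}$ in $\alpha_k$); the portions of $\alpha_k^\ast$ lying outside $p_1,\dots,p_r$ are exactly the maximal ${\sf c}$-free blocks of $\alpha_k$, each of which is a sub-string of $\alpha_k$. If $o$ is disjoint from every $p_i$, then — the $p_i$ being non-empty and separating these blocks — $o$ lies inside a single such block, so ${\sf S}\widetilde n\sqsubseteq\alpha_k$; hence every sub-string of ${\sf S}\widetilde n$ is a sub-string of $\alpha_k$, whence $\nusus{{\sf S}\widetilde n}\leq\nusus{\alpha_k}\leq k+1$, and since $\nusus{{\sf S}\widetilde n}>n$ (indeed it exceeds $|{\sf S}\widetilde n|=7n+2$) we get $n<k+1\leq k+15$. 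Otherwise $o$ overlaps some $p_i$; since $p_i$ is an occurrence of ${\sf S}\widetilde{(k+15)}$ with $k+15\neq 0$ and $n\neq 0$, Lemma~\ref{grumpysmurf} forces $o=p_i$, and hence $n=k+15$. In either case $n\leq k+15$, completing the induction.

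The one point that needs care — and the reason Lemma~\ref{grumpysmurf} was set up beforehand — is the possibility of an occurrence of ${\sf S}\widetilde n$ that straddles the boundary between original material of $\alpha_k$ and a substituted copy of $\rho$, or spans several copies: a priori such an occurrence could have $n$ far larger than $k+15$. Lemma~\ref{grumpysmurf} rules this out, forcing any occurrence that meets a substituted copy to coincide with it, and the only remaining possibility — an occurrence confined to the original ${\sf c}$-free material — is controlled exactly as in the sub-expression case via $\nusus{\alpha_k}\leq k+1$. I expect this straddling analysis (together with the small bookkeeping observation that an occurrence avoiding all copies of $\rho$ must sit inside a single original block, since $\rho\neq\varepsilon$) to be the main, though modest, obstacle; everything else transfers routinely from the proof of Lemma~\ref{geestigesmurfA}.
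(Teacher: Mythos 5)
Your proof is correct and follows essentially the same route as the paper's: induction on $k$, with the new material at stage $k$ reduced to occurrences of ${\sf S}\widetilde n$ in $\alpha_k^\ast$ and the dichotomy between occurrences lying inside $\alpha_k$ (bounded via $n<\nusus{{\sf S}\widetilde n}\leq\nusus{\alpha_k}\leq k+1$) and occurrences coinciding with a substituted copy of ${\sf S}\widetilde{(k+15)}$. The only difference is that you spell out, via Lemma~\ref{grumpysmurf}, why no occurrence can straddle the boundary of a substituted copy --- a step the paper's two-case split leaves implicit --- and your base case is in fact sharper than the paper's, since $\Lambda_0$ contains no occurrence of ${\sf S}$ at all.
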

	
	\begin{proof}
		We prove this by induction on $k$. In stage 0, we easily verify that the largest term of the form $\mathsf{S}\widetilde n$ can be at most
		$\mathsf{S} \widetilde{15}$.
		
		Suppose, we have our desired estimate for $k-1$, where $k>0$. We prove our estimate for $k$.
		Clearly, the $\mathsf{S}\widetilde n$ occurring in $\Lambda_{k-1}$ do satisfy our estimate.
		Moreover, all the $\bot^p$ added in stage $k$ do not provide new elements of the form $\mathsf{S}\widetilde n$.
		So, the only interesting case is the case where $\alpha_k$ is not a string of $\bot$'s. Suppose it is not.
		The ${\sf S}\widetilde n$ that are added are all sub-strings of $ \alpha^\ast_k= \alpha_k[{\sf c} := {\sf S}\widetilde{(k+15)}]$. 
		So, we should focus on the largest
		such sub-string.
		Suppose ${\sf S}\widetilde n \sqsubseteq \alpha^\ast_k$.
		There are two possibilities:
		\begin{enumerate}[i.]
			\item ${\sf S}\widetilde n\sqsubseteq \alpha_k$. In this case, we have 
			$n < \nusus{\mathsf{S}\widetilde n} \leq \nusus{\alpha_k} \leq k+1 \leq k +15$.
			\item
			$n= k+15$, since the sub-string results from the substitution for {\sf c}.
		\end{enumerate}
		In each of the possible cases, we are done.
	\end{proof}
	
	\begin{lemma}
		Consider any $\alpha$ such that {\sf c} occurs in $\beta$. Then, for some $n$, $\beta_n = \alpha[{\sf c}:= \overline n]$.
	\end{lemma}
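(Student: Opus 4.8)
The plan is to run the exact analogue of the proof of Lemma~\ref{lem:gn3isselfref}, now for the string construction of Section~\ref{section:construction2B}. (The hypothesis should of course read: {\sf c} occurs in $\alpha$, with conclusion $\beta_n = \alpha[{\sf c}:=\overline n]$.) Fix such an $\alpha$ and write $\alpha = \alpha_k$ in the fixed enumeration $\alpha_0,\alpha_1,\dots$ of $\mathcal A^\ast_{\sf c}$. The claim to establish is that $n := \nub(k)-1$ witnesses the lemma, i.e.\ that $\beta_{\nub(k)-1} = \alpha_k[{\sf c}:=\overline{\nub(k)-1}]$.

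First I would show that $\alpha^\ast_k = \alpha_k[{\sf c} := {\sf S}\widetilde{(k+15)}]$ does not occur in $\Lambda_{k-1}$. For $k=0$ this is immediate, since $\Lambda_{-1}$ is empty. For $k>0$: since {\sf c} occurs in $\alpha_k$, the string ${\sf S}\widetilde{(k+15)}$ occurs in $\alpha^\ast_k$; but by Lemma~\ref{geestigesmurfB}, whenever ${\sf S}\widetilde m$ occurs in $\Lambda_{k-1}$ we have $m \leq (k-1)+15 = k+14 < k+15$, so ${\sf S}\widetilde{(k+15)}$, and a fortiori $\alpha^\ast_k$, does not occur in $\Lambda_{k-1}$. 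Next, $\alpha^\ast_k$ is not of the form $\bot^m$ (it contains an {\sf S}), so at stage $k$ it is among the newly listed strings $\alpha_{i_0},\dots,\alpha_{i_{\ell-1}}$; in particular $\ell \geq 1$. Since every substring $\gamma \sqsubseteq \alpha^\ast_k$ equals $\alpha_j$ for some $j$ no larger than the index of $\alpha^\ast_k$ (by our standing assumption on the enumeration), the string $\alpha^\ast_k$ carries the largest index among all substrings of $\alpha^\ast_k$, hence among the $\alpha_{i_j}$; thus $\alpha^\ast_k = \alpha_{i_{\ell-1}}$, and the construction places it at slot $\beta_{\nub(k)-\ell+(\ell-1)} = \beta_{\nub(k)-1}$.

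It then remains to identify ${\sf S}\widetilde{(k+15)}$ with the efficient numeral $\overline{\nub(k)-1}$ \emph{as terms}, not merely numerically. Here $\nub(k)-1 = 2^{k+15}$. Writing $m := k+15$, we have $\widetilde m = {\sf S}({\sf SS}\zero \times \widetilde{(m-1)})$, hence ${\sf S}\widetilde m = {\sf SS}({\sf SS}\zero \times \widetilde{(m-1)})$, which is an instance of the third clause of the efficient-numeral grammar; and by Lemma~\ref{lolligesmurf}, ${\sf ev}(\widetilde{(m-1)}) = 2^{m-1}-1$, so this term has value $2(2^{m-1}-1)+2 = 2^m = \nub(k)-1$. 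Since the efficient numeral of a given value is unique, ${\sf S}\widetilde{(k+15)} = \overline{\nub(k)-1}$, and therefore $\beta_{\nub(k)-1} = \alpha_k[{\sf c}:={\sf S}\widetilde{(k+15)}] = \alpha[{\sf c}:=\overline{\nub(k)-1}]$, as desired.

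The step I expect to require the most care is the bookkeeping in the second paragraph: one must check that $\alpha^\ast_k$ lands at \emph{exactly} index $\nub(k)-1$, rather than merely somewhere within stage $k$. This rests on its being the $\sqsubseteq$-maximal — hence index-maximal — newly added non-$\bot$ substring, together with the well-definedness bound $\ell \leq \nub(k)-\nub(k-1)$ verified just after the construction box. Once that is in place, the identification ${\sf S}\widetilde{(k+15)} = \overline{\nub(k)-1}$ is a routine inspection of the efficient-numeral grammar.
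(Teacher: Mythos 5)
Your proposal is correct and follows essentially the same route as the paper's proof: identify $n=\nub(k)-1$, show via Lemma~\ref{geestigesmurfB} that ${\sf S}\widetilde{(k+15)}$ (hence $\alpha^\ast_k$) is absent from $\Lambda_{k-1}$, and conclude $\beta_{\nub(k)-1}=\alpha^\ast_k$ with ${\sf S}\widetilde{(k+15)}=\overline{\nub(k)-1}$. You merely make explicit two steps the paper leaves implicit — that $\alpha^\ast_k$ is the index-maximal newly added substring and so lands at slot $\nub(k)-1$, and the term-level identification of ${\sf S}\widetilde{(k+15)}$ with the efficient numeral of $2^{k+15}$ — and you correctly note the typo ($\beta$ for $\alpha$) in the statement.
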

	
	\begin{proof}
		Suppose {\sf c} occurs in $\alpha$. Let $\alpha := \alpha_k$. The only thing we have to show is that $\alpha^\ast_k$ is not in $\Lambda_{k-1}$.
		This is certainly true for $k=0$.
		Suppose $k>0$. We note that ${\sf S}\widetilde {(k+15)}$ occurs in $\alpha^\ast_k$. So, it is sufficient to show that
		${\sf S}\widetilde {(k+15)}$ does not occur in $\Lambda_{k-1}$.
		By Lemma~\ref{geestigesmurfB}, whenever $\mathsf{S}\widetilde n$ occurs in $\Lambda_{k-1}$, we have
		$n \leq k+14$. We may conclude that
		$\beta_{\nub(k)-1} = \alpha_k[\mathsf{c} := \mathsf{S}(\widetilde{k+15})]$. 
		Since $\mathsf{S}(\widetilde{k+15}) = \overline{\nub(k)-1}$, we are done.
	\end{proof}
	
	Setting ${\sf gn}_1(\alpha)$ to be the unique $n$ such that $\alpha = \beta_n$, we have found a monotonic self-referential G\"odel numbering ${\sf gn}_1$.
	Here monotonicity is defined with respect to the sub-string relation.

	\section{G\"odel Numerals as Quotations}
	\label{section:numeralsasquotations}
	In the philosophical literature, G\"odel numerals are typically conceived of as canonical 
	names for their coded expressions. One of the most canonical 
	naming devices is quotation, assigning to each expression $A$ its name \enquote{$A$}. Accordingly, the Tarski biconditional $\mathsf{T}(\underline{\xi(A)}) \leftrightarrow A$ for instance is often taken as an arithmetical proxy for the sentence \enquote{\enquote{$A$} is true iff $A$}.
	
	The standard numeral $\underline{\xi(A)}$ is \textit{prima facie} not the only candidate for an arithmetical proxy for quotation. As there are different adequate ways to quote an expression $A$, such as \enquote{$A$}, $\mlq A \mrq$, \guillemotleft $A$\guillemotright, etc., different numerals may be reasonably taken as adequate arithmetical proxies of $A$'s quotation, such as $\underline{\xi(A)}$, $\overline{\xi(A)}$, etc.\footnote{\label{ftn:numeralasquotation}While we agree that both standard and efficient G\"odel numerals are canonical names for their coded expressions, we remain sceptical as to whether they should be understood as proxies for quotations. For instance, the word \enquote{snow} can be adequately named by its structural-descriptive name \textit{ess} $\concat$ \textit{en} $\concat$ \textit{oh} $\concat$ \textit{doubleyoo}, by \enquote{\enquote{wons}}, by \enquote{\enquote{ssnnooww}}, by baptising it \enquote{(1)}, etc., all of which do not contain the named expression. The justification of the constraint M4($\nu$), introduced below, seems thus to rely on a specific feature of quotation which is not shared by several other adequate naming devices.}
	
	Once $\nu$-numerals are taken as adequate arithmetical proxies for quotations, another criterion for numberings may be extracted from the basic idea underlying monotonicity, which has been suggested by \cite{Halbach2018}. Recall that monotonic numberings have the property that the code of an expression $A$ is larger than the code of any expression (strictly) contained in~$A$. Since each quotation properly contains its quoted expression, it may be argued, along the lines of Section~\ref{subsection:adequacy}, that adequate numberings should preserve this structural feature by requiring that the code of the G\"odel $\nu$-numeral of an expression $A$ is larger than the code of $A$ itself:
	\begin{enumerate}
		\item[M4($\nu$).] $\xi(A) < \xi( \nu(\xi(A)) )$ for all $A \in \mathcal{L}$,
	\end{enumerate}
	
	We call a numbering $\xi$ of $\mathcal{L}$ \textit{strongly monotonic for $\nu$-numerals}, if $\xi$ satisfies M1-M3 as well as M4($\nu$).
	
	We note that M4($\nu$) rules out that $\xi(\zero) = 0$ and $\nu(0) = \zero$, which holds for certain length-first orderings and standard numerals. These standard numberings are not excluded by its weakened version M4$^\ast$($\nu$), which is obtained by replacing $<$ by $\leq$ in M4($\nu$). In what follows, we will always employ the version of M4($\nu$) which yields the stronger result.
	
	Since the standard numeral $\underline{n}$ has $n$-many proper sub-terms, any numbering satisfying M1 also satisfies M4($\underline{\cdot}$). Strong monotonicity is however not entailed by monotonicity when efficient numerals are employed. For instance, as will follow from the next lemma, the numbering ${\sf gn}_0$ is monotonic but not strongly monotonic for efficient numerals. In fact, strong monotonicity is incompatible with the self-referentiality of numberings.
	
	\begin{lem}
		\label{lem:limitation1}
		Let $\nu$ be a numeral function for $\mathcal{L}$. There is no numbering of $\mathcal{L}$ which is self-referential for $\nu$ and satisfies \textnormal{M3 \& M4$^\ast$($\nu$)}.
	\end{lem}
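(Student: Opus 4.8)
The plan is to derive a contradiction from the three hypotheses by applying them to a single self-referential sentence; in contrast to Lemma~\ref{lem:limitation:monotonicandSR}, no iterated-disjunction blow-up will be needed, because M4$^\ast$($\nu$) already forces the code of the relevant numeral to be at least as large as the code of the self-referential sentence, and this collides at once with the downward estimate supplied by M3. Note that neither M1 nor M2 enters the argument.

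Concretely, suppose $\xi$ were a numbering of $\mathcal{L}$ that is self-referential for $\nu$ and satisfies M3 and M4$^\ast$($\nu$). First fix any $\mathcal{L}$-formula $B(x)$ in which $x$ actually occurs free, say $B(x) := (x = x)$, which is available for every $\mathcal{L} \supseteq \mathcal{L}^0$. Applying self-referentiality to $B(x)$ yields some $n \in \omega$ with $n = \xi(B(\nu(n)))$; put $C := B(\nu(n))$, so $\xi(C) = n$. Since $x$ occurs in $B(x)$, the closed term $\nu(n)$ occurs in the formula $C$, and because terms and formul{\ae} are disjoint syntactic categories (in both infix and Polish notation) this occurrence witnesses $\nu(n) \prec C$; hence M3 gives $\xi(\nu(n)) < \xi(C) = n$. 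On the other hand, M4$^\ast$($\nu$) applied to the expression $C$ gives $\xi(C) \leq \xi(\nu(\xi(C)))$, which, reading $\nu(\xi(C))$ call-by-value and using $\xi(C) = n$, is exactly $n \leq \xi(\nu(n))$. Combining the two inequalities yields $n \leq \xi(\nu(n)) < n$, a contradiction.

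There is no genuinely hard step here; the only points deserving an extra line are the verification that substituting $\nu(n)$ for the free occurrence(s) of $x$ produces an expression \emph{properly} containing $\nu(n)$ (so that M3 applies), and the call-by-value bookkeeping identifying $\xi(\nu(\xi(C)))$ with $\xi(\nu(n))$. After the proof I would record the promised consequence: since ${\sf gn}_0$ is monotonic --- in particular satisfies M3 --- and is self-referential for efficient numerals (Section~\ref{section:construction2}), the lemma forces ${\sf gn}_0$ to violate M4$^\ast$($\overline{\cdot}$), and a fortiori M4($\overline{\cdot}$); thus ${\sf gn}_0$ is monotonic but not strongly monotonic for efficient numerals.
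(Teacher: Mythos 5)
Your proposal is correct and coincides with the paper's own argument: both apply self-referentiality to a single formula with $x$ free, use M3 via $\nu(n)\prec A(\nu(n))$ to get $\xi(\nu(n))<n$, and use M4$^\ast$($\nu$) together with $\xi(A(\nu(n)))=n$ to get $n\leq\xi(\nu(n))$, yielding the same one-line contradiction. The extra observations (choosing $B(x):=(x=x)$ explicitly, and the corollary that ${\sf gn}_0$ fails strong monotonicity) are consistent with what the paper states around the lemma.
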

	
	\begin{proof}
		Suppose $\xi$ is a self-referential numbering of $\mathcal{L}$. Consider any formula $A(x)$ with free variable $x$ and $n \in \omega$ such that $\xi(A(\nu(n))) = n$. Since $\nu(n) \prec A(\nu(n))$, using M3 and M4$^\ast$($\nu$) 
		yields the contradiction\qedright
		\begin{equation*}
		n = \xi(A(\nu(n))) \leq \xi(\nu(\xi(A(\nu(n))))) = \xi(\nu(n)) < \xi(A(\nu(n))) = n.
		\end{equation*}
	\end{proof}
	
	Thus, if we strengthen the conception of monotonicity in the above manner, then, by Halbach's standards, there are no reasonable numberings which are self-referential.
	
	\subsection{A Critical Remark}
	
	We are somewhat sceptical whether strong monotonicity really is a necessary constraint for reasonable numberings (see also footnote~\ref{ftn:numeralasquotation}). For instance, we now show that the length-first numbering is not strongly monotonic for a fairly reasonable numeral function based on prime decomposition. Any philosophically stable view which excludes self-referential numberings as adequate formalisation choices along the above lines, hence appears to be committed to render the introduced numeral function below an inadequate naming device. 
	
	\begin{example}	\label{prideco}	
		Let $\mathcal{L}^{\sf Q} := \mathcal{L}^0 \cup \{ {\sf Q} \}$, where ${\sf Q}$ is a unary function symbol for \textit{taking the square}. Let $\mathfrak g$ be a numbering of $\mathcal{L}^{\sf Q}$ based on the length-first ordering.
		
		Are there plausible numerals for which strong monotonicity fails w.r.t.\ $\mathfrak g$? This is indeed the case.
		We give an example of such numerals that seems reasonably natural.
		
		We consider the following numeral function {\sf pd}. (`{\sf pd}' stands for \emph{prime decomposition}.)
		\begin{enumerate}[i.]
			\item ${\sf pd}(0) := \zero$ and ${\sf pd}(1) := {\sf S}\zero$.
			\item Suppose $n > 1$, and $n$ is not a power of a prime. Let $p$ be the smallest prime
			that divides $n$. Say $n = p^i\cdot m$, where
			$p$ does not divide $m$ and $i>0$. Then, ${\sf pd}(n) := \times {\sf pd}(p^i){\sf pd}(m)$.
			\item Suppose $n = p^i$, where $p$ is prime and $i>0$. We take  ${\sf pd}(p) := \underline p$. In case $i = 2k$, for $k>0$,
			we set ${\sf pd}(n) := {\sf Q}\,{\sf pd}(p^k)$. In case $i = 2k+1$, for $k>0$, we set 
			${\sf pd}(n) := \times \underline p\, {\sf Q}\,{\sf pd}(p^k)$.
		\end{enumerate}
		We find that, e.g., \[{\sf pd}(2^{2^m}) =  \overbrace{{\sf Q} \dots {\sf Q}}^{m \times}\underline 2.\]
		It follows that $|{\sf pd}(2^{2^m})| = m + 3$. Hence, by Lemma~\ref{lem:lengthfirstestimates},			
		\[\mathfrak g({\sf pd}(2^{2^m})) \leq \frac{16^{m+4}-1}{15} \]
		Since $2^{2^m}$ has double exponential growth, the failure of strong monotonicity is immediate.
		
		We note that {\sf pd} has good properties for multiplication but does not seem to be very friendly towards successor and addition.
		
		One can imagine many variants of {\sf pd}. For example, we can replace $\underline p$ by $\overline p$ in the definition.
		Alternatively, we can make $\mathfrak{p}$ where we replace clause (iii) by:
		\begin{itemize}
			\item Suppose $n = p^i$, where $p$ is prime and $i>0$. We take  ${\mathfrak p}(p) := {\sf S}{\mathfrak p}(p-1)$. In case $i = 2k$, for $k>0$,
			we set $\mathfrak p(n) := {\sf Q}\,\mathfrak p(p^k)$. In case $i = 2k+1$, for $k>0$, we set 
			$\mathfrak p(n) := \times {\sf S}\mathfrak p(p-1)\,{\sf Q}\,\mathfrak p(p^k)$.
		\end{itemize}
	\end{example}
	
	We leave an in-depth discussion of this constraint's adequacy for another occasion and, from now on, simply assume that there are good reasons to consider strong monotonicity to be a necessary constraint for reasonable numberings. In the next section we further examine the implications of this constraint on the attainability of self-reference.
	
	\subsection{A Strongly Monotonic Numbering with Strong Diagonalisation}
	\label{subsection:gn6}
	
	While strong monotonicity excludes self-referential numberings (Lemma~\ref{lem:limitation1}), we now show that this constraint is not sufficiently restrictive to rule out m-self-reference in $\mathcal{L}^0$. In particular, we construct a strongly monotonic numbering ${\sf gn}_2$ for efficient numerals which satisfies the Strong Diagonal Lemma for $\mathcal{L} \supseteq \mathcal{L}^0$:
	
	\begin{lem}[Strong Diagonal Lemma for ${\mathcal{L}}$]
		\label{lem:strongdiaglem}
		Let $C$ be an $\mathcal{L}$-expression with $x$ as a free variable. Then there exists a closed term $t$ in $\mathcal{L}$ such that ${\mathsf{R}_{\mathcal{L}} \vdash t = \underline{{\sf gn}_2(C[x := t])}}$.
	\end{lem}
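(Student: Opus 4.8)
The plan is to mimic the construction of $\mathsf{gn}_0$ from Section~\ref{section:construction2}, but to reserve the \enquote{name slot} not for a constant $\mathsf{c}$ that will be replaced by a numeral, but for the whole closed term $t$ that is to be built. Concretely, given $C(x)$, we want a closed term $t$ with value $\mathsf{gn}_2(C(t))$; the obstruction that killed self-referentiality under strong monotonicity (Lemma~\ref{lem:limitation1}) is that $\nu(\xi(A(\nu(n))))$ must already contain $\nu(n)$ as a subterm and hence be coded higher. The way around it is that $t$ need not be a numeral: we let $t$ itself be an efficient-numeral-like term whose \emph{value} is the desired code, but we choose the target code $n$ so large that $t$'s own subterms are all already enumerated before stage $k$, exactly as $\mathsf{S}\widetilde{(k+4)}$ was in the earlier construction. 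So I would run a staged enumeration $\Lambda=(B_n)$ of all $\mathcal{L}$-expressions, where at stage $k$ we look at $A_k^\ast := A_k[\mathsf{c} := \overline{\nub(k)-1}]$ for a fast-growing $\nub$, add the genuinely new non-numeral subexpressions at the top of the current block and pad below with fresh standard numerals, just as before. Setting $\mathsf{gn}_2(A) := $ the unique $n$ with $A = B_n$ gives a numbering; monotonicity (M1--M3) follows from the block-structure and the \enquote{subterms come first} discipline exactly as in Lemmas~\ref{neerslachtigesmurfA} and its relatives.

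The new ingredient is arranging M4(efficient numerals), i.e.\ $\xi(A) < \xi(\overline{\xi(A)})$. Since $\overline{m}$ has on the order of $\log m$ subterms, and since in our block construction an expression whose code lands in block $k$ has roughly $\log$-of-block-size $\approx k$ available \enquote{room}, the efficient numeral $\overline{n}$ of a code $n$ appearing in block $k$ is a small term relative to that block; we have enough slack to guarantee that whenever $A$ gets code $n$ in block $k$, the term $\overline{n}$ (together with all of \emph{its} subterms) is inserted into a \emph{later} block, by choosing $\nub$ growing fast enough (doubly exponentially, say) that $|\overline{\nub(k)-1}|$ and its subterm count are comfortably below the gap $\nub(k+1)-\nub(k)$ and, crucially, below any code already assigned. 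One has to be a little careful: $\overline{n}$ is built from $\zero,\mathsf{S},\times,\mathsf{SS}\zero$ and the nested subterms $\mathsf{S}(\mathsf{SS}\zero\times\cdots)$, so the relevant bookkeeping is on terms of the shape $\mathsf{S}\widetilde{m}$ and their relatives, precisely the quantities controlled by Lemmas~\ref{lolligesmurf} and \ref{geestigesmurfA}. I would prove the analogue of Lemma~\ref{geestigesmurfA} — that $\mathsf{S}\widetilde m$ (and the pieces of efficient numerals) occurring in $\Lambda_k$ satisfy $m \leq k + \text{const}$ — and then the analogue of Lemma~\ref{lem:gn3isselfref}: for the distinguished $A_k = C[x := \mathsf{c}]$ (padded so that $\mathsf{c}$ is marked as a term-former in the extended language), $A_k^\ast$ is genuinely new at stage $k$, so it receives code $\nub(k)-1$; since the substituted term is $\overline{\nub(k)-1}$ by design, we get $B_{\nub(k)-1} = C[x := \overline{\nub(k)-1}]$, i.e.\ a term $t := \overline{\nub(k)-1}$ with $\mathsf{gn}_2(C[x:=t]) = \nub(k)-1 = \mathsf{ev}(t)$.

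Finally, to turn $\mathsf{ev}(t) = \mathsf{gn}_2(C[x:=t])$ into the stated provability claim $\mathsf{R}_{\mathcal{L}} \vdash t = \underline{\mathsf{gn}_2(C[x:=t])}$, I would invoke the standing fact (noted after Definition~\ref{def:attainabilityofSR}) that $\mathsf{R}_{\mathcal{L}}$ proves every true equation between a closed term and a standard numeral; since $t$ is a closed $\mathcal{L}$-term with value $\mathsf{gn}_2(C[x:=t])$, we are done. It remains only to check that the construction is effective (it manifestly is — each stage is a finite search) and that $\mathsf{gn}_2$ is injective (the no-repetitions lemma, proved exactly as before from downward-closure of the block of already-used standard numerals), and to verify M4 for \emph{all} expressions $A$, not just the diagonal ones: this is where the bulk of the estimate-chasing lives, and it is the step I expect to be the main obstacle, because one must simultaneously (a) keep $\nub$ slow enough that every expression actually appears and the blocks do not \enquote{outrun} the subexpression counts, and (b) keep $\nub$ fast enough that the efficient numeral of any assigned code is always shipped to a strictly later block than the code itself. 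Balancing these two pressures — essentially showing that $|\overline{\nub(k)-1}|^2$-type bounds sit strictly between $k+\text{const}$ and $\nub(k+1)-\nub(k)$ — is the real content; once the window is shown nonempty the rest is the same smurf-lemma machinery as in Sections~\ref{section:construction2} and~\ref{section:construction2B}.
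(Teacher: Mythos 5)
Your plan founders on exactly the obstruction you cite at the outset. In your diagonal step you substitute $\mathsf{c} := \overline{\nub(k)-1}$ and arrange that $C[x := \overline{\nub(k)-1}]$ receives code $\nub(k)-1$, so your fixed-point term is $t := \overline{n}$ with $\mathsf{gn}_2(C[x := \overline{n}]) = n = \mathsf{ev}(\overline{n})$; that is, your numbering is self-referential for efficient numerals. This is flatly incompatible with the strong monotonicity you also undertake to establish: since $x$ occurs free in $C$, we have $\overline{n} \prec C[x := \overline{n}]$, so M3 gives $\mathsf{gn}_2(\overline{n}) < n$, while M4$^\ast$($\overline{\cdot}$) applied to $A := C[x := \overline{n}]$ demands $n \leq \mathsf{gn}_2(\overline{n})$. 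This is precisely the computation in the proof of Lemma~\ref{lem:limitation1}, and no tuning of $\nub$ can open the window you describe in your last paragraph --- it is provably empty. (The paper flags this immediately after stating Lemma~\ref{lem:strongdiaglem}: one escapes Lemma~\ref{lem:limitation1} only by letting $t$ be \emph{different} from the efficient numeral of the value of $t$. Your opening sentence gestures at this, but your concrete construction then takes $t$ to be exactly that efficient numeral.)

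The missing idea is the paper's arithmetical trick: choose the block boundaries so that the top slot of block $k$ is a perfect square, $\nub(k)-1 = (2^{k+2})^2$, substitute $\mathsf{c} := \mathsf{S}\widetilde{(k+2)} = \overline{2^{k+2}}$ (the efficient numeral of the square \emph{root} $n = 2^{k+2}$, not of the code itself), and diagonalise $C$ through $A := C[x := \mathsf{c}\times\mathsf{c}]$. Then $t := \overline{n}\times\overline{n}$ is a closed term with $\mathsf{ev}(t) = n^2 = \mathsf{gn}_2(C[x:=t])$, but $t$ is not an efficient numeral, so the genuine efficient numeral $\overline{n^2}$ of the code is free to land at a much later position, and M4($\overline{\cdot}$) survives (Lemmas~\ref{lem:auxlem1} and \ref{lem:gn4isstronglymonotonic}). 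A second, smaller defect: you pad with standard numerals, but then $B_0 = \underline{0} = \zero = \overline{0}$, so $\mathsf{gn}_2(\overline{\mathsf{gn}_2(\zero)}) = \mathsf{gn}_2(\zero) = 0$ and the strict inequality in M4($\overline{\cdot}$) already fails at $\zero$; the paper uses the terms $\widehat{m}$ (iterated primes on a variable) as fillers for exactly this reason, and its proof of Lemma~\ref{lem:gn4isstronglymonotonic} explicitly uses that no filler is of the form $\overline{m}$. Your final step --- passing from the true equation to provability in $\mathsf{R}_{\mathcal{L}}$ --- is fine.
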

	
	\noindent
	We note that we escape Lemma~\ref{lem:limitation1} here by allowing $t$ to be different
	from the efficient numeral of the value of $t$.
	
	Let $\mathcal L$ be an arithmetical language, as introduced in Subsection~\ref{subsection:language}, and
	let $\mathcal L(\mathsf{c})$ be $\mathcal L$ extended with a fresh constant $\mathsf{c}$. Let $A_0,A_1,\dots$ be an
	effective enumeration of all expressions of $\mathcal L(\mathsf{c})$. We assume that if $C \preceq A_n$, then, for some $k\leq n$, we have $C=A_k$.
	
	We define $\nusu A$ as the number of subexpressions-qua-type of $A$, in other words, $\nusu A $ is the cardinality of
	$\verz{ B\mid B\preceq A}$. The following trivial observation is very useful:
	\begin{lemma}
		$\nusu {A_k} \leq k+1$.
	\end{lemma}
	
	\noindent
	Let the function $\widehat{\cdot} \colon \omega \to \mathcal{L}$ be given by setting $\widehat{0} := \mathsf{v}$ and $\widehat{n+1} := \widehat{n} '$. 
	As in the case of standard numerals, expressions of the form $\widehat m$ are downwards closed with regard to the sub-expression relation and 
	we have that $\nusu{\widehat m} = m+1$. In the following construction, we use expressions of the form $\widehat m$ instead of 
	standard numerals as fillers. This ensures that the constructed enumeration does not start with $\overline 0$ and thus that the resulting numbering is strongly monotonic.
	
	Here is our construction.
	
	\[
	\fcolorbox{lightgray}{lightgray}{\begin{minipage}{33em}We construct a list $\Lambda :=(B_n)_{n\in \omega}$ in stages $k$. Let $\nub(k) := 2^{2k+4}+1$.
		Each stage $k$ will result in a list $\Lambda_k = B_0,\dots, B_{\nub(k)-1}$.
		To simplify the presentation, we make $\Lambda_{-1}$ the empty list and $\nub(-1) := 0$.
		
		\medskip
		In stage $k$, we act as follows. Let $A_k^\ast := A_k [{\sf c} := {\sf S}\widetilde{(k+2)}]$.
		Let the sub-expressions of $A_k^\ast$ that do not occur in $\Lambda_{k-1}$ and are not of the form $\widehat m$
		be $A_{i_0},\dots, A_{i_{\ell-1}}$,
		where the sequence $i_j$ is strictly increasing.
		We note that $\ell$ could be 0. We define $B_{\nub(k)-\ell +j} := A_{i_j}$, for $j <\ell $.
		Let $s$ be the smallest number such that $\widehat s$ is not in  $\Lambda_{k-1}$.
		We set $B_{\nub(k-1)+p} := \widehat{s+p}$, for $p< \nub(k)-\nub(k-1)-\ell$.
		\end{minipage}
	}
	\]
	
	\bigskip\noindent
	To see that our construction is well-defined it is sufficient that $\nub(k)-\ell \geq \nub(k-1)$ or, equivalently,
	$\ell \leq \nub(k)-\nub(k-1)$.
	We note that:
	\begin{equation}
	\label{equ:boundonell}
	\tag{$\ast$}
	\ell \leq \nusu {A_k^\ast} \leq \nusu{A_k}+\nusu{{\sf S}\widetilde {(k+2)}}-1  \leq k +1  + 2(k+2)+3 = 3 k + 8.
	\end{equation}
	(The $-1$ in the right-hand-side of the second inequality can be seen as follows. In case {\sf c} occurs in $A_k$ it is subtracted
	in the substitution. If {\sf c} does not occur, we have $A_k = A_k^\ast$ and, from this, the inequality follows immediately.)
	
	If $k=0$, we have $3\cdot 0  +8 = 8 < 17 = 2^{2(0+2)}+1-0 = \nub(0) - \nub(-1)$.
	Let $k>0$. We find:
	\[3k+8  < 3 \cdot 2^{2k+2} = 2^{2k+4}+1 - 2^{2k+2}-1 = \nub(k)-\nub(k-1).\]

	\begin{lemma}\label{lem:AkinLambdak}
		Suppose $A_k \in \mathcal L$. Then, $A_k$ is in $\Lambda_k$ and, hence, in $\Lambda$.
	\end{lemma}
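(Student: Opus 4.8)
The plan is to transcribe the proof of Lemma~\ref{hippiesmurfA}, adjusting only the modulus $\nub$ and the fact that the fillers here are the expressions $\widehat m$ rather than standard numerals. First I would note that, since $A_k\in\mathcal L$, the fresh constant $\mathsf c$ does not occur in $A_k$, so $A_k^\ast = A_k$, and hence it suffices to show $A_k\in\Lambda_k$. I then split into three cases. If $A_k$ already occurs in $\Lambda_{k-1}$, we are done. If $A_k\notin\Lambda_{k-1}$ and $A_k$ is not of the form $\widehat m$, then $A_k$ is one of the sub-expressions $A_{i_0},\dots,A_{i_{\ell-1}}$ that stage~$k$ appends as the $B_{\nub(k)-\ell+j}$, so $A_k\in\Lambda_k$ again.

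The remaining case is $A_k=\widehat m$. Here I would invoke the trivial bound $\nusu{A_k}\leq k+1$ together with the identity $\nusu{\widehat m}=m+1$ recorded just above the construction, which gives $m\leq k$. Since every sub-expression of $\widehat m$ has the form $\widehat{m'}$ with $m'\leq m$, stage~$k$ produces no new non-filler sub-expression, i.e.\ $\ell=0$; hence stage~$k$ appends exactly the fillers $\widehat s,\widehat{s+1},\dots,\widehat{s+\nub(k)-\nub(k-1)-1}$, where by the choice of $s$ every $\widehat{m'}$ with $m'<s$ already lies in $\Lambda_{k-1}$. If $m<s$ then $\widehat m\in\Lambda_{k-1}$; otherwise $s\leq m$, and since $m\leq k\leq\nub(k)-\nub(k-1)-1$ — using the estimate $k+1<\nub(k)-\nub(k-1)$ verified in the well-definedness paragraph (which for $k=0$ reads $1<17$ and for $k>0$ follows from $k+1\leq 3k+8<3\cdot 2^{2k+2}$) — the index $m$ falls inside the filler range $[\,s,\;s+\nub(k)-\nub(k-1)-1\,]$, so $\widehat m$ is appended at stage~$k$. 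In every case $A_k\in\Lambda_k\subseteq\Lambda$.

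I do not anticipate any genuine obstacle: the whole argument is a routine adaptation of Lemma~\ref{hippiesmurfA}. The only points worth double-checking are that the filler family $\widehat{\cdot}$ is indeed downward closed under $\preceq$ with $\nusu{\widehat m}=m+1$ (both asserted when $\widehat{\cdot}$ is introduced, and used to force $\ell=0$ and to guarantee that the sub-expressions of $\widehat m$ with index below $s$ are already present), and that the enlarged modulus $\nub(k)=2^{2k+4}+1$ still dominates $k+1$ — precisely the computation carried out immediately before the statement of the lemma.
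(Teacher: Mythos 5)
Your proposal is correct and follows essentially the same route as the paper's own proof: the same three-way case split, the same use of $\nusu{\widehat m}=m+1\leq k+1$ to force $\ell=0$, and the same appeal to $k+1<\nub(k)-\nub(k-1)$ to place $\widehat m$ inside the filler range. The only difference is that you spell out the filler-range bookkeeping slightly more explicitly than the paper does.
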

	
	\begin{proof}
		Consider stage $k$. We note that $A_k^\ast = A_k$. In case $A_k$ occurs in $\Lambda_{k-1}$, we are done.
		In case $A_k$ is not in $\Lambda_{k-1}$ and not of the form $\widehat m$, clearly, $A_k$ will be added, so we are again done.
		Suppose $A_k = \widehat m$, for some $m$. It follows that $m+1=\nusu{\widehat m} \leq k+1$.
		We note that all sub-expressions of $A_k$ are of the form $\widehat {n}$, for $n\leq m$. So $\ell = 0$.
		Clearly, $m+1 \leq k+1 < \nub(k) - \nub(k-1)$. So, all sub-expressions of $A_k$ are either in $\Lambda_{k-1}$ or will be added.
	\end{proof}
	
	\begin{lemma}\label{lem:monoton}
		Suppose $C \preceq B_n$. Then, for some $j\leq n$, we have $C = B_j$.
	\end{lemma}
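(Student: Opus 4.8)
The plan is to transcribe the proof of Lemma~\ref{neerslachtigesmurfA} almost verbatim, with the auxiliary expressions $\widehat m$ now playing the role that the standard numerals $\underline m$ played there. The two features of the fillers that made that argument work are still in force: expressions of the form $\widehat m$ are downwards closed under $\preceq$, and $\nusu{\widehat m}=m+1$. One further elementary observation will smooth things: the substitution term ${\sf S}\widetilde{(k+2)}$ is variable-free, so no expression of the form $\widehat m$ occurs in it (the analogue of the remark, in the ${\sf gn}_0$ case, that $\widetilde{(k+4)}$ introduces no fresh standard numerals beyond $\underline 0,\underline 1,\underline 2$).

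I would fix $C\preceq B_n$, record the stage $k$ at which $B_n$ is added, and split according to whether $B_n$ is placed as a filler $\widehat m$ or as one of the subexpressions $A_{i_0},\dots,A_{i_{\ell-1}}$ of $A_k^\ast$ that are neither in $\Lambda_{k-1}$ nor of filler form. In the filler case $B_n=\widehat m$, downward closure gives $C=\widehat{m'}$ with $m'\le m$; writing $s$ for the least number with $\widehat s\notin\Lambda_{k-1}$, the construction leaves $\widehat 0,\dots,\widehat{s-1}$ in $\Lambda_{k-1}$ and appends $\widehat s,\widehat{s+1},\dots$ in ascending order at stage $k$, placing $\widehat m$ at index $\nub(k-1)+(m-s)=n$. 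Hence $\widehat{m'}$ lies in $\Lambda_{k-1}$ if $m'<s$, and is appended at stage $k$ at index $\nub(k-1)+(m'-s)\le n$ if $s\le m'\le m$; either way $C=B_j$ with $j\le n$.

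In the subexpression case, say $B_n=A_{i_a}$, I would deal first with $C=\widehat p$ (the one slightly delicate point). Since ${\sf S}\widetilde{(k+2)}$ is variable-free, $\widehat p\preceq A_k^\ast$ forces $\widehat p\preceq A_k$, and $\widehat p\neq A_k$ (they differ already if ${\sf c}$ occurs in $A_k$, and otherwise $A_k=A_k^\ast$, so $\widehat p=A_k$ would make $B_n$ of filler form, against the case); thus $\widehat p\prec A_k$, whence the closure property of the enumeration gives $\widehat p=A_i$ for some $i<k$, and Lemma~\ref{lem:AkinLambdak} yields $\widehat p\in\Lambda_i\subseteq\Lambda_{k-1}$, which has index $<\nub(k-1)\le n$ since $B_n$ is appended only at stage $k$. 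If $C$ is not of the form $\widehat p$, then $C\preceq A_k^\ast$ and $C$ is not a filler, so $C$ is either already in $\Lambda_{k-1}$ (index $<\nub(k-1)\le n$) or one of the $A_{i_b}$; in the latter subcase $C\preceq B_n=A_{i_a}$ together with the strict increase of $c\mapsto i_c$ forces $i_b\le i_a$, hence $b\le a$, hence $C=B_{\nub(k)-\ell+b}$ has index $\le n$.

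The only spot that requires more than bookkeeping is the subcase $C=\widehat p$, where one must argue that $\widehat p$ has already appeared by stage $k-1$; this rests on the variable-freeness of the substitution term (to pull $\widehat p\preceq A_k^\ast$ back to $\widehat p\preceq A_k$), on the subexpression-closure of the enumeration $A_0,A_1,\dots$, and on Lemma~\ref{lem:AkinLambdak}. Everything else is a direct copy of the corresponding steps in the proof of Lemma~\ref{neerslachtigesmurfA}.
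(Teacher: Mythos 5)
Your proof is correct and follows the paper's own argument for this lemma essentially step for step (the paper's proof is itself a near-verbatim transfer of the proof of Lemma~\ref{neerslachtigesmurfA}, with $\widehat m$ in place of $\underline m$). The extra detail you supply in the sub-case $C=\widehat p$ --- pulling $\widehat p\preceq A_k^\ast$ back to $\widehat p\prec A_k$ via the variable-freeness of ${\sf S}\widetilde{(k+2)}$, then invoking the closure of the enumeration and Lemma~\ref{lem:AkinLambdak} --- is exactly what the paper's terse inequality $\nusu{\widehat p}<\nusu{A_k}\leq k+1$ is standing in for, so the two arguments coincide in substance.
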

	
	\begin{proof}
		Let $C \preceq B_n$.
		Suppose $B_n$ is added in stage $k$. Then, $B_{n}$ is either of the form $\widehat m$ or a sub-expression of $A^\ast_k$ not of the form $\widehat m$.
		
		Suppose $B_n = \widehat m$.
		Let $s$ be the smallest number such that $\widehat s$ is not in  $\Lambda_{k-1}$.
		For all $u< s$, we have $\widehat u\in \Lambda_{k-1}$. By our construction, all $\widehat v$, for
		$s\leq v \leq m$ are added at stage $k$ in ascending order. So, for all $w \leq m$, we find that $\widehat w$ precedes
		$\widehat m$ in $ \Lambda_k$. Finally, each sub-expression $C$ of $\widehat m$ is of the form $\widehat w$, for
		some $w \leq m$.
		
		Suppose $B_n$ is added as a sub-expression of $A^\ast_k$ not of the form $\widehat m$. First suppose $C= \widehat p$.
		We note that $p+1 = \nusu{\widehat p} < \nusu{A_k} \leq k+1$. This tells us, by Lemma~\ref{lem:AkinLambdak},
		that $\widehat p$ is in $\Lambda_{k-1}$.
		Hence, it is added to $\Lambda$ before $B_n$.  Now suppose that $C$ is not of the form $\widehat p$. 
		Then, either $C$ is in $\Lambda_{k-1}$ or added to $\Lambda_k$ before $B_n$.
	\end{proof}
	
	\begin{lemma}
		The enumeration $\Lambda$ is without repetitions.
	\end{lemma}
	
	\begin{proof}
		Consider any $C \in \mathcal L$. If $C$ is not of the form $\widehat m$ by our construction it will be only added once.
		
		Suppose $ C = \widehat m$. We note that, by Lemma~\ref{lem:monoton}, the $n$ such that $\widehat n \in \Lambda_{k-1}$
		are downwards closed. This means that, in our construction, there is no $m\geq s$, such that $\widehat m \in \Lambda_{k-1}$.
		So all the $\widehat m$ that are added in any stage $k$ are new.
	\end{proof}
	
	\begin{lemma}\label{lem:Stermupperboundonk}
		Suppose ${\sf S}\widetilde n$ occurs in $\Lambda_k$, then $n \leq k+2$.
	\end{lemma}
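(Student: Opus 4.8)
The plan is to run the induction on the stage index $k$ that underlies the analogous bounds for the earlier constructions (Lemma~\ref{geestigesmurfA} for ${\sf gn}_0$, Lemma~\ref{geestigesmurfB} for ${\sf gn}_1$); only the bookkeeping changes, since here the term substituted for ${\sf c}$ at stage $k$ is ${\sf S}\widetilde{(k+2)}$ and the fillers are the variable-expressions $\widehat m$, which contain no occurrence of ${\sf S}$ at all.

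For the base case $k=0$, note that $\Lambda_0$ consists of the non-filler sub-expressions of $A_0^\ast = A_0[{\sf c} := {\sf S}\widetilde 2]$ together with fillers $\widehat s$; since $A_0$ has no proper sub-expressions, any sub-expression of $\Lambda_0$ of the form ${\sf S}\widetilde n$ occurs inside the inserted term ${\sf S}\widetilde 2$, whose only ${\sf S}\widetilde{(\cdot)}$-subterms are ${\sf S}\widetilde 2$ and ${\sf S}\zero = {\sf S}\widetilde 0$, so $n \le 2$. For the inductive step with $k>0$: the part of $\Lambda_k$ inherited from $\Lambda_{k-1}$ satisfies $n \le (k-1)+2 \le k+2$ by the inductive hypothesis; the fillers $\widehat p$ added at stage $k$ contribute no sub-expression of the shape ${\sf S}\widetilde n$; and for ${\sf S}\widetilde n \preceq A_k^\ast$ one has either ${\sf S}\widetilde n \preceq A_k$, whence $n < \nusu{{\sf S}\widetilde n} \le \nusu{A_k} \le k+1$ by the trivial bound $\nusu{A_k}\le k+1$ and so $n \le k$, or ${\sf S}\widetilde n$ lies inside the copy of ${\sf S}\widetilde{(k+2)}$ inserted for ${\sf c}$, in which case (as in the base case) $n \le k+2$, with $n=k+2$ realised by the copy itself. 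Taking the maximum over these sources gives $n \le k+2$.

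The only step that really warrants attention is the exhaustiveness of this dichotomy, i.e.\ that an ${\sf S}\widetilde n$-sub-expression of $A_k^\ast$ cannot straddle the boundary between the original material of $A_k$ and an inserted copy of ${\sf S}\widetilde{(k+2)}$. In the expression-based setting this is automatic: a sub-expression is a subtree and each inserted copy is downward-closed, so a subtree of $A_k^\ast$ either lies inside some inserted copy or arises by applying the substitution to a subtree of $A_k$; in the latter case it contains, as subtrees, the copies hanging below its root, so if it has the form ${\sf S}\widetilde n$ --- a term with no proper subterm of the form ${\sf S}\widetilde m$ for $m>0$ --- it contains no copy and is therefore a genuine sub-expression of $A_k$. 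This is exactly the point where the string version of the argument instead has to invoke the overlap analysis of Lemma~\ref{grumpysmurf}; in the present construction it costs nothing, and the remainder is routine.
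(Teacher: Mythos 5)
Your proof is correct and follows essentially the same route as the paper's: induction on the stage $k$, with the inherited part handled by the inductive hypothesis, the fillers $\widehat p$ dismissed as ${\sf S}$-free, and the case split for ${\sf S}\widetilde n \preceq A_k^\ast$ into ${\sf S}\widetilde n \preceq A_k$ (giving $n<\nusu{A_k}\leq k+1$) versus arising from the substituted ${\sf S}\widetilde{(k+2)}$. The only difference is that you explicitly justify the exhaustiveness of that dichotomy via the subtree/downward-closure observation, a point the paper's proof leaves implicit.
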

	
	\begin{proof}
		We prove this by induction on $k$. In stage 0, we easily verify that the largest term of the form $\mathsf{S}\widetilde n$ can be at most
		$\mathsf{S} \widetilde 2$.
		
		Suppose, we have our desired estimate for $k-1$, where $k>0$. We prove our estimate for $k$.
		Clearly, the $\mathsf{S}\widetilde n$ occurring in $\Lambda_{k-1}$ do satisfy our estimate.
		Moreover, all the $\widehat p$ added in stage $k$ do not provide new elements of the form $\mathsf{S}\widetilde n$.
		So, the only interesting case is the case where $A_k$ is not of the form $\widehat p$. Suppose it is not.
		The ${\sf S}\widetilde n$ that are added are all sub-expressions of $ A^\ast_{k}= A_k[{\sf c} := {\sf S}\widetilde{(k+2)}]$. So, we should focus on the largest
		such sub-term.
		Suppose ${\sf S}\widetilde n \preceq A^\ast_k$.
		There are two possibilities:
		\begin{enumerate}[i.]
			\item ${\sf S}\widetilde n\preceq A_k$. In this case, we have $n < \nusu{\mathsf{S}\widetilde n} \leq \nusu{A_k} \leq k+1 \leq k +2$.
			\item
			$n= k+2$, since the sub-term results from the substitution for {\sf c}.
		\end{enumerate}
		In each of the possible cases, we are done.
	\end{proof}
	
	Setting ${\sf gn}_2(A)$ to be the unique $n$ such that $A = B_n$, we have found a monotonic G\"odel numbering ${\sf gn}_2$. 
	
	In order to show that ${\sf gn}_2$ satisfies M4($\overline{\cdot}$), i.e., that ${\sf gn}_2$ is strongly monotonic for efficient numerals, we prove the following auxiliary lemma.
	
	\begin{lemma}
		\label{lem:auxlem1}
		Suppose $\overline{p} \preceq A_k[\mathsf{c} := \overline r]$, for any $p,r \in \omega$. Then,
		$p < 2^{k}(r+2)$.
	\end{lemma}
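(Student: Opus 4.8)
The plan is to isolate first a structural observation about efficient numerals and then to split on where $\overline p$ sits inside $A_k[\mathsf{c} := \overline r]$. For the preliminary: for $q \geq 1$ the term $\overline q$ begins with ${\sf S}$ and so has a unique immediate subterm, and a short induction using the two productions ${\sf S}({\sf SS}\zero \times \alpha)$ and ${\sf SS}({\sf SS}\zero \times \alpha)$ shows that the efficient numerals occurring in $\overline q$ form a $\prec$-chain $\overline q = \overline{q_0} \succ \overline{q_1} \succ \dots \succ \overline{q_M} = \zero$ (each nontrivial member occurring at a single node on the spine of $\overline q$), with $q_i = 2q_{i+1}+1$ when $q_i$ is odd (the first production) and $q_i = q_{i+1}+1$ when $q_i$ is even (the second); so $q_i + 1 \leq 2(q_{i+1}+1)$ in either case. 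I would use two consequences repeatedly: (a) $q+1 \leq 2^M(q_M+1) = 2^M$, and since the chain consists of $M+1$ distinct subexpressions of $\overline q$ this gives the crude bound $q < 2^{\nusu{\overline q}}$; (b) if $r \geq 1$ and $\overline r$ is a subexpression of $\overline q$, then $r = q_t$ for a unique $t$, the sole occurrence of $\overline r$ in $\overline q$ roots this $\overline{q_t}$, and iterating $q_i + 1 \leq 2(q_{i+1}+1)$ yields $q+1 \leq 2^t(r+1)$.

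Next I would fix $\overline p \preceq A_k[\mathsf{c} := \overline r]$ and note that every subexpression of $A_k[\mathsf{c}:=\overline r]$ either is a subexpression of one of the substituted copies of $\overline r$ — in which case $\overline p \preceq \overline r$ and so $p \leq r < 2^k(r+2)$ — or else equals $B[\mathsf{c}:=\overline r]$ for some $B \preceq A_k$, so that $\nusu B \leq \nusu{A_k} \leq k+1$. In the latter case there are two easy sub-cases. If $\mathsf{c}$ does not occur in $B$ then $\overline p = B \preceq A_k$, and (a) gives $p < 2^{\nusu{\overline p}} \leq 2^{\nusu B} \leq 2^{k+1} \leq 2^k(r+2)$. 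If $r = 0$ then $\overline p = B[\mathsf{c}:=\zero]$, and since every subexpression of $\overline p$ is the image of one of $B$ under $\mathsf{c}\mapsto\zero$, again $\nusu{\overline p} \leq \nusu B \leq k+1$ and $p < 2^{k+1} \leq 2^k(r+2)$.

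This leaves the case $\overline p = B[\mathsf{c}:=\overline r]$ with $\mathsf{c}$ occurring in $B$ and $r \geq 1$, which carries the real content. Since $\mathsf{c}$ occurs in $B$, the term $\overline p$ contains a copy of $\overline r$; applying (b) with $\overline q := \overline p$, this copy roots the member $\overline{p_t}$ of the chain $\overline p = \overline{p_0} \succ \overline{p_1} \succ \dots$ with $p_t = r$, whence $p+1 \leq 2^t(r+1)$. To bound $t$, I would observe that $\overline{p_0},\dots,\overline{p_t}$ are rooted at $t+1$ nested nodes along the path in $\overline p$ from its root down to that copy of $\overline r$; because substitution leaves the syntax tree unchanged at and above the substituted occurrence of $\mathsf{c}$, these $t+1$ nodes already occur in $B$, and the subtrees of $B$ they carry are $t+1$ distinct subexpressions of $B$. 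Hence $t+1 \leq \nusu B \leq k+1$, i.e.\ $t \leq k$, so $p < p+1 \leq 2^t(r+1) \leq 2^k(r+1) < 2^k(r+2)$, as desired.

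The main obstacle is the combinatorial bookkeeping: establishing the preliminary observation (keeping the odd and even productions of the efficient-numeral grammar apart, and checking that for $r \geq 1$ a copy of $\overline r$ cannot be buried in the fixed factor ${\sf SS}\zero$, whose only efficient-numeral subexpression is $\zero = \overline 0$), and then matching the nesting of efficient numerals in $\overline p$ with the nesting of subexpressions in $B$ to read off $t \leq k$. The small cases $r\in\{0,1\}$ warrant a glance, but $r = 0$ has been split off already and $r = 1$ is harmless since $\overline 1$ is not a subexpression of ${\sf SS}\zero$.
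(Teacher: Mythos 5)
Your proof is correct and follows essentially the same route as the paper's: the same three-way case split (subexpression of $A_k$, subexpression of $\overline r$, or $B[\mathsf{c}:=\overline r]$ with $\mathsf{c}$ in $B$), and in the main case the same key idea of viewing $\overline p$ as a stack of efficient-numeral constructors above the substituted copy of $\overline r$ and bounding the height of that stack by $\nusu{A_k}\leq k+1$. The only cosmetic difference is that you iterate the one-step inequality $q_i+1\leq 2(q_{i+1}+1)$ along the chain, whereas the paper writes out the value of $t[\mathsf{c}:=\overline r]$ explicitly as a binary expansion; both yield $p<2^{k}(r+2)$.
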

	
	\begin{proof}
		Let $p,r \in \omega$ be given such that $\overline{p} \preceq A_k[\mathsf{c} := \overline r]$. Then (i) $\overline{p} \preceq A_k$ or (ii)
		$\overline{p}\preceq \overline r$ or (iii) there exists $t \preceq A_k$ containing 
		$\mathsf{c}$ such that $\overline p = t[\mathsf{c} := \overline r]$. In case (i), we have, $p < 2^{\nusu{\overline p}} \leq 2^{\nusu{A_k}} \leq 2^{k+1} \leq 2^{k}(r+2)$. In case (ii), we have $p\leq r$.
		So both in Case (i) and (ii), we find $p< 2^{k}(r+2)$.
		
		Suppose we are are in case (iii).
		Since $\overline{r}$ and $t[\mathsf{c} := \overline r]$ are efficient numerals, there exist $k,j$ such that $\overline{r}$ and $t$ are of the following forms:
		\begin{itemize}
			\item $\overline{r} = \mathsf{S}_{a_0}( \cdots \mathsf{S}_{a_k}(\zero) \cdots )$;
			\item $t = \mathsf{S}_{c_0}( \cdots \mathsf{S}_{c_j}(\mathsf{c}) \cdots )$;
		\end{itemize}
		where $a_p, c_q \in \{ 1, 2 \}$ for each $p \leq k$ and $q \leq j$, and $\mathsf{S}_1(x) := \mathsf{S} (\mathsf{S}\mathsf{S} \mathsf{0} \times x)$ and 
		$\mathsf{S}_2(x) := \mathsf{S} \mathsf{S} (\mathsf{S}\mathsf{S} \mathsf{0} \times x)$. Moreover, $\mathsf{ev}(\overline{r}) = \sum^k_{i = 0} a_i b^i$. Since
		\[
		t(\overline{r}) = \mathsf{S}_{c_0}( \cdots \mathsf{S}_{c_j}(\mathsf{S}_{a_0}( \cdots \mathsf{S}_{a_k}(\mathsf{0}) \cdots )) \cdots )
		\]
		we have
		\begin{align*}
		p & = \mathsf{ev}(t(\overline{r}))\\
		& = \sum^j_{i = 0} c_i 2^i + \sum^k_{i = 0} a_i 2^{i + j +1}\\
		& = \sum^j_{i = 0} c_i 2^i + 2^{j +1} \sum^k_{i = 0} a_i 2^{i}\\
		&  < 2^{j+2} + 2^{j +1} r\\
		& = 2^{j+1}(r+2).
		\end{align*}
		For each $i \leq j$, the expression $\mathsf{S}_{c_i}( \cdots \mathsf{S}_{c_{j}}(\mathsf{c}) \cdots )$ is a subterm of $A_k$. Since also $\mathsf{c}$ is a subterm of $A_k$, we have $j+2 \leq \nusu{A_k} \leq k+1$. We thus conclude that $p < 2^{k}(r+2)$.
	\end{proof}
	
	We now show that ${\sf gn}_2$ satisfies M4($\overline{\cdot}$).
	
	\begin{lemma}
		\label{lem:gn4isstronglymonotonic}
		$m < {\sf gn}_2( \overline{m} )$ for all $m \in \omega.$
	\end{lemma}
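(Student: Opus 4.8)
The plan is to locate precisely the stage $k$ at which the efficient numeral $\overline m$ is first entered into the list $\Lambda$, and then to play off a \emph{lower} bound on the index it receives against the \emph{upper} bound on $m$ supplied by Lemma~\ref{lem:auxlem1}.

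First I would fix $m \in \omega$ and let $k$ be the stage at which $\overline m$ is added to $\Lambda$; such a $k$ exists because $\overline m$ is an $\mathcal L$-expression and hence occurs in $\Lambda$ by Lemma~\ref{lem:AkinLambdak}. Since an efficient numeral is never of the form $\widehat p$, the element $\overline m$ cannot be one of the fillers inserted at stage $k$; it is therefore one of the new subexpressions $A_{i_0},\dots,A_{i_{\ell-1}}$ of $A_k^\ast = A_k[\mathsf c := \mathsf S\widetilde{(k+2)}]$ that do not already occur in $\Lambda_{k-1}$, and so it receives an index in the block $\{\nub(k)-\ell,\dots,\nub(k)-1\}$. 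Using the bound $\ell \le 3k+8$ from~\eqref{equ:boundonell}, this yields
\[
{\sf gn}_2(\overline m) \;\ge\; \nub(k) - \ell \;\ge\; \nub(k) - (3k+8) \;=\; 2^{2k+4} - 3k - 7 .
\]

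For the matching upper bound on $m$, the point to notice is that $\mathsf S\widetilde{(k+2)}$ is itself an efficient numeral: for $k \ge 0$ it has the shape $\mathsf{SS}(\mathsf{SS}\zero \times \widetilde{k+1})$, and by Lemma~\ref{lolligesmurf} its value is $\mathsf{ev}(\widetilde{k+2})+1 = 2^{k+2}$, so $\mathsf S\widetilde{(k+2)} = \overline{2^{k+2}}$. Hence $A_k^\ast = A_k[\mathsf c := \overline{2^{k+2}}]$, and since $\overline m \preceq A_k^\ast$, Lemma~\ref{lem:auxlem1} applied with $p := m$ and $r := 2^{k+2}$ gives
\[
m \;<\; 2^{k}\bigl(2^{k+2}+2\bigr) \;=\; 2^{2k+2} + 2^{k+1}.
\]

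It then remains only to verify the elementary inequality $2^{2k+2} + 2^{k+1} \le 2^{2k+4} - 3k - 7$ for every $k \ge 0$ (equivalently $3\cdot 2^{2k+2} - 2^{k+1} \ge 3k+7$, which already holds at $k=0$ and is then swamped by the exponential term); combining the two displays gives $m < {\sf gn}_2(\overline m)$, as desired. I expect the only genuinely delicate point to be the observation that the term $\mathsf S\widetilde{(k+2)}$ substituted for $\mathsf c$ in the construction is an efficient numeral of value $2^{k+2}$, since this is exactly what licenses the appeal to Lemma~\ref{lem:auxlem1}; everything else is routine bookkeeping with the stage-length function $\nub$.
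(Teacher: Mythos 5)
Your proof is correct and follows essentially the same route as the paper's: identify the stage $k$ at which $\overline{m}$ enters $\Lambda$, use the fact that it is not a filler together with the bound $\ell \le 3k+8$ to get ${\sf gn}_2(\overline m) \ge 2^{2k+4}-3k-7$, and apply Lemma~\ref{lem:auxlem1} with $r = 2^{k+2}$ (noting ${\sf S}\widetilde{(k+2)} = \overline{2^{k+2}}$) for the upper bound on $m$. Your intermediate arithmetic $2^k(2^{k+2}+2) = 2^{2k+2}+2^{k+1}$ is in fact more careful than the paper's, which states this quantity as $3\cdot 2^{k+1}$ (evidently a slip for a bound like $3\cdot 2^{2k+1}$); the final comparison goes through either way.
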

	
	\begin{proof}
		Let $\overline m$ be added in stage $k$, i.e., $\overline m \in \Lambda_k \setminus \Lambda_{k-1}$. We then have
		$B_{\nub(k) - \ell + j} = \overline{m}$, for some $j < \ell$. (We use here the fact that no filler is of the form $\overline{m}$). We have $\ell \leq 3k + 8$, as noted in (\ref{equ:boundonell}) subsequent to presenting our construction. Hence,
		\begin{equation*}
		2^{2k+4}  - 3k - 7 = \nub(k) - (3k + 8) \leq \nub(k) - \ell \leq {\sf gn}_2( \overline{m} ).
		\end{equation*}
		We moreover have $\overline{m} \preceq A_k [{\sf c} := {\sf S}\widetilde{(k+2)}]$. Since ${\sf S}\widetilde{(k+2)} = \overline{2^{k+2}}$ we get
		\begin{equation*}
		m < 2^{k} (2^{k+2}+2) =  3 \cdot 2^{k+1},
		\end{equation*}
		by Lemma~\ref{lem:auxlem1}. It is easy to check that $3 \cdot 2^{k+1} < 2^{2k+4}  - 3k - 7$. Hence, $m < {\sf gn}_2( \overline{m} )$.
	\end{proof}
	
	Even though ${\sf gn}_2$ cannot be self-referential for efficient numerals by Lemma~\ref{lem:limitation1}, the following modification holds.
	
	\begin{lem}
		\label{lem:prediaglem}
		Consider any $A$ such that {\sf c} occurs in $A$. Then there exists $n$ such that ${\sf gn}_2(A[{\sf c}:= \overline n]) = n^2$.
	\end{lem}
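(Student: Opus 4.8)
The plan is to run, essentially verbatim, the argument of Lemma~\ref{lem:gn3isselfref} (the self-referentiality proof for ${\sf gn}_0$), keeping careful track of the exponents that were rigged into the construction of ${\sf gn}_2$. Write $A = A_k$ for the index $k$ at which $A$ appears in the fixed enumeration of $\mathcal L(\mathsf c)$-expressions, and set $n := 2^{k+2}$. The first thing to record is the identity $\overline{n} = \mathsf{S}\widetilde{(k+2)}$ of \emph{terms}: by Lemma~\ref{lolligesmurf} we have ${\sf ev}(\widetilde m) = 2^m - 1$, so ${\sf ev}(\mathsf{S}\widetilde m) = 2^m$, and unwinding the definition, $\mathsf{S}\widetilde{(k+2)} = \mathsf{SS}(\mathsf{SS}\zero \times \widetilde{(k+1)})$ is literally the efficient numeral of $2^{k+2}$ (the same remark already used for ${\sf gn}_0$). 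Consequently $A[\mathsf c := \overline n] = A_k[\mathsf c := \mathsf{S}\widetilde{(k+2)}] = A_k^\ast$.

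Next I would show that ${\sf gn}_2(A_k^\ast) = \nub(k) - 1$, exactly as in the proof of Lemma~\ref{lem:gn3isselfref}. Since $\mathsf c$ occurs in $A_k$, the term $\mathsf{S}\widetilde{(k+2)}$ is a sub-expression of $A_k^\ast$; by Lemma~\ref{lem:Stermupperboundonk} every term $\mathsf{S}\widetilde m$ occurring in $\Lambda_{k-1}$ satisfies $m \leq (k-1)+2 < k+2$, so $\mathsf{S}\widetilde{(k+2)}$ does not occur in $\Lambda_{k-1}$, and hence, by the downward closure provided by Lemma~\ref{lem:monoton}, neither does $A_k^\ast$ (for $k = 0$ this is trivial, $\Lambda_{-1}$ being empty). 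Moreover $A_k^\ast$ is not a filler, since every $\widehat m$ is a string of primes on $\mathsf v$ and contains no $\mathsf{S}$, whereas $A_k^\ast$ contains $\mathsf{S}\widetilde{(k+2)}$. As every sub-expression of $A_k^\ast$ occurs no later than $A_k^\ast$ itself in the enumeration $A_0, A_1, \dots$, and $A_k^\ast$ is an eligible sub-expression, it is the last item $A_{i_{\ell-1}}$ added in stage $k$, so $B_{\nub(k)-1} = A_k^\ast$, i.e. ${\sf gn}_2(A_k^\ast) = \nub(k) - 1$.

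Combining the two steps with $\nub(k) = 2^{2k+4}+1$ finishes the proof: ${\sf gn}_2(A[\mathsf c := \overline n]) = \nub(k) - 1 = 2^{2k+4} = (2^{k+2})^2 = n^2$. I do not anticipate a real obstacle here: the computation is a transcription of the ${\sf gn}_0$ case, and the only genuinely load-bearing facts are (a) the term identity $\overline{2^{k+2}} = \mathsf{S}\widetilde{(k+2)}$, which pins down the unique eligible value $n = 2^{k+2}$, and (b) the numerology $\nub(k) - 1 = 2^{2k+4} = n^2$, which is exactly why the exponents $2k+4$ in $\nub$ and $k+2$ in the substituted term were chosen as they are. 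If anything deserves a moment's care it is checking that $\mathsf{S}\widetilde{(k+2)}$ really is an efficient numeral (it is, as $k+2 \geq 1$) and that it is precisely the term substituted for $\mathsf c$ at stage $k$.
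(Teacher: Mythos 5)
Your proposal is correct and follows essentially the same route as the paper's own proof: reduce everything to showing that $A_k^\ast$ is not in $\Lambda_{k-1}$ via Lemma~\ref{lem:Stermupperboundonk}, conclude $B_{\nub(k)-1}=A_k^\ast$, and then exploit the identities $\mathsf{S}\widetilde{(k+2)}=\overline{2^{k+2}}$ and $\nub(k)-1=(2^{k+2})^2$. The only difference is that you spell out in more detail why $A_k^\ast$ ends up in the last slot of stage $k$, which the paper leaves implicit.
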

	
	\begin{proof}
		Suppose {\sf c} occurs in $A$. Let $A := A_k$. The only thing we have to show is that $A^\ast_k$ is not in $\Lambda_{k-1}$.
		This is certainly true for $k=0$.
		Suppose $k>0$. We note that ${\sf S}\widetilde {(k+2)}$ occurs in $A^\ast_k$. So, it is sufficient to show that
		${\sf S}\widetilde {(k+2)}$ does not occur in $\Lambda_{k-1}$.
		By Lemma~\ref{lem:Stermupperboundonk}, whenever $\mathsf{S}\widetilde n$ occurs in $\Lambda_{k-1}$, we have
		$n \leq k+1$. We may conclude that
		$B_{\nub(k)-1} = A_k[\mathsf{c} := \mathsf{S}(\widetilde{k+2})]$.
		Let $n := 2^{k+2}$. Since $n^2 = \nub(k)-1$ and $\mathsf{S}(\widetilde{k+2}) = \overline{n}$, we are done.
	\end{proof}
	
	We now can immediately derive the Strong Diagonal Lemma.
	
	\begin{proof}[Proof of Lemma~\ref{lem:strongdiaglem}.]
		Let $C(x)$ be given and set $A := C[x := \mathsf{c} \times \mathsf{c}]$. By Lemma~\ref{lem:prediaglem} there exists $n \in \omega$ such that ${\sf gn}_2(A[\mathsf{c} := \overline{n}]) = n^2$. Set $t := \overline{n} \times \overline{n}$. We then have $A[\mathsf{c} := \overline{n}] = C[x := t]$. Moreover $\mathsf{R}_{\mathcal{L}} \vdash t = \underline{n^2}$. Hence ${\mathsf{R}_{\mathcal{L}} \vdash t = \underline{{\sf gn}_2(C[x := t])}}$.
	\end{proof}
	
	By suitably adapting the above construction, we obtain a numbering which is monotonic with respect to the sub-string relation, satisfies M4($\overline{\cdot}$) and provides the existence of m-self-referential sentences for $\mathcal{L}$ (i.e., satisfies Lemma~\ref{lem:strongdiaglem}). We leave the details to the diligent reader.
	
	\section{Computational Constraints}
	\label{section:compadequacy}
	\newcommand{\pol}{\mathfrak{Po}}
	\newcommand{\ele}{\mathfrak{El}}
	\newcommand{\prc}{\mathfrak{Pr}}
	In addition to requiring monotonicity, other adequacy constraints for G\"odel numberings 
	can be extracted from the literature. Let $\pol$, $\ele$ and $\prc$ denote the classes of p-time, 
	(Kalm\'ar) elementary and primitive recursive functions respectively. As usual, these classes can be 
	extended to relations and partial functions on $\omega$ as follows.
	
	\begin{definition}
		Let $\mathcal{C} \in \{\pol,\ele,\prc\}$. We say that a subset of $\omega^k$ or a relation on $\omega$ is in $\mathcal{C}$, if its characteristic function is in $\mathcal{C}$.
		
		Let $R \subseteq \omega^k$ be given. We say that a function $f \colon R \to \omega$ is in $\mathcal{C}$, if the total function $f' \colon \omega^k \to \omega$ given by
		\[
		f'(m_1, \ldots, m_k) := 
		\begin{cases}
		f(m_1, \ldots, m_k) +1 & \text{if } m_1, \ldots, m_k \in R;\\
		0 & \text{otherwise.}
		\end{cases}	
		\]
		is in $\mathcal{C}$.
	\end{definition}
	
	Reasonable numberings are commonly required to represent certain syntactic relations and operations by \textit{primitive recursive} relations and operations on $\omega$ (see \citep[Section 5.1]{Halbach2014}). The usual definitions of the syntactic relations (i.e., their extensions) $\mathsf{Var}_{\mathcal{L}}$ (\enquote{is a variable}), $\mathsf{Ter}_{\mathcal{L}}$ (\enquote{is a term}), $\mathsf{AtFml}_{\mathcal{L}}$ (\enquote{is an atomic formula}), $\mathsf{Fml}_{\mathcal{L}}$ (\enquote{is an a formula}), $\mathsf{Sent}_{\mathcal{L}}$ (\enquote{is a sentence}), \enquote{is a free variable in}, \enquote{is a $\mathsf{PA}$-derivation of} etc.,
	are either explicit or of simple recursive structure. Adequate numberings may be required to preserve this simple algorithmic nature, in virtue of representing these relations by primitive recursive relations on $\omega$. More precisely, for any adequate numbering $\xi$, the characteristic functions of the sets of $\xi$-codes of $\mathsf{Var}_{\mathcal{L}}$, $\mathsf{Ter}_{\mathcal{L}}$, etc.\ are required to be primitive recursive. A similar constraint can be extracted for the syntactic functions $\dot{=}$, $\dot\neg$, $\dot\wedge$, $\dot\vee$, $\dot\forall$, $\dot\exists$, $\dot{\mathsf{S}}$, $\dot{+}$, $\dot\times$, $\mathsf{Sub}$ etc., where for instance, $\dot{\wedge} \colon \mathsf{Fml}_{\mathcal{L}}^2 \to \mathsf{Fml}_{\mathcal{L}}$ maps a pair $\langle A,B \rangle$ of formul{\ae} to the conjunction $A \wedge B$, $\dot{\forall} \colon \mathsf{Fml}_{\mathcal{L}} \times \mathsf{Var}_{\mathcal{L}} \to \mathsf{Fml}_{\mathcal{L}}$ maps a pair $\langle A,x \rangle$ to the universal formula $\forall x A$, etc., and where $\mathsf{Sub}(A,x,t)$ is the result of substituting the term $t$ for the variable $x$ in the formula~$A$. Once again, adequate numberings may be required to represent these syntactic functions by primitive recursive functions on~$\omega$. Finally, the standard and the efficient numeral function may be required to be represented primitive recursively.
	
	The arithmetisation	of syntax in weaker theories, like Buss' ${\sf S}^1_2$ and $\mathrm I\Delta_0+\Omega_1$, requires representation of the syntactic relations and functions in weaker theories.\footnote{See \citep{pudl:cuts85} for ways to deal with inefficient representations 
		in weak theories.} A closer inspection of what is going on shows that, in the presence of a reasonable G\"odel numbering like the one based on the length-first ordering, the syntactic relations and functions can be made p-time \emph{without extra effort}.
	See \citep{buss:boun86} and \citep{haje:meta91}. 
	The G\"odel numberings we construct here fit most naturally in the intermediate function class
	$\ele$.
	
	\begin{definition} Let $\mathcal{C} \in \{\pol,\ele,\prc\}$ and let $\nu$ be a numeral function. We say that a numbering $\xi$ of $\mathcal{L}$ is \textit{$\mathcal{C},\nu$-adequate}, if
		\begin{enumerate}[1.]
			\item the set of $\xi$-codes of each syntactic relation specified above is in $\mathcal{C}$;
			\item the \textit{$\xi$-tracking function} of each syntactic function specified above is in $\mathcal{C}$, i.e., for each $k$-ary syntactic function $f$, the function $\xi(f)$ given by $\xi(f)(m_1, \ldots, m_k) := \xi(f(\xi^{-1}(m_1), \ldots,  \xi^{-1}(m_k) ) )$ is in $\mathcal{C}$;
			\item the function  $\xi \circ \nu$ is in $\mathcal C$.
		\end{enumerate}
		We say that $\xi$ is \emph{$\mathcal C$-adequate} iff 
		($\mathcal C\in \verz{\ele,\prc}$ and  $\xi$ is both
		$\mathcal C,(\overline\cdot)$-adequate and $\mathcal C,(\underline\cdot)$-adequate)
		or ($\mathcal C = \pol$ and $\xi$ is $\mathcal C,(\overline\cdot)$-adequate).
	\end{definition}
	
	The definition of \emph{$\mathcal C$-adequate} is somewhat awkward since, for a good numbering like the one based on the length-first ordering, the function $\xi\circ(\underline \cdot)$
	will be exponential. This is the reason for the use of efficient numerals in the context of weak theories. It would be interesting, and conceivably genuinely useful,
	to explore whether there are $\pol$-adequate 
	numberings $\xi$ for which
	$\xi\circ(\underline \cdot)$ is p-time.
	
	\begin{remark}
		Feferman's 
		G\"odel numbering, say {\sf fef}, in \citep{fefe:arit60} 
		is a good example to reflect on. The tracking functions are
		all elementary.
		The code for 0 is 3 and the
		code for successor is 7. We have ${\sf fef}({\sf S}t) =
		2^7\cdot 3^{{\sf fef}(t)}$. So, the numeral function for standard numerals will be superexponential and not elementary. Similarly, for efficient numerals.  
		
		Suppose we replace Feferman's coding for the terms by an efficient coding but leave his code formation for formul{\ae} in place (taking appropriate measures to insure disjointness). Then, we obtain an $\ele$-adequate G\"odel numbering $\digamma$.  
		However, we still would have:
		$\digamma(\neg\, A ) := 2\cdot 3^{\,\digamma(A)}$. This means
		that, if we iterate negations, we run up an exponential tower.
		
		The example of $\digamma$ illustrates that it is possible that a 
		G\"odel numbering is $\ele$-adequate but not elementary in the length of
		expressions. This observation suggests that, where \emph{p-time} and \emph{primitive recursive} satisfy a certain equilibrium, possibly \emph{elementary} does not. If the tracking function for function application is exponential, the codes of numerals, both standard and efficient, grow too fast.
		So, the tracking function for function application is severely constrained. But if this function is so slow, it seems a certain imbalance to make other forms of application much faster. So, why is this numbering not already $\pol$-adequate? \dots 
	\end{remark}
	
	We now introduce a useful proof-theoretical characterisation of the classes $\pol$, $\ele$ and $\prc$. This will result in a specification of the theories in which $\mathcal C$-adequate numberings permit the arithmetisation of syntax. To this end, we first introduce so-called \textit{provably recursive} functions.
	
	\begin{definition}
		Let $T$ be an $\mathcal{L}$-theory. A function $f \colon \omega^k \to \omega$ is 
		called 
		\emph{$\Sigma_n$-definable in $T$}, if there exists a $\Sigma_n$-formula $A(x_1, \ldots, x_k,y)$ such that
		\begin{itemize}
			\item $\mathbb{N} \models A(\underline{m_1}, \ldots, \underline{m_k},\underline{p})$ iff $f(m_1, \ldots, m_k)=p$, for all $m_1, \ldots, m_k,p \in \omega$;
			\item $T \vdash \forall x_1, \ldots, x_k \exists ! y\, A(x_1, \ldots, x_k,y)$.
		\end{itemize}
		The function $f$ is called \emph{provably recursive in $T$} if $f$ is $\Sigma_1$-definable in $T$.
	\end{definition}
	
	\begin{remark}
		We note that there is some awkwardness to this definition since the provably recursive functions of all $\Sigma_1$-unsound theories extending {\sf EA}, like 
		$\mathrm{I}\Sigma_1+{\sf incon}({\sf ZF})$, are precisely
		all recursive functions. This creates the mistaken impression that, e.g., 
		$\mathrm{I}\Sigma_1+{\sf incon}({\sf ZF})$ is stronger than {\sf PA}.
		
		With an extra argument, one can show that this malaise persists even when we replace
		`$\mathbb N \models A(\underline{m_1}, \ldots, \underline{m_k},\underline{p})$' in the definition by `$T \vdash A(\underline{m_1}, \ldots, \underline{m_k},\underline{p})$'.
		
		We are unaware of attempts to address this awkwardness. 
	\end{remark}
	
	The classes $\ele$ and $\prc$ can now be characterised as exactly the functions which are provably recursive in a certain theory.
	The situation for the p-time computable functions is a bit more delicate. 
	
	\begin{theorem} Let $f$ be a number-theoretic function. Then
		\begin{itemize}
			\item $f$ is p-time iff $f$ is $\Sigma_1^{\sf b}$-definable in $\mathsf{S}^1_2$, where $\Sigma_1^{\sf b}$ is the special formula class introduced in \citep{buss:boun86}; 
			\item $f$ is elementary iff $f$ is provably recursive in $\mathsf{EA}$; 
			\item $f$ is primitive recursive iff $f$ is is provably recursive in $\mathsf{I \Sigma_1}$.
		\end{itemize}
	\end{theorem}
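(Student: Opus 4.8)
The plan is to derive each of the three equivalences from established results, so that the proof is mostly a matter of assembling citations together with an indication of the underlying techniques. The p-time clause is exactly Buss's witnessing theorem from \citep{buss:boun86}. The direction from p-time functions to $\Sigma^{\sf b}_1$-definability in $\mathsf{S}^1_2$ is obtained by formalising, within $\mathsf{S}^1_2$, the step-by-step simulation of a polynomial-time Turing machine; here one uses that $\mathsf{S}^1_2$ has adequate sequence coding and proves $\Sigma^{\sf b}_1$-PIND, so that the existence and uniqueness of the computation of polynomially many steps is provable. The converse direction is the substantive part: after a partial cut-elimination reducing an $\mathsf{S}^1_2$-proof to one using only $\Sigma^{\sf b}_1$-cuts, one extracts by induction on the proof a p-time function witnessing the existential quantifier in a provable $\forall x\, \exists y\, A(x,y)$ with $A \in \Sigma^{\sf b}_1$.

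The elementary clause is a classical result; see \citep{haje:meta91}. One direction follows by a straightforward induction on the build-up of the Kalm\'ar elementary functions: the base functions have $\Delta_0$ graphs that $\mathsf{EA}$ proves total, composition is immediate, and bounded sums and products are handled because $\mathsf{EA}$ has $\Delta_0$-induction and the totality of $\exp$ at its disposal, which suffices to bound the relevant quantities and prove totality. The converse is the harder inclusion: using that $\mathsf{EA} = \mathrm{I}\Delta_0 + \exp$, one applies a Herbrand-style analysis (or a Gentzen-style bounding argument) to a proof of $\forall x\, \exists y\, A(x,y)$ with $A \in \Sigma_1$ and reads off an elementary bound on the witness together with an elementary search procedure, so that the witnessing function is itself elementary.

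The primitive recursive clause is the Parsons--Mints--Takeuti theorem. The inclusion of the primitive recursive functions among the provably recursive functions of $\mathrm{I}\Sigma_1$ is immediate, since primitive recursion is available and the induction needed to prove totality of a function defined by primitive recursion is $\Sigma_1$. For the converse one uses a functional (Dialectica) interpretation of $\mathrm{I}\Sigma_1$ into primitive recursive arithmetic (equivalently, a no-counterexample interpretation together with cut-elimination up to $\omega$), extracting from a proof of $\forall x\, \exists y\, A(x,y)$ a primitive recursive term that computes the witness.

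In all three cases the main obstacle is the upper-bound direction --- showing that the provably recursive functions of the theory do not exceed the stated class --- and in each case this rests on a proof-theoretic extraction technique: Buss's witnessing argument for $\mathsf{S}^1_2$, cut-elimination together with a Herbrand analysis for $\mathsf{EA}$, and the functional interpretation for $\mathrm{I}\Sigma_1$. Since all of these are standard, the content of the proof is the compilation of \citep{buss:boun86} with the classical references for elementary and primitive recursive arithmetic.
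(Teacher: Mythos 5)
Your proposal is correct. The paper states this theorem without proof, treating it as a compilation of standard results (Buss's witnessing theorem for $\mathsf{S}^1_2$, the classical characterisation of the Kalm\'ar elementary functions as the provably recursive functions of $\mathsf{EA}$, and the Parsons--Mints--Takeuti theorem for $\mathrm{I}\Sigma_1$), and your assembly of citations together with the standard extraction techniques --- witnessing via partial cut-elimination, Herbrand-style bounding over $\mathrm{I}\Delta_0+\exp$, and the functional interpretation --- is exactly the intended justification.
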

	
	Hence, for example, for any $\prc$-adequate numbering $\xi$ there exists a $\Sigma_1$-formula $\mathsf{Conj}(x,y,z) \in \mathcal{L}$, such that 
	\begin{itemize}
		\item $\mathbb{N} \models \mathsf{Conj}(\underline{\xi(A)},\underline{\xi(B)},\underline{\xi(C)}) $ iff $C = A \wedge B$, for all formul{\ae} $A,B,C \in \mathcal{L}$;
		\item $\mathsf{I \Sigma_1} \vdash \forall x,y \exists ! z \, \mathsf{Conj}(x,y,z) $;
	\end{itemize}
	i.e., the $\xi$-tracking function of $\dot\wedge$ is provably recursive in $\mathsf{I \Sigma_1}$.
	Indeed, the representation of $\dot\wedge$ by a provably recursive function (in $\mathsf{PA}$) is required as a necessary condition for reasonable numberings in \cite[p.~33]{Halbach2014}. As a result of the above theorem, $\prc$-adequate numberings allow a large portion of syntax to be formalised in the theory~$\mathsf{I \Sigma_1}$ and, similarly, for
	$\ele$ and {\sf EA} and for $\pol$ and ${\sf S}^1_2$.\footnote{We note that Halbach's constraint
		loses its meaning in $\Sigma_1$-unsound extensions of {\sf EA}. Also, from the technical point of view, the demand is probably too strong. For example, in \citep{haje:meta91}, we find
		an arithmetisation of syntax in $\mathrm I\Delta_0$. Here the substitution function is not provably total. However, we think that the Second Incompleteness Theorem can still be
		formalized. Clearly, these matters require a lot more attention than has been given
		until now.}
	
	Recall that the constructions of the numberings ${\sf gn}_0$, ${\sf gn}_1$ and ${\sf gn}_2$ rely on an enumeration $(A_n)_{n \in \omega}$ of $\mathcal{L}$-expressions. Thus far, assuming  $(A_n)_{n \in \omega}$ to be effective has sufficed to ensure the effectiveness of the resulting numberings ${\sf gn}_0$, ${\sf gn}_1$ and ${\sf gn}_2$. Since no further computational constraint has been imposed on $(A_n)_{n \in \omega}$, there is however no reason to expect these numberings to be even $\prc$-adequate. In the remainder of this paper, we will assume that the enumeration $(A_n)_{n \in \omega}$ (without repetitions) employed in the constructions of the numberings ${\sf gn}_0$, ${\sf gn}_1$ and ${\sf gn}_2$ is obtained from a standard numbering which is $\ele$-adequate, such 
	that $(A_n)_{n \in \omega}$ also satisfies the remaining assumptions imposed in the constructions of ${\sf gn}_0$, ${\sf gn}_1$ and ${\sf gn}_2$ respectively. That is, $(A_n)_{n \in \omega} = ( \mathsf{gn}_{\ast}^{-1}(n))_{n \in \omega}$, for some suitable standard numbering $\mathsf{gn}_{\ast}$.
	
	Under the assumption that $(A_n)_{n \in \omega}$ is $\ele$-adequate, we now show that the numberings ${\sf gn}_0$, ${\sf gn}_1$ and ${\sf gn}_2$ are $\ele$-adequate, and, thus, also $\prc$-adequate. This will follow from the fact that the standard numbering $\mathsf{gn}_{\ast}$ is $\ele$-adequate and that $\mathsf{gn}_{\ast}$, ${\sf gn}_0$, ${\sf gn}_1$ and ${\sf gn}_2$ are elementarily intertranslatable. Let $\mathsf{tr}_{i,j} \colon \mathsf{gn}_j(\mathcal{L}) \to \omega$ be the translation function, given by $\mathsf{tr}_{i,j} := \mathsf{gn}_i \circ \mathsf{gn}_j^{-1}$, for each $i, j \in \{ \ast,0,1,2 \}$. We then have:
	
	\begin{lem}
		\label{lem:translationsarepr}
		For each $i, j \in \{ \ast,0,1,2 \}$, the translation functions $\mathsf{tr}_{i,j}$ are elementary.
	\end{lem}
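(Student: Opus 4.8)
The plan is to prove the lemma by showing that each numbering $\mathsf{gn}_0, \mathsf{gn}_1, \mathsf{gn}_2$ is elementarily intertranslatable with the background standard numbering $\mathsf{gn}_\ast$; by composition it then suffices to exhibit, for each such constructed numbering $\mathsf{gn}_j$, that both $\mathsf{tr}_{\ast,j} = \mathsf{gn}_\ast \circ \mathsf{gn}_j^{-1}$ and $\mathsf{tr}_{j,\ast} = \mathsf{gn}_j \circ \mathsf{gn}_\ast^{-1}$ are elementary. (The translations among $\mathsf{gn}_0,\mathsf{gn}_1,\mathsf{gn}_2$ then factor through $\mathsf{gn}_\ast$, and the identity $\mathsf{tr}_{j,j} = \mathrm{id}$ is trivial.) The key observation is that the constructions of the $B_n$ (resp.\ $\beta_n$) proceed in stages indexed by $k$, where stage $k$ ends with the list $\Lambda_k$ of length $\nub(k) = 2^{ak+b}+1$ for fixed small constants $a,b$; since $\nub$ is elementary and elementarily invertible (given $n$, the stage $k$ with $\nub(k-1) \le n < \nub(k)$ is found by an elementary search bounded by $\log_2 n$), the whole staged construction is an elementary recursion \emph{provided} we can describe what happens inside a single stage elementarily.

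First I would make precise what "elementary" requires here: since $\mathsf{gn}_\ast$ is $\ele$-adequate, the enumeration $(A_n)_{n\in\omega}$ — i.e.\ the map $n \mapsto \mathsf{gn}_\ast(A_n)$ — together with the $\xi$-tracking functions for sub-expression extraction, substitution $A_k[\mathsf c := \cdot]$, and the predicates "is of the form $\widehat m$" (resp.\ "$\underline m$", "$\bot^m$") are all elementary, as is the efficient-numeral tracking function $n \mapsto \mathsf{gn}_\ast(\overline n)$ and its right inverse on codes of efficient numerals. Next I would argue that stage $k$'s bookkeeping is elementary in $k$: the number $\ell$ of genuinely new sub-expressions of $A_k^\ast$ is computed by listing the (at most $\nusu{A_k^\ast} \le 3k+\mathrm{const}$) sub-expressions and testing each for membership in $\Lambda_{k-1}$ and for being a filler; the filler value $s$ (smallest filler-index not yet used) is, by the downward-closure lemmas (Lemmas~\ref{lem:monoton}, \ref{neerslachtigesmurfA}, \ref{neerslachtigesmurfB}), simply the count of fillers already placed, hence elementary. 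Thus the finite function $n \mapsto (\text{the } \mathsf{gn}_\ast\text{-code of the expression placed at position } n)$, restricted to positions in stage $k$, is uniformly elementary in $k$ and in the position; composing with the elementary $n \mapsto k$ gives that $\mathsf{tr}_{\ast,j}$ is elementary. For $\mathsf{tr}_{j,\ast}$, run the same machine: given the $\mathsf{gn}_\ast$-code of an expression $C$, determine whether $C$ is a filler (then its position is read off directly from its filler-index, by the explicit block structure of the construction) or not (then $C$ is introduced at the unique stage $k$ with $C \preceq A_k$ and $C \not\prec A_{k'}$ for $k'<k$ — equivalently $k = \mathsf{gn}_\ast(C)$-ish, bounded by an elementary search — and its offset within the top block $B_{\nub(k)-\ell+j}$ is located by the same sub-expression enumeration).

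The main obstacle, I expect, is not any single computation but the \emph{uniformity} bookkeeping: one must verify that the "smallest unused filler index $s$" and the block boundaries $\nub(k-1)+p$ vs.\ $\nub(k)-\ell+j$ can be recovered at stage $k$ without replaying all earlier stages in a way that blows past the elementary bound. This is exactly where the downward-closure lemmas pay off — they guarantee that the set of used filler-indices after stage $k-1$ is an initial segment $\{0,1,\dots,s-1\}$, so $s$ is just its cardinality, and the cardinality of the filler-part of $\Lambda_{k-1}$ is $\nub(k-1)$ minus the total number of non-filler sub-expressions of $A_0^\ast,\dots,A_{k-1}^\ast$, a sum of at most $k$ terms each bounded by $3k+\mathrm{const}$, hence computable elementarily. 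Once this is in place, the remaining verifications (that composition of elementary functions is elementary, that bounded search over a $\log$-sized range is elementary, that the substitution and sub-expression tracking functions compose correctly) are routine. I would present the $\mathsf{gn}_2$ case in full and remark that $\mathsf{gn}_0$ and $\mathsf{gn}_1$ are handled identically, with $\widehat m$ replaced by $\underline m$ resp.\ $\bot^m$ and $\nub$ adjusted, the string case additionally using the quadratic bound $\nusus\alpha \le |\alpha|^2+1$ in place of the linear sub-expression bound — which only changes constants, not the elementary character.
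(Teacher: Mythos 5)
Your proposal is correct and follows essentially the same route as the paper's proof: reduce everything to the translations between $\mathsf{gn}_\ast$ and each constructed numbering, recover the stage $k$ elementarily from the exponential block structure of $\nub$, use the downward closure of the filler indices to obtain the smallest unused filler index as a count, and bound the per-stage work so that the whole staged computation is a limited course-of-values recursion, under which $\ele$ is closed. The only divergence is in certifying the inverse direction, where the paper invokes the criterion \emph{elementary graph plus elementary domination} (with the bound $\mathsf{tr}_{\ast,2}(k)<\nub(k)$ coming from Lemma~\ref{lem:AkinLambdak}) instead of your direct inversion of the machine, but both work.
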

	
	\begin{proof}
		We treat the case of the intertranslation of $\ast$ and 2. To show that $\mathsf{tr}_{\ast,i}$ and $\mathsf{tr}_{i,\ast}$ are elementary for $i = 0,1$ proceeds similarly. From these cases we can conclude that $\mathsf{tr}_{i,j}$ is elementary for $i,j \leq 2$.
		
		We use the fact that the coding machinery for finite sequences can be developed in $\ele$ (see, for example, \citep[Chapter 2]{Schwichtenberg2012}). Let $\# \colon \omega^{< \omega} \to \omega$ be an elementary coding function of finite sequences and let ${\sf lh}$ be the length function. Let $g \colon \omega \to \omega$ serve as a parameter. We define the function $\sigma_g \colon \omega \to \omega$, by setting $\sigma_g(k) := \# \langle i_0, \ldots, i_{\ell-1} \rangle$, where $A_{i_0}, \ldots, A_{i_{\ell-1}}$ are exactly the sub-expressions of $A_k [{\sf c} := {\sf S}\widetilde{(k+2)}]$ which are not of the form $\widehat{m}$ such that  $g(n) \neq i_j$ for all $n < \nub(k-1)$ and $j < {\sf lh}(\sigma_g(k)) = \ell$. Since $\nub \in \ele$ and ${\sf gn}_\ast$ is $\ele$-adequate, $g \in \ele$ implies $\sigma \in \ele$, by the usual closure properties of elementary functions. In case that $g := \mathsf{tr}_{2,\ast}$, the last clause is equivalent to $A_{i_j} \notin \Lambda_{k-1}$ for each $j < \ell$.
		
		Set $\sigma := \sigma_{\mathsf{tr}_{2,\ast}}$. For any $n \in \omega$, we compute $\mathsf{tr}_{2,\ast}(n)$ by course-of-values recursion as follows. As usual, we will take the empty sum to be 0.
		
		\newcommand\textvtt[1]{{\normalfont\fontfamily{cmvtt}\selectfont #1}}
		
		\begin{quotation}
			\textvtt{Compute the smallest $k$ (${\leq} n)$ such that 
				$n\leq 2^{2k+4}$. Is $p := 2^{2k+4}-n < {\sf lh}(\sigma(k))$? 
				
				If \emph{yes}, set $\mathsf{tr}_{2,\ast}(n) := [\sigma(k)]_{{\sf lh}(\sigma(k)) -1 -p}$. 
				
				If \emph{no}, take
				\[
				m := n - \sum_{i=0}^{k-1}{\sf lh}(\sigma(i)).
				\]
				Set $\mathsf{tr}_{2,\ast}(n) := {\sf gn}_{\ast}(\widehat{m})$.} 
		\end{quotation}
		
		It is left to the reader to verify that this computation really defines the function~$\mathsf{tr}_{2,\ast}$. Note that when computing $\mathsf{tr}_{2,\ast}(n)$, we only resort to values of $\mathsf{tr}_{2,\ast}$ for arguments smaller than $n$, since $\nub(k-1) \leq n$. Moreover, let $\mathsf{B} \colon \omega \to \omega$ the elementary function given by $\mathsf{B}(k) := \mathsf{gn}_{\ast}(A_k [{\sf c} := {\sf S}\widetilde{(k+2)}])$. Note that $\mathsf{tr}_{2,\ast}(n)$ codes a filler expression of the form $\widehat{m}$, or a sub-expression of $A_k [{\sf c} := {\sf S}\widetilde{(k+2)}]$. Since in each stage $k$, there are less than $\nub(k)$-many fillers added, we obtain the estimate $\mathsf{tr}_{2,\ast}(n) \leq \max({\sf gn}_{\ast}(\widehat{(k+1) \cdot \nub(k)},\mathsf{B}(k))$. Hence, the employed recursion is \textit{limited}. Since elementary functions are closed under limited course-of-values recursion, we conclude that $\mathsf{tr}_{2,\ast} \in \ele$.
		
		In order to show that $\mathsf{tr}_{\ast,2}$ is elementary, we use the fact 
		that a function is elementary if (1) its graph is elementary and (2) it can be dominated by an elementary function. Since $\mathsf{tr}_{2,\ast}$ is elementary, it is easy to check that the graph of $\mathsf{tr}_{\ast,2}$ is elementary. Hence, it is sufficient to show (2). Let $A_k \in \mathcal{L}$. By Lemma~\ref{lem:AkinLambdak}, $A_k \in \Lambda_k$. Hence, $\mathsf{tr}_{\ast,2}(k) < \nub(k)$ and we are done.
	\end{proof}
	
	It is easy to show that the elementary (or primitive recursive) relations and functions are invariant regarding numberings which are elementarily (or primitive recursively) intertranslatable.
	
	\begin{lem}
		\label{lemma:prinvarianceofequivalentnumb}
		Let $\mathcal{C} \in \{\pol,\ele,\prc  \}$. Let $S_i$ be sets and let $\alpha_i, \beta_i \colon S_i \to \omega$ be injective functions such that the translation functions $\alpha_i \circ \beta_i^{-1}$ and $\beta_i \circ \alpha_i^{-1}$ are in $\mathcal{C}$, for $i \in \{1, \ldots ,k\}$ and $k \in \omega$. For every $R \subseteq S_1 \times \ldots \times S_k$, $Q \subseteq S_{k+1}$ and function $f \colon R \to Q$ we then have:
		\begin{itemize}
			\item The relation
			\[
			\vec\alpha (R) := \{ \langle \alpha_1(s_1), \ldots, \alpha_k(s_k) \rangle \mid \langle s_1,\ldots,s_k \rangle \in R \}
			\]
			is in $\mathcal{C}$ iff $\vec\beta (R)$ is in $\mathcal{C}$.
			\item The function $f_{\vec\alpha} \colon \vec\alpha(R) \to \omega$ given by
			\[
			f_{\vec\alpha}(m_1, \ldots, m_k) := \alpha_{k+1}f(\alpha_1^{-1}(m_1), \ldots, \alpha_k^{-1}(m_k))
			\]
			is in $\mathcal{C}$ iff the function $f_{\vec\beta} \colon \vec\beta(R) \to \omega$ is in $\mathcal{C}$. 
		\end{itemize}
	\end{lem}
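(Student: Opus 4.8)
The plan is to reduce both equivalences to two routine facts: that each of $\pol$, $\ele$, $\prc$ is closed under composition and under definition by finitely many cases with guards whose characteristic functions lie in the class; and that the ranges $\alpha_i(S_i)$ and $\beta_i(S_i)$ are themselves in $\mathcal C$. For the latter, I would first unwind the partiality convention: the translation function $\alpha_i\circ\beta_i^{-1}$ has domain $\beta_i(S_i)\subseteq\omega$, and its membership in $\mathcal C$ means, by the stated convention, that the total function returning $(\alpha_i\circ\beta_i^{-1})(m)+1$ on $\beta_i(S_i)$ and $0$ elsewhere is in $\mathcal C$. From this single total function one reads off both $\chi_{\beta_i(S_i)}$ (the set where it is nonzero) and, on that set, the genuine value of $\alpha_i\circ\beta_i^{-1}$. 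Symmetrically, $\beta_i\circ\alpha_i^{-1}\in\mathcal C$ yields $\chi_{\alpha_i(S_i)}\in\mathcal C$. So from now on I treat the translation functions as honest total $\mathcal C$-functions (extended by, say, $0$ off their domains) and use $\chi_{\alpha_i(S_i)},\chi_{\beta_i(S_i)}\in\mathcal C$ freely.

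For the relation clause, suppose $\vec\alpha(R)\in\mathcal C$, i.e.\ $\chi_{\vec\alpha(R)}\in\mathcal C$. Define $h\colon\omega^k\to\omega$ by: if $m_i\notin\beta_i(S_i)$ for some $i$, set $h(m_1,\ldots,m_k):=0$; otherwise set $h(m_1,\ldots,m_k):=\chi_{\vec\alpha(R)}\bigl((\alpha_1\circ\beta_1^{-1})(m_1),\ldots,(\alpha_k\circ\beta_k^{-1})(m_k)\bigr)$. Since each $\alpha_i\circ\beta_i^{-1}$ maps $\beta_i(S_i)$ bijectively onto $\alpha_i(S_i)$, one checks $h=\chi_{\vec\beta(R)}$; and $h$ is built from $\mathcal C$-functions by composition and a finite case distinction guarded by the $\chi_{\beta_i(S_i)}$, hence $h\in\mathcal C$. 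Thus $\vec\beta(R)\in\mathcal C$, and the converse is the same argument with $\alpha$ and $\beta$ interchanged.

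For the function clause, suppose $f_{\vec\alpha}\in\mathcal C$, i.e.\ its total extension $f'_{\vec\alpha}$ is in $\mathcal C$. The key identity is that on $\vec\beta(R)$ one has
\[
f_{\vec\beta}(m_1,\ldots,m_k)=(\beta_{k+1}\circ\alpha_{k+1}^{-1})\bigl(f_{\vec\alpha}((\alpha_1\circ\beta_1^{-1})(m_1),\ldots,(\alpha_k\circ\beta_k^{-1})(m_k))\bigr),
\]
because $f_{\vec\beta}(\vec m)=\beta_{k+1}f(\beta_1^{-1}(m_1),\ldots,\beta_k^{-1}(m_k))$ and $\beta_i^{-1}(m_i)=\alpha_i^{-1}((\alpha_i\circ\beta_i^{-1})(m_i))$. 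I would therefore define a total $g\colon\omega^k\to\omega$ that outputs $0$ whenever some $m_i\notin\beta_i(S_i)$ or the inner call falls outside $\vec\alpha(R)$ (detected by $f'_{\vec\alpha}$ returning $0$), and otherwise outputs $(\beta_{k+1}\circ\alpha_{k+1}^{-1})\bigl(f'_{\vec\alpha}(\cdots)-1\bigr)+1$. Again this is a composition of $\mathcal C$-functions with finitely many guarded cases, so $g\in\mathcal C$; and $g=f'_{\vec\beta}$, i.e.\ $f_{\vec\beta}\in\mathcal C$. Interchanging $\alpha$ and $\beta$ gives the converse.

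I do not expect any genuine obstacle here; the proof is bookkeeping. The two points needing care are (a) handling the partiality convention uniformly so that all functions in play are honestly total and in $\mathcal C$, and (b) invoking the closure of $\pol$, $\ele$, and $\prc$ under composition and definition by cases — standard in all three cases, with the only mild subtlety being that composition preserves $\pol$ because output lengths of polynomial-time functions stay polynomially bounded.
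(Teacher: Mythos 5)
Your proof is correct; the paper in fact gives no proof of this lemma (it is prefaced with ``It is easy to show\dots'' and left as routine), and your argument --- reading off $\chi_{\alpha_i(S_i)}$ and $\chi_{\beta_i(S_i)}$ from the totalised translation functions, then transporting $\chi_{\vec\alpha(R)}$, resp.\ $f'_{\vec\alpha}$, by composition and guarded case distinction, with the injectivity of the $\alpha_i$ securing that your $h$ really equals $\chi_{\vec\beta(R)}$ --- is exactly the intended bookkeeping. The only point worth making explicit is one your use of $\beta_{k+1}\circ\alpha_{k+1}^{-1}$ already presupposes: the translation hypothesis must be read as covering $i=k+1$ as well, since the function clause needs the translation in coordinate $k+1$.
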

	
	We have seen in this section that $\mathcal{PR}$-adequacy can be extracted as a necessary condition for reasonable numberings from the literature. Since $\mathsf{gn}_{\ast}$ is $\mathcal E$-adequate we conclude from Lemma~\ref{lem:translationsarepr} and Lemma~\ref{lemma:prinvarianceofequivalentnumb} that the numberings ${\sf gn}_0$, ${\sf gn}_1$ and ${\sf gn}_2$ are $\mathcal E$-adequate and, in particular, $\mathcal{PR}$-adequate. Thus, the computational constraint of $\mathcal{PR}$-adequacy (or $\mathcal E$-adequacy) is not sufficiently restrictive to deny these numberings the status of adequate formalisation choices.
	
	\begin{question}
		It remains open whether there are $\pol$-adequate versions of the constructions given in this paper. More specifically:
		\begin{itemize}
			\item Can we find $\pol$-adequate self-referential G\"odel numberings that are monotonic?
			\item Can we find $\pol$-adequate G\"odel numberings which satisfy the Strong Diagonal Lemma~\ref{lem:strongdiaglem} that are strongly monotonic?
		\end{itemize}
	\end{question}
	
	\section{Domination and Regularity}
	\label{section:regularity}
	
	We have seen in Section~\ref{section:numeralsasquotations} that strong monotonicity is not sufficient to rule out m-self-referential sentences formulated in $\mathcal{L}$. This is due to the fact that the \enquote{fixed point} term $t$ used in the proof of Lemma~\ref{lem:strongdiaglem} is not an efficient numeral. Hence, conditions M3 and M4($\overline{\cdot}$) do not force the value of $t$ to be smaller than the code of the formula containing $t$.
	
	For the remainder of this section, let $\mathcal{L}$, $\mathcal{K}$ be fixed languages with $\mathcal{L}^0 \subseteq \mathcal{K} \subseteq \mathcal{L}$, as specified in Section~\ref{subsection:language}. We introduce two related constraints on numberings of $\mathcal{L}$ which prohibit the code of a closed $\mathcal{K}$-term to be smaller than its value. We will see that together with requiring monotonicity, these constraints, applied to $\mathcal{L}$, rule out m-self-referential sentences formulated in~$\mathcal{L}$.
	
	\subsection{Domination}
	
	We start by defining the constraint of \textit{domination}.
	
	\begin{definition}
		Let $\xi$ be a numbering of $\mathcal{L}$. We say that $\xi$ is \textit{$\mathcal{K}$-dominating}, if it satisfies the following principle:
		\begin{enumerate}
			\item[M5($\mathcal{K}$).] For all $t \in \mathsf{ClTerm}_\mathcal{K}$ whose value is in $\xi(\mathcal{L})$: $\mathsf{ev}(t) \leq \xi( t )$.
		\end{enumerate}
		We say that $\xi$ is \textit{dominating}, if it is $\mathcal{L}$-dominating.
	\end{definition}
	
	Monotonic dominating numberings are strongly monotonic. More precisely, M5($\mathcal{K}$) implies M4$^\ast$($\nu$) 
	for every numeral function $\nu$ for $\mathcal K$ (cf.~Section~\ref{section:numeralsasquotations}):
	
	\begin{cor}
		Let $\nu$ be a numeral function for $\mathcal{K}$ and let $\xi$ be a numbering of $\mathcal{L}$. If $\xi$ satisfies M5$(\mathcal{K})$, then $\xi$ satisfies M4$^\ast(\nu)$.
	\end{cor}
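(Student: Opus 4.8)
The plan is simply to unwind the two definitions and instantiate M5($\mathcal{K}$) at the appropriate numeral term. Fix an arbitrary $A \in \mathcal{L}$ and put $n := \xi(A)$. Since $\nu$ is a numeral function for $\mathcal{K}$, the term $t := \nu(n)$ is a closed $\mathcal{K}$-term with $\mathsf{ev}(t) = n$. Its value is $n = \xi(A)$, which lies in $\xi(\mathcal{L})$, so the side condition in M5($\mathcal{K}$) is met for this $t$. Applying M5($\mathcal{K}$) then yields $n = \mathsf{ev}(\nu(n)) \leq \xi(\nu(n))$, that is, $\xi(A) \leq \xi(\nu(\xi(A)))$, which is precisely M4$^\ast(\nu)$. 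As $A$ was arbitrary, this establishes the corollary.

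There is essentially no obstacle here: the only point worth flagging is that one must actually invoke the ``value in $\xi(\mathcal{L})$'' clause of M5($\mathcal{K}$), and this is automatic in the present situation because the relevant value is the $\xi$-code $\xi(A)$ of an $\mathcal{L}$-expression. I would also note in passing why one obtains only the weakened form M4$^\ast(\nu)$ and not M4($\nu$): M5($\mathcal{K}$) asserts only $\mathsf{ev}(t) \leq \xi(t)$, and nothing in it excludes the equality $\mathsf{ev}(\nu(n)) = \xi(\nu(n))$; this is consistent with the earlier observation that certain length-first numberings paired with standard numerals satisfy M4$^\ast$ but fail M4.
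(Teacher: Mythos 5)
Your proof is correct and follows essentially the same route as the paper's: instantiate M5($\mathcal{K}$) at the term $\nu(\xi(A))$, whose value is $\xi(A)$, to get $\xi(A) = \mathsf{ev}(\nu(\xi(A))) \leq \xi(\nu(\xi(A)))$. If anything, you are slightly more careful than the paper in explicitly checking the ``value in $\xi(\mathcal{L})$'' side condition of M5($\mathcal{K}$), which the paper's one-line proof glosses over by asserting the inequality for all $n \in \omega$.
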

	
	\begin{proof}
		We have $n = \mathsf{ev}(\nu(n)) \leq \xi(\nu(n))$ for all $n \in \omega$. 
		Hence, $\xi(A) \leq \xi( \nu(\xi(A)) )$ for all $A \in \mathcal{L}$.
	\end{proof}
	
	We now show that when employing monotonic and $\mathcal{K}$-dominating numberings, m-self-reference is not attainable in~$\mathcal{K}$. That is, taken together, these two constraints rule out the existence of strong fixed point terms in $\mathcal K$.
	
	\begin{lem}
		\label{lem:unattainabilityofSR}
		Let $\xi$ be a numbering of $\mathcal{L}$ satisfying M3 and M5(${\mathcal{K}})$. Then for all $\mathcal{L}$-formul{\ae} $A(x)$ and closed ${\mathcal{K}}$-terms $t$, $\mathbb{N} \not\models t = \underline{\xi(A(t))}$.
	\end{lem}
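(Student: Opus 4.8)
The plan is to argue by contradiction, pitting the sub-expression estimate M3 against the domination estimate M5($\mathcal{K}$). So I would suppose there were an $\mathcal{L}$-formula $A(x)$ — with $x$ genuinely occurring free, which is the standing convention attached to the notation $A(x)$ — together with a closed $\mathcal{K}$-term $t$ such that $\mathbb{N} \models t = \underline{\xi(A(t))}$. Since $\mathbb{N}$ interprets each standard numeral $\underline{n}$ by $n$, this assumption says precisely that $\mathsf{ev}(t) = \xi(A(t))$.

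From here I would record two observations. First, because $x$ occurs free in $A(x)$, the term $t$ occurs in $A(t)$, so $t \prec A(t)$; note that $\mathcal{K} \subseteq \mathcal{L}$, so $t$ is in particular a closed $\mathcal{L}$-term and $A(t)$ an $\mathcal{L}$-formula, and M3 applies to give $\xi(t) < \xi(A(t))$. Second, $t \in \mathsf{ClTerm}_{\mathcal{K}}$ and its value $\mathsf{ev}(t) = \xi(A(t))$ lies in $\xi(\mathcal{L})$, being the $\xi$-code of the $\mathcal{L}$-expression $A(t)$; hence M5($\mathcal{K}$) applies and yields $\mathsf{ev}(t) \leq \xi(t)$. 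Chaining the two estimates gives $\xi(A(t)) = \mathsf{ev}(t) \leq \xi(t) < \xi(A(t))$, which is absurd.

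There is essentially no computational content here; the whole argument is this short chain of inequalities. The one point I would be careful to flag at the outset is the convention that $x$ actually occurs free in $A(x)$: this is exactly what guarantees $t \prec A(t)$ and thereby licenses the use of M3. Without it the claim fails outright — if $x$ does not occur in $A$, then taking $t := \underline{\xi(A)}$ makes $\mathbb{N} \models t = \underline{\xi(A(t))}$ hold trivially — so I would not expect an obstacle in the proof so much as a small hypothesis to make explicit.
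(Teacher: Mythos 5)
Your proof is correct and is essentially the paper's own argument: both derive the contradiction $\xi(A(t)) = \mathsf{ev}(t) \leq \xi(t) < \xi(A(t))$ from M5($\mathcal{K}$) and M3. Your explicit checks of the side conditions — that $\mathsf{ev}(t)\in\xi(\mathcal{L})$ so that M5($\mathcal{K}$) applies, and that $x$ must occur free in $A(x)$ so that $t \prec A(t)$ — are sound and merely make precise what the paper leaves implicit.
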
	
	
	\begin{proof}
		Assume that there exists an $\mathcal{L}$-formula $A(x)$ and a closed $\mathcal{K}$-term $t$ 
		such that $\mathbb{N} \models t = \underline{\xi(A(t))}$. Using M3 and M5(${\mathcal{K}}$),
		we then derive the contradiction $\mathsf{ev}(t) = \xi(A(t)) > \xi(t) \geq \mathsf{ev}(t)$.
	\end{proof}
	
	Let $\xi$ be a standard numbering of $\mathcal{L}^+$, where $\mathcal{L}^+$ contains a term which represents the canonical strong diagonal function. That is, $\mathcal{L}^+$ represents the function which maps (the $\xi$-code of) an ${\mathcal{L}}^+$-formula $A(x)$ with $x$ free to (the $\xi$-code of) its diagonalisation $A(\underline{\xi(A)})$. As we have seen already, there exists an $\mathcal{L}^+$-formula $A(x)$ and a term $t \in {\mathcal{L}}^+$ such that $\mathbb{N} \models t = \underline{\xi(A[ x := t])}$ (Lemma~\ref{classicdiaglemma}). 
	
	The reason to define the constraints of domination with respect to a \emph{sub}-language of the domain of a numbering can be illustrated as follows. While $\xi$ cannot be dominating with respect to $\xi$'s domain (namely, $\mathcal{L}^+$) by Lemma~\ref{lem:unattainabilityofSR}, it is possibly dominating with respect to the sub-language $\mathcal{L}^0$ (for instance in case that $\xi$ is the length-first ordering). 
	
	However, as we will show in Section~\ref{subsection:nondominatingnumberings}, there are reasonable numberings which even fail to be $\mathcal{L}^0$-dominating. Moreover, there is a large class of standard numberings found in the literature which is not dominating with respect to languages which contain a function symbol for exponentiation or the smash function (see Section~\ref{subsubsection:smash}). These examples put considerable pressure on the view that domination is a necessary condition for reasonable choices of numberings. This constraint thus appears to be of no use to a philosophically adequate approach which aims to block the attainability of m-self-reference in arithmetic formulated in $\mathcal{L}^0$.
	
	Exactly the same remarks apply to the constraint of regularity, introduced in the next section.
	
	\subsection{Regularity}
	
	We now introduce another constraint on numberings, along the lines of a notion put forward in \cite[p.~17]{Heck2007}.\footnote{Heck's (\citeyear{Heck2007}) notion is an amalgam of our notion of regularity and monotonicity. In fact, Heck's notion of regularity is entailed by M2, M3 and M5(${\mathcal{L}^0}$) (with $\geq$ replaced by $>$).}
	
	\begin{definition}
		Let $\xi$ be a numbering of $\mathcal{L}$. We say that $\xi$ is $\mathcal{K}$-\textit{regular} if
		\begin{itemize}
			\item $\xi(c) \geq c^{\mathbb{N}}$ for all constant symbols $c \in \mathcal{K}$
			\item $\xi(f(t_1,\ldots,t_k)) > f^{\mathbb{N}}(\xi(t_1),\ldots,\xi(t_k))$ for all $k$-ary function symbols $f \in \mathcal{K}$
		\end{itemize}
		Moreover, we call $\xi$ \textit{regular}, if it is $\mathcal{L}$-regular.
	\end{definition}
	
	By induction we immediately get
	
	\begin{lem}
		\label{lem:codelargerthanvalue}
		Let $\xi$ be a $\mathcal{K}$-regular numbering of $\mathcal{L}$. Then $\xi$ satisfies M5$(\mathcal{K})$.
	\end{lem}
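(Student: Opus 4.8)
The plan is a straightforward structural induction on closed $\mathcal{K}$-terms $t$, establishing the (proviso-free) inequality $\mathsf{ev}(t) \leq \xi(t)$. Since $\mathcal{K} \subseteq \mathcal{L}$, the code $\xi(t)$ is defined for every such $t$, and this inequality is in particular M5($\mathcal{K}$) --- in fact slightly stronger, since it drops the hypothesis that $\mathsf{ev}(t) \in \xi(\mathcal{L})$.

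For the base case, $t$ is a constant symbol $c \in \mathcal{K}$, and then $\mathsf{ev}(c) = c^{\mathbb{N}} \leq \xi(c)$ is exactly the first clause of $\mathcal{K}$-regularity. For the induction step, write $t = f(t_1,\ldots,t_k)$ with $f$ a $k$-ary function symbol of $\mathcal{K}$; the induction hypothesis yields $\mathsf{ev}(t_i) \leq \xi(t_i)$ for each $i \leq k$. Combining monotonicity of $f^{\mathbb{N}}$ in each argument with the second clause of $\mathcal{K}$-regularity gives
\[
\mathsf{ev}(t) = f^{\mathbb{N}}(\mathsf{ev}(t_1),\ldots,\mathsf{ev}(t_k)) \leq f^{\mathbb{N}}(\xi(t_1),\ldots,\xi(t_k)) < \xi(f(t_1,\ldots,t_k)) = \xi(t),
\]
which closes the induction.

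There is no real obstacle here; the only point that merits a word of care is the appeal to monotonicity of the intended interpretations in the induction step, since the inequality can genuinely fail for non-monotone $f^{\mathbb{N}}$ (e.g.\ a unary $f$ with $f^{\mathbb{N}}(0)$ large and $f^{\mathbb{N}}(n) = 0$ for $n \geq 1$, together with a regular $\xi$ having $\xi(c) \geq 1$). This is automatic for $\mathsf{S}$, $+$, $\times$ and the usual arithmetical functions, and M5 and regularity are in any case only invoked in the paper with $\mathcal{K} = \mathcal{L}^0$, so I would simply record the (weak) monotonicity of $f^{\mathbb{N}}$ that is being used and present the proof as the displayed chain of inequalities above.
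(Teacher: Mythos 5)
Your induction is exactly the argument the paper has in mind (it states the lemma with only the remark \enquote{By induction we immediately get}), so the proposal is correct and takes the same route. Your caveat about needing (weak) monotonicity of the intended interpretations $f^{\mathbb{N}}$ in the induction step is a genuine point that the paper leaves implicit --- the lemma as literally stated can fail for a non-monotone $f^{\mathbb{N}}$, though it holds for $\mathsf{S}$, $+$, $\times$, smash and exponentiation, which are the only cases the paper uses.
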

	
	Thus, $\mathcal{K}$-regular numberings are $\mathcal{K}$-dominating.
	
	\subsubsection{Example of a Regular Numbering}
	
	A large class of standard numberings found in the literature are ${\mathcal{L}^0}$-regular and hence in particular ${\mathcal{L}^0}$-dominating. As an example, we show that the length-first numbering of ${\mathcal{L}}$ is ${\mathcal{L}^0}$-regular, where $\mathcal L$ is given in Polish notation (see Section~\ref{subsection:language}). This follows as a corollary from a more general observation which has been suggested to us by Fedor Pakhomov.
	
	Let $S$ be a set of strings that is closed under the constructor operations $\dot{{\sf A}}$ and $\dot{{\sf M}}$, given by $\langle \alpha,\beta \rangle \mapsto {\sf A} \alpha \beta$ and $\langle \alpha,\beta \rangle \mapsto {\sf M} \alpha \beta$ respectively. 
	
	Let $\xi \colon S \to \omega$ be a numbering of $S$ and let $g \colon S^2 \to S$ be a binary operation on $S$. We call the function which maps $\xi$-codes $\langle m,n \rangle$ to $\xi(g(\xi^{-1}(m),\xi^{-1}(n)))$ the $\xi$-\emph{tracking function} of $g$. As usual, we call a function $f \colon \omega^2 \to \omega$ monotonic, if $f(x_1, x_2) \leq f(y_1, y_2)$ for all $x_1 \leq y_1$ and $x_2 \leq y_2$.
	
	\begin{lem}[Pakhomov]
		\label{lem:regularity}
		Let $\xi \colon S \to \omega$ be a bijection such that the $\xi$-tracking functions of $\dot{{\sf A}}$ and $\dot{{\sf M}}$ are monotonic. We have for all $\alpha, \beta \in S$:
		\[
		\xi({\sf A} \alpha \beta), \xi({\sf M} \alpha \beta) > \xi(\alpha) \cdot \xi(\beta).
		\]
	\end{lem}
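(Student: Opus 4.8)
The plan is to bound $\xi({\sf A}\alpha\beta)$ from below by counting how many elements of $S$ can receive a $\xi$-code at most $\xi({\sf A}\alpha\beta)$, and to exhibit at least $(m+1)(n+1)$ of them, where $m := \xi(\alpha)$ and $n := \xi(\beta)$. Write $f$ for the $\xi$-tracking function of $\dot{{\sf A}}$, so that $f(i,j) = \xi({\sf A}\,\xi^{-1}(i)\,\xi^{-1}(j))$ for all $i,j \in \omega$; by hypothesis $f$ is monotonic. I will run the argument for $\dot{{\sf A}}$ and note that the case of $\dot{{\sf M}}$ is verbatim, with the monotonic tracking function of $\dot{{\sf M}}$ in place of $f$. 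The first step is to observe that the constructor $(\gamma,\delta)\mapsto {\sf A}\gamma\delta$ is injective, since a string of the shape ${\sf A}\gamma\delta$ determines $\gamma$ and $\delta$ uniquely (unique readability of prefix notation); hence, as $\xi$ is a bijection, the $(m+1)(n+1)$ strings $s_{ij} := {\sf A}\,\xi^{-1}(i)\,\xi^{-1}(j)$ with $0\le i\le m$ and $0\le j\le n$ are pairwise distinct elements of $S$.

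Next I would invoke monotonicity: for each such pair we have $i\le m$ and $j\le n$, so $\xi(s_{ij}) = f(i,j)\le f(m,n) = \xi({\sf A}\alpha\beta)$. Thus $(m+1)(n+1)$ pairwise distinct elements of $S$ have their $\xi$-codes inside $\{0,1,\dots,\xi({\sf A}\alpha\beta)\}$; since $\xi$ is injective, this set of numbers must have at least $(m+1)(n+1)$ members, so $\xi({\sf A}\alpha\beta)+1 \ge (m+1)(n+1)$, i.e.\ $\xi({\sf A}\alpha\beta) \ge mn+m+n$.

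To finish, I would split into cases on $m+n$. If $m+n\ge 1$, the bound just obtained already gives $\xi({\sf A}\alpha\beta)\ge mn+1 > mn = \xi(\alpha)\cdot\xi(\beta)$. The only remaining case is $m=n=0$, i.e.\ $\alpha=\beta=\xi^{-1}(0)$: here ${\sf A}\alpha\beta$ is a string strictly longer than $\alpha$, hence $\ne\xi^{-1}(0)$, so $\xi({\sf A}\alpha\beta)\ne 0$ and therefore $\xi({\sf A}\alpha\beta)\ge 1 > 0 = \xi(\alpha)\cdot\xi(\beta)$. The same two steps, now with the tracking function of $\dot{{\sf M}}$, settle the claim for ${\sf M}$, and this completes the proof.

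I expect the one genuinely delicate point to be the injectivity of the constructors used in the first step. If $S$ were merely closed under $\dot{{\sf A}}$ and $\dot{{\sf M}}$ without being uniquely readable (for instance, the set of all strings over the alphabet), distinct pairs could collapse to the same string and the counting argument would break down. In the setting of the intended corollary, however---$\mathcal{L}$-expressions in Polish notation---the strings ${\sf A}\gamma\delta$ and ${\sf M}\gamma\delta$ do determine $(\gamma,\delta)$, so the injectivity is immediate and everything else is elementary counting together with a one-line length argument.
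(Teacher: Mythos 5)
Your proof is correct and follows essentially the same route as the paper's: both count the $(\xi(\alpha)+1)(\xi(\beta)+1)$ pairs with codes bounded by $\xi(\alpha)$ and $\xi(\beta)$, use monotonicity of the tracking function together with injectivity of the constructor and of $\xi$ to conclude $\xi({\sf M}\alpha\beta) \geq \xi(\alpha)\cdot\xi(\beta) + \xi(\alpha) + \xi(\beta)$, and then dispose of the case $\xi(\alpha)=\xi(\beta)=0$ separately. Your closing remark about unique readability is a fair observation about a hypothesis the paper leaves implicit in the phrase ``injectivity of $\dot{{\sf M}}$,'' but it does not change the substance of the argument.
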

	
	\begin{proof}
		We show the claim for ${\sf M} \alpha \beta$. Let $\alpha, \beta \in S$. Since $\xi$ is bijective, there are $(\xi(\alpha)+1) \cdot (\xi(\beta)+1)$-many pairs of expressions $\langle \alpha',\beta' \rangle$ such that $\xi(\alpha') \leq \xi(\alpha)$ and $\xi(\beta') \leq \xi(\beta)$. For each such pair $\langle \alpha',\beta' \rangle$, we have $\xi({\sf M} \alpha' \beta') \leq \xi({\sf M} \alpha \beta)$, since the tracking function of $\dot{{\sf M}}$ is monotonic. Moreover, for any $\langle \alpha',\beta' \rangle \neq \langle \alpha'',\beta'' \rangle$ we have $\xi({\sf M} \alpha' \beta') \neq \xi({\sf M} \alpha'' \beta'')$ by the injectivity of $\dot{{\sf M}}$ and $\xi$. We thus conclude
		\[
		\xi(\alpha) \cdot \xi(\beta) + \xi(\alpha) +\xi(\beta) = (\xi(\alpha)+1) \cdot (\xi(\beta)+1)-1 \leq \xi({\sf M} \alpha \beta).
		\]
		In particular, $\xi(\alpha) \cdot \xi(\beta) < \xi({\sf M} \alpha \beta)$, if one of $\xi(\alpha)$, $\xi(\beta)$ is $ > 0$. Moreover, if both of $\xi(\alpha)$, $\xi(\beta)$ are $0$, then, by injectivity, ${\sf M}\alpha\beta$ cannot be 0, so we have 
		$\xi({\sf M}\alpha\beta) >0 =
		\xi(\alpha)\cdot\xi(\beta)$.
	\end{proof}
	
	Let $\mathcal L$ be given in Polish notation and let $\mathcal A$ be the alphabet of $\mathcal L$. Let $\mathfrak g$ be the length-first numbering of $\mathcal{A}^\ast$. Note that since $\mathfrak g$ is bijective and monotonic with respect to the sub-string relation, the assumptions of Lemma~\ref{lem:regularity} are satisfied (for $S := \mathcal{A}^\ast$). It is then easy to derive the $\mathcal{L}^0$-regularity of $\mathfrak g$.
	
	\begin{cor}
		\label{cor:lengthfirstisregular}
		The length-first numbering $\mathfrak g$ is $\mathcal{L}^0$-regular.
	\end{cor}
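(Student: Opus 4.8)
The plan is to check, clause by clause, the definition of $\mathcal{L}^0$-regularity for the restriction of $\mathfrak g$ to the well-formed expressions of $\mathcal{L}$, treating the constant $\zero$, the unary symbol $\mathsf{S}$, and the binary symbols $\mathsf{A}$ and $\mathsf{M}$ in turn; throughout, write $N := |\mathcal{A}| \geq 2$. The constant clause is immediate, since $\mathfrak{g}$ maps into $\omega$, so $\mathfrak{g}(\zero) \geq 0 = \zero^{\mathbb{N}}$. For $\mathsf{M}$ there is nothing to do beyond quoting Pakhomov's Lemma~\ref{lem:regularity}: its hypotheses hold for $S := \mathcal{A}^\ast$ (the length-first $\mathfrak{g}$ is a bijection whose $\dot{\mathsf{A}}$- and $\dot{\mathsf{M}}$-tracking functions are monotonic, by monotonicity of $\mathfrak g$ for the sub-string relation), so $\mathfrak{g}(\mathsf{M} s t) > \mathfrak{g}(s) \cdot \mathfrak{g}(t) = \mathsf{M}^{\mathbb{N}}(\mathfrak{g}(s),\mathfrak{g}(t))$ for all terms $s,t$.

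For $\mathsf{A}$ I would not use Lemma~\ref{lem:regularity} directly — its stated bound $\mathfrak{g}(\mathsf{A} s t) > \mathfrak{g}(s)\cdot\mathfrak{g}(t)$ falls one short of $\mathfrak{g}(s) + \mathfrak{g}(t)$ exactly when $\mathfrak{g}(s)$ or $\mathfrak{g}(t)$ equals $1$ — but instead read the inequality off the estimates of Lemma~\ref{lem:lengthfirstestimates}. Since $|\mathsf{A} s t| = |s| + |t| + 1$, that lemma gives $\mathfrak{g}(\mathsf{A} s t) \geq \frac{N^{|s|+|t|+1}-1}{N-1}$, while $\mathfrak{g}(s) + \mathfrak{g}(t) < \frac{N^{|s|+1}-1}{N-1} + \frac{N^{|t|+1}-1}{N-1}$; a one-line computation using $N \geq 2$ and $|s|,|t| \geq 1$ (terms are non-empty) gives $N^{|s|+|t|+1} \geq N^{|s|+1} + N^{|t|+1}$, whence $\mathfrak{g}(\mathsf{A} s t) > \mathfrak{g}(s) + \mathfrak{g}(t) = \mathsf{A}^{\mathbb{N}}(\mathfrak{g}(s),\mathfrak{g}(t))$. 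Alternatively one can follow the proof of Lemma~\ref{lem:regularity}, which actually yields $\mathfrak{g}(\mathsf{A} s t) \geq (\mathfrak{g}(s)+1)(\mathfrak{g}(t)+1) - 1$, and combine this with $\mathfrak{g}(s),\mathfrak{g}(t) \geq 1$.

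The successor clause is the only one needing a moment's care. We want $\mathfrak{g}(\mathsf{S} t) > \mathfrak{g}(t) + 1$, i.e.\ $\mathfrak{g}(\mathsf{S} t) \geq \mathfrak{g}(t) + 2$, for every term $t$. From $|\mathsf{S} t| = |t| + 1$, Lemma~\ref{lem:lengthfirstestimates} gives $\mathfrak{g}(\mathsf{S} t) \geq \frac{N^{|t|+1}-1}{N-1}$ and $\mathfrak{g}(t) \leq \frac{N^{|t|+1}-1}{N-1} - 1$, which by themselves only yield $\mathfrak{g}(\mathsf{S} t) \geq \mathfrak{g}(t) + 1$. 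The missing unit comes from the observation that $\mathsf{S} t$ is never the length-first-least string of its length: that string is the power $a_0^{|t|+1}$ of the least letter $a_0$, and $\mathsf{S} t = a_0^{|t|+1}$ would force $\mathsf{S} = a_0$ and $t = a_0^{|t|}$, i.e.\ $t$ a positive power of the successor symbol — not a well-formed term. Hence $\mathfrak{g}(\mathsf{S} t) \geq \frac{N^{|t|+1}-1}{N-1} + 1 \geq \mathfrak{g}(t) + 2$, and the $\mathcal{L}^0$-regularity of $\mathfrak{g}$ follows. I expect this clause to be the only delicate point: the combinatorial weight all sits in the multiplication clause, which is handed to us whole by Lemma~\ref{lem:regularity} (and the crude length estimates really are too weak there, multiplication outgrowing the length bound by a constant factor in the wrong direction), while the constant and addition clauses are soft — the one trap being that the pigeonhole-style length count for $\mathsf{S}$ gives only a non-strict improvement unless one invokes the well-formedness of $t$ to exclude the single extremal configuration where equality could hold.
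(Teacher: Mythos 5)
Your proof is correct and follows the route the paper intends: the paper derives the corollary from Pakhomov's Lemma~\ref{lem:regularity} and declares the rest ``easy to derive'', and your clause-by-clause verification --- Pakhomov's lemma (or the sharper bound $(\xi(\alpha)+1)(\xi(\beta)+1)-1 \leq \xi(\mathsf{M}\alpha\beta)$ from its proof) for $\mathsf{M}$ and $\mathsf{A}$, and the length-first estimates of Lemma~\ref{lem:lengthfirstestimates} together with the observation that $\mathsf{S}t$ is never the least string of its length for the successor clause --- correctly supplies exactly the details being elided. Your diagnosis that the successor clause is the one place where the crude length bound falls one unit short of the required strict inequality, and that well-formedness of $t$ (or, alternatively, the impossibility of $t$ being simultaneously the first and last string of its length) closes the gap, is accurate.
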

	
	For any infinite subset $Y$ of $\omega$ we define ${\sf coll}_Y$ to be the unique
	order preserving bijection from $Y$ to $\omega$.
	
	\begin{remark}
		\label{remark:collapsednumbering}
		Let $X := \mathfrak g(\mathcal L)$.
		Consider the numbering $\mathfrak h := {\sf coll}_X \circ \mathfrak g$ of $\mathcal{L}$. 
		The numbering $\mathfrak h$ can be seen as a length-first enumeration of the well-formed expressions in $\mathcal{L}$ (as opposed to considering arbitrary strings of $\mathcal A$). Arguably, $\mathfrak h$ is a reasonable choice of a G\"odel numbering. 
		
		Is $\mathfrak h$ $\mathcal{L}^0$-regular? Lemma~\ref{lem:regularity} cannot be applied to $\mathfrak h$, since its domain $\mathcal{L}$ is not single-sorted and thus not closed under the constructor operations $\dot{{\sf A}}$ and $\dot{{\sf M}}$. 
		However, we can use this lemma to show that the length-first enumeration of the closed-term fragment of $\mathcal{L}^0$ is $\mathcal{L}^0$-regular. Let $\mathcal{L}^{\circ}$ be given by
		\begin{itemize}
			\item
			$t ::= \zero \mid {\sf S}t \mid {\sf A} t t \mid {\sf M} t t$
		\end{itemize}
		Let $X^\circ := \mathfrak g(\mathcal L^\circ)$. We take 
		$\mathfrak h^{\circ} := {\sf coll}_{X^\circ} \circ \mathfrak g$ to be the numbering of $\mathcal{L}^{\circ}$. Note $\mathcal{L}^{\circ}$ is closed under both $\dot{{\sf A}}$ and $\dot{{\sf M}}$. Hence, we can conclude from Lemma~\ref{lem:regularity} that $\mathfrak h^{\circ}$ is $\mathcal{L}^0$-regular. 
	\end{remark}
	
	\subsubsection{Example of a Non-Regular Numbering}
	\label{subsection:nondominatingnumberings}
	
	We define a non-regular numbering of $\mathcal{L}^0$, given in Polish notation (see Section~\ref{subsection:language}).\footnote{We are indebted to the MathOverflow users Matt F., Fedor Pakhomov and Konrad Zdanowski for their helpful suggestions which inspired our construction of the numbering ${\sf gn}_3$ (see \citep{Zdanowski2020}).} Let $\mathcal{L}^{\star}$ denote the following extension of $\mathcal{L}^0$:
	
	\begin{itemize}
		\item
		$x::= {\sf v} \mid x'$
		\item
		$t ::= \zero \mid x \mid  {\sf S}t \mid {\sf A}tt \mid {\sf A}d \mid {\sf M}tt \mid {\sf M} d$
		\item
		$A ::= \bot \mid \top \mid {=}tt \mid {=}d \mid \neg A \mid \wedge A A \mid \wedge D \mid \vee A 
		A \mid \vee D \mid {\to} A A \mid {\to} D \mid \forall x\,A \mid \exists x\, A$
		\item
		$d ::= \delta t$
		\item
		$D ::= \Delta A$
	\end{itemize}
	
	The language $\mathcal{L}^{\star}$ can be seen to implement the sharing of certain sub-expressions.
	That is, $\mathcal{L}^0$-expressions of the form $Bt t$, for any binary constructor symbol $B$,
	are \enquote{contracted} to expressions of the form $B \delta t$. This contraction and its reversal is performed
	by the translation functions $\mathfrak c \colon \mathcal{L}^0 \to \mathcal{L^\star}$ and
	$\mathfrak t \colon \mathcal{L^\star} \to \mathcal{L}^0$, defined recursively as follows:
	Both translations commute with all construction steps for $t$ and $A$ involving the zero-ary operations, the unary operations
	and the quantifiers. Moreover:
	
	\begin{multicols}{2}
		\begin{itemize}
			\item
			$\mathfrak c({\sf A} t u) :=
			\begin{cases}
			{\sf A} \delta \mathfrak c(t) & \text{if } t = u;\\
			{\sf A} \mathfrak c(t) \mathfrak c(u) & \text{if } t \neq u;\\
			\end{cases}$
			similarly, for  {\sf M};
			\item
			$\mathfrak c({=} t u) :=
			\begin{cases}
			{=} \delta \mathfrak c(t) & \text{if } t = u;\\
			{=} \mathfrak c(t) \mathfrak c(u) & \text{if } t \neq u;\\
			\end{cases}$
			\item
			$\mathfrak c(\wedge AB) :=
			\begin{cases}
			\wedge \Delta \mathfrak c(A) & \text{if } A = B;\\
			\wedge \mathfrak c(A) \mathfrak c(B) & \text{if } A \neq B;\\
			\end{cases}$	\\
			similarly, for $\vee$ and $\to$.
		\end{itemize}
		\columnbreak
		\begin{itemize}
			\item $\mathfrak t(\mathsf{A} \delta t) := \mathsf{A} \mathfrak t(t) \mathfrak t (t)$;\\
			similarly, for {\sf M};
			\item $\mathfrak t({=} \delta t) := {=} \mathfrak t(t) \mathfrak t (t)$;
			\item $\mathfrak t(\wedge\Delta A) := \wedge \mathfrak t(A) \mathfrak t(A)$;\\
			similarly for $\vee$ and $\to$.
		\end{itemize}
	\end{multicols}
	
	\noindent
	The translation $\mathfrak t$ is well-defined. This follows from unique readability, which is proved in the lemma below.
	
	\begin{lem}
		$\mathcal{L}^\star$ satisfies unique readability.
	\end{lem}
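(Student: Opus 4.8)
The plan is to run the familiar prefix-notation argument for unique readability, adapting it to the two features that distinguish $\mathcal{L}^{\star}$ from a textbook prefix language: the contraction symbols $\delta$ and $\Delta$, and the fact that each binary constructor now carries two productions (e.g.\ ${\sf A}tt$ and ${\sf A}d$). As a preliminary step I would tokenise the alphabet, absorbing each maximal block consisting of ${\sf v}$ followed by a string of primes into a single variable token; since a prime occurs only inside a variable and no production begins with a prime, this tokenisation is forced, and variables can henceforth be treated as atomic symbols. I would then record the elementary ``leading symbol'' observation, read directly off the grammar: every well-formed term begins with one of $\zero, {\sf v}, {\sf S}, {\sf A}, {\sf M}$; every well-formed formula begins with one of $\bot, \top, {=}, \neg, \wedge, \vee, {\to}, \forall, \exists$; every $d$-expression begins with $\delta$; and every $D$-expression begins with $\Delta$. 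In particular the first two sets are disjoint, so no string is at once a term and a formula, and --- the key flag fact --- no well-formed term or formula begins with $\delta$ or $\Delta$.

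Next I would introduce a weight function $w$ on the tokenised alphabet, setting $w = 1$ on $\zero$, on variable tokens, on $\bot$, on $\top$, and --- crucially --- on $\delta$ and on $\Delta$; $w = 0$ on ${\sf S}$ and on $\neg$; and $w = -1$ on ${\sf A}, {\sf M}, {=}, \wedge, \vee, {\to}, \forall, \exists$. Giving $\delta$ and $\Delta$ weight $1$ makes $\delta t$ carry weight $2$, exactly as $tt$ would, so a contracted production such as ${\sf A}\delta t$ is weight-balanced exactly like ${\sf A}tt$. Extending $w$ additively to strings, I would prove by a simultaneous structural induction over the sorts of $\mathcal{L}^{\star}$ that every well-formed term and every well-formed formula $E$ satisfies $w(E) = 1$ with $w(p) \le 0$ for every nonempty proper prefix $p$ of $E$, while every $d$-expression and every $D$-expression $E$ satisfies $w(E) = 2$ with $w(p) \le 1$ for every nonempty proper prefix $p$. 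Each case is a direct computation of the kind $w({\sf A}t_1t_2) = -1+1+1 = 1$, $w({\sf A}\delta t) = -1+2 = 1$, $w(\forall x A) = -1+1+1 = 1$, with the prefix bounds coming from the inductive hypothesis applied to the constituent subexpressions.

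From this I would extract \emph{prefix-freeness}: no well-formed term is a nonempty proper prefix of a well-formed term (such a prefix would have weight $\le 0 < 1$), and likewise for formulas, $d$-expressions and $D$-expressions. Unique readability then follows by induction on $|E|$. The leading symbol of $E$ fixes whether $E$ is a term or a formula and, except at the binary constructors, fixes the outer production outright. At a binary constructor $B$ the token immediately following $B$ decides between the $Btt$-style and the $Bd$-style production: the contracted production forces that token to be $\delta$ (or $\Delta$), whereas the uncontracted one forces it to be a term-leading (or formula-leading) symbol, and by the flag fact these alternatives are mutually exclusive. Once the production is fixed, the partition of the remaining string into argument subexpressions is forced --- a variable argument is the following variable token, a unary argument is the entire remainder, and a binary argument's cut point is pinned down by prefix-freeness, since two admissible cut points would display one well-formed subexpression as a proper prefix of another of the same sort. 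Applying the inductive hypothesis to these strictly shorter subexpressions completes the proof.

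The one real obstacle --- the only place where $\mathcal{L}^{\star}$ genuinely departs from the standard situation --- is the bookkeeping around $\delta$ and $\Delta$: one must choose their weight so that contraction is weight-neutral relative to duplication, and then use the flag fact to see that the two productions of each binary constructor can never compete for the same parse. The variable-and-prime tokenisation is a small nuisance that has to be cleared away first, so that proper prefixes of atomic expressions do not spoil the weight computation.
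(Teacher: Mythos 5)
Your proof is correct, and it takes a genuinely different route from the paper. The paper proves the lemma twice, both times by reduction to a language whose unique readability is taken as known: the first proof observes that every syntactical string of $\mathcal L^\star$ is a syntactical string of $\hat{\mathcal L}$, the arithmetical language extended with $\delta$ as a fresh constant and $\Delta$ as a fresh zero-ary proposition symbol, and then imports unique readability from $\hat{\mathcal L}$; the second proof maps $\mathcal L^\star$ injectively into a language $\breve{\mathcal L}$ with unary surrogates ${\sf A}_\delta$, ${\sf M}_\delta$, $\wedge_\Delta$, and so on. In both, the only $\mathcal L^\star$-specific work is exactly your ``flag fact'': no $\mathcal L^\star$-term can begin with $\delta$ and no $\mathcal L^\star$-formula with $\Delta$, so the contracted and uncontracted productions of a binary constructor can never compete for the same parse. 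You instead prove everything from scratch via the classical weight/prefix-freeness argument for Polish notation, choosing $w(\delta)=w(\Delta)=1$ so that contraction is weight-neutral relative to duplication --- which is, in effect, the arithmetic shadow of the paper's move of treating $\delta$ as a constant of $\hat{\mathcal L}$. What the paper's route buys is brevity: it delegates the combinatorial core (your tokenisation, the prefix bounds, the induction on length) to the background language, where it is presumed settled. What your route buys is self-containedness and an explicit mechanism: the weight bookkeeping makes visible exactly why $\delta t$ can stand in for $tt$, and the argument would survive verbatim if one did not wish to presuppose unique readability for $\hat{\mathcal L}$ or $\breve{\mathcal L}$. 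Your explicit treatment of the one non-prefix feature of the grammar --- the postfix primes on variables, handled by forced maximal-block tokenisation --- is a detail the paper's proofs silently inherit from the background language, and it is worth having on record.
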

	
	\begin{proof}[First Proof.]
		Consider the language $\hat{\mathcal L}$ that consists of the arithmetical language extended with
		a new constant $\delta$ and a new (zero-ary) proposition symbol $\Delta$.
		We prove by an easy induction that every syntactical string of $\mathcal L^\star$
		is a syntactical string of $\hat{\mathcal L}$. Now suppose an $\mathcal L^\star$ syntactical object has two readings.
		We consider e.g. the case where one of those readings is ${\sf M}\delta t$. Clearly, the other reading can only be of the form
		${\sf M}\delta t$ since no $\mathcal L^\star$-term can begin with $\delta$. 
		Now consider the case where one reading is ${\sf M}t_1t_2$. The other reading must be of the form ${\sf M}u_1u_2$.
		We apply the embedding into $\hat{\mathcal L}$ and unique reading in $\hat{\mathcal L}$, to see that $t_1=u_1$ and $t_2=u_2$.
	\end{proof}
	
	\begin{proof}[Second Proof.]
		Consider the language $\breve{\mathcal L}$ that consists of the arithmetical language extended with new
		unary functions ${\sf A}_\delta$ and ${\sf M}_\delta$, a new unary predicate symbol $=_\delta$ and new unary
		connectives $\wedge_{\Delta}$, $\vee_\Delta$ and $\to_\Delta$. We map $\mathcal L^\star$ into $\breve{\mathcal L}$
		by the obvious mapping $\mathfrak d$ that sends, e.g., ${\sf A}t_1t_2$ to ${\sf A}\mathfrak d(t_1) \mathfrak d(t_2)$ and
		${\sf A}\delta t $ to ${\sf A}_\delta \mathfrak d(t)$.
		It is easy to see that $\mathfrak d$ is well-defined (since no $\mathcal L^\star$-term can start with
		$\delta$ and no $\mathcal L^\star$-formula can start with $\Delta$) and injective.
		
		Now suppose an $\mathcal L^\star$ syntactical object has two readings.
		We consider, e.g., the case where one of those readings is ${\sf M}\delta t$. Clearly, the other reading can only be of the form
		${\sf M}\delta t$ since no $\mathcal L^\star$-term can begin with $\delta$.
		Now consider the case where one reading is ${\sf M}t_1t_2$. The other reading must be of the form ${\sf M}u_1u_2$.
		We apply the embedding $\mathfrak d$ into $\hat{\mathcal L}$ and unique reading in $\hat{\mathcal L}$, to see that $\mathfrak d(t_1)=\mathfrak d(u_1)$ 
		and $\mathfrak d(t_2)=\mathfrak d(u_2)$.  By the injectivity of $\mathfrak d$, we find $t_1=u_1$ and $t_2=u_2$.
	\end{proof}
	
	Let $\mathcal A$ be the alphabet of $\mathcal L^{\star}$ with some fixed ordering. Let $\mathfrak{g}$ be the length-first numbering of $\mathcal A^\ast$.
	We define our numbering ${\sf gn}_3$ of $\mathcal{L}^0$ by setting ${\sf gn}_3(\phi) := \mathfrak{g}(\mathfrak c(\phi))$.
	Clearly, ${\sf gn}_3$ is injective, effective and (strongly) monotonic. 
	We show that ${\sf gn}_3$ is not regular. 
	
	\begin{theorem}
		${\sf gn}_3$ is not regular. 
	\end{theorem}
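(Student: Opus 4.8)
The plan is to falsify the regularity inequality for the multiplication symbol ${\sf M}$. The point of ${\sf gn}_3$ is that the sharing mechanism built into $\mathcal{L}^\star$ contracts a term of the form ${\sf M}tt$ into the much shorter string ${\sf M}\delta\,\mathfrak{c}(t)$; consequently, iterated self-multiplication produces a sequence of closed $\mathcal{L}^0$-terms whose ${\sf gn}_3$-codes grow only single-exponentially, whereas regularity for ${\sf M}$ would force these codes to grow doubly exponentially. Playing the two rates off against each other yields the contradiction.

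Concretely, I would set $u_0 := \zero$ and $u_{n+1} := {\sf M}u_n u_n$ in Polish notation. Since the two immediate subterms of $u_{n+1}$ are literally the same string, the defining clause for $\mathfrak{c}$ gives $\mathfrak{c}(u_{n+1}) = {\sf M}\delta\,\mathfrak{c}(u_n)$, and hence $|\mathfrak{c}(u_n)| = 2n+1$ by induction. Writing $N \ge 2$ for the cardinality of the alphabet of $\mathcal{L}^\star$ and applying the upper estimate of Lemma~\ref{lem:lengthfirstestimates} to $\mathfrak{g}$, we get
\[
{\sf gn}_3(u_n) = \mathfrak{g}(\mathfrak{c}(u_n)) < \frac{N^{2n+2}-1}{N-1} \le N^{2n+2},
\]
so the sequence $\big({\sf gn}_3(u_n)\big)_n$ is bounded above by a single exponential in $n$.

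Now suppose, for contradiction, that ${\sf gn}_3$ is regular. The clause for the binary symbol ${\sf M}$, applied to $u_{n+1} = {\sf M}u_n u_n$, gives ${\sf gn}_3(u_{n+1}) > {\sf gn}_3(u_n) \cdot {\sf gn}_3(u_n) = {\sf gn}_3(u_n)^2$ for every $n$. In particular ${\sf gn}_3(u_1) > {\sf gn}_3(u_0)^2 \ge 0$ forces ${\sf gn}_3(u_1) \ge 1$, and then ${\sf gn}_3(u_2) > {\sf gn}_3(u_1)^2 \ge 1$ forces ${\sf gn}_3(u_2) \ge 2$; iterating the squaring inequality from there yields ${\sf gn}_3(u_n) > 2^{2^{n-2}}$ for all $n \ge 3$. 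Combined with the upper bound above, this gives $2^{2^{n-2}} < N^{2n+2}$ for all $n \ge 3$, which fails for large $n$: applying $\log_2$ gives $2^{n-2} < (2n+2)\log_2 N$, and the left-hand side eventually outgrows the right. Hence ${\sf gn}_3$ is not regular.

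I do not expect a real obstacle here. The two points that merit a moment's attention are: (i) that $\mathfrak{c}$ genuinely contracts $u_{n+1}$, which holds precisely because the two arguments of ${\sf M}$ in $u_{n+1}$ are the \emph{identical} term $u_n$, not merely two terms with the same value; and (ii) the choice of multiplication over addition — it is the ${\sf M}$-clause of regularity that forces squaring, and hence a doubly exponential lower bound, whereas the ${\sf A}$-clause would only force a single-exponential lower bound, which would not outrun the length-first upper estimate.
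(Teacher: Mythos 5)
Your proof is correct, and it reaches the conclusion by a genuinely different witness than the paper. The paper applies $\mathsf{M}$ \emph{once}, to the standard numeral $\underline{n}$: since $\mathfrak c(\underline n)=\underline n$ has length $n+1$, the lower estimate of Lemma~\ref{lem:lengthfirstestimates} makes ${\sf gn}_3(\underline n)$ at least of order $17^{n}$, while the contraction $\mathfrak c(\mathsf{M}\underline n\,\underline n)=\mathsf{M}\delta\underline n$ has length only $n+3$, so ${\sf gn}_3(\mathsf{M}\underline n\,\underline n)$ is at most of order $17^{n+4}$; for $n>4$ this is already below ${\sf gn}_3(\underline n)^2$, giving an \emph{explicit} term at which the regularity clause fails. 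You instead iterate the contraction starting from the short seed $\zero$, so that $|\mathfrak c(u_n)|=2n+1$ stays linear and ${\sf gn}_3(u_n)$ stays single-exponential, and then derive a contradiction with the doubly exponential growth that iterated squaring would force. Your route uses only the \emph{upper} estimate of Lemma~\ref{lem:lengthfirstestimates} (so it would apply verbatim to any string numbering that is at most exponential in length), at the price of being an asymptotic growth-rate argument that does not exhibit a specific violating instance; the paper's route needs both bounds of the lemma but produces a concrete counterexample with an explicit threshold. Both correctly isolate the two essential points you flag: the contraction fires because the two arguments of $\mathsf{M}$ are syntactically identical, and it is the multiplicative (squaring) clause of regularity, not the additive one, that outruns the length-first coding.
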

	
	\begin{proof}
		Since the size of $\mathcal A$ is 17, by Lemma~\ref{lem:lengthfirstestimates} we have for each $\alpha \in \mathcal A^\ast$
		\[
		\frac{17^{|\alpha|}-1}{16} \leq \mathfrak{g}(\alpha)  <  \frac{17^{|\alpha|+1}-1}{16}.
		\]
		Hence,
		\begin{itemize}
			\item ${\sf gn}_3(\underline{n}) = \mathfrak{g}(\underline{n}) \geq \frac{17^{n+1}-1}{16}$;
			\item ${\sf gn}_3(\mathsf{M}\underline{n} \, \underline{n}) = \mathfrak{g}(\mathsf{M} \delta \underline{n}) < \frac{17^{n+4}-1}{16}$.
		\end{itemize}
		But then, for all $n > 4$,\qedright
		\begin{align*}
		{\sf gn}_3(\underline{n}) \cdot {\sf gn}_3(\underline{n}) & \geq \left(\frac{17^{n+1}-1}{16}\right)^{2}\\
		& \geq \left(\frac{17^{n}}{16}\right)^{2}\\
		& \geq \frac{17^{2n-1}}{16}\\
		& \geq \frac{17^{n+4}-1}{16}\\
		& > {\sf gn}_3(\mathsf{M}\underline{n} \, \underline{n}).
		\end{align*}
	\end{proof}
	
	\noindent
	We note that the computation in the above proof
	does not depend on the specific size of the alphabet, here represented by the numbers $17$ and $16=17-1$.
	It works as long as the alphabet has at least two symbols. So, certainly, it works for all extensions for $\mathcal{L}^0$.
	
	\begin{remark}
		The present example of a non-regular G\"odel numbering touches lightly upon a theme that we do not discuss in this paper: G\"odel numberings based on alternative notions of
		syntax. Our treatment is mainly inspired by string approaches and algebraic approaches. In our example we look at sharing in the narrowest
		possible sense, but
		there are more principled and more encompassing approaches to sharing involving directed acyclic graphs and the like.
		
		Similarly, there are different approaches to variables like de Bruijn notations and the Peirce-Quine linking notation.
		In Peirce diagrams and in the clause-set notation, we abstract away from the order of finite conjunctions. Etcetera.
	\end{remark}
	
	We think that ${\sf gn}_3$ is a perfectly reasonable G\"odel numbering.
	We strengthen this intuition by showing that ${\sf gn}_3$ has good properties. 
	
	\begin{theorem}
		${\sf gn}_3$ is monotonic. Moreover, it is strongly monotonic w.r.t. standard and efficient numerals.
	\end{theorem}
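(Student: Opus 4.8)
The plan is to exploit the factorisation ${\sf gn}_3 = \mathfrak g \circ \mathfrak c$ and push all the work onto the length-first numbering $\mathfrak g$ and the contraction translation $\mathfrak c \colon \mathcal{L}^0 \to \mathcal{L}^\star$.

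\textbf{Monotonicity (M1--M3).} First I record that $\mathfrak g$ is monotonic with respect to the \emph{proper} sub-string relation: a proper sub-string of a finite string is strictly shorter, so this is immediate from the implication ``$|\alpha| < |\beta|$ implies $\mathfrak g(\alpha) < \mathfrak g(\beta)$'' of Lemma~\ref{lem:lengthfirstestimates}. Next I show, by induction along the sub-expression relation on $\mathcal{L}^0$ in Polish notation, that $\mathfrak c$ maps proper sub-expressions to proper sub-strings: whenever $\phi \prec \psi$, then $\mathfrak c(\phi)$ is a proper sub-string of $\mathfrak c(\psi)$. It suffices to inspect each construction clause. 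On the nullary and unary symbols and the quantifiers $\mathfrak c$ commutes with the constructor, and $\mathfrak c$ is the identity on variables, so there $\mathfrak c(\psi)$ is just the constructor symbol followed by $\mathfrak c$ of the single immediate constituent. For a binary constructor $B \in \{ {\sf A}, {\sf M}, {=}, \wedge, \vee, {\to} \}$, the string $\mathfrak c(\psi)$ is either $B\,\mathfrak c(t)\,\mathfrak c(u)$ (when the two constituents $t,u$ differ) or $B\,\delta\,\mathfrak c(t)$, respectively $B\,\Delta\,\mathfrak c(A)$ (when they coincide, so that only one constituent-qua-type survives). In every case $\mathfrak c$ of each immediate constituent occurs literally as a sub-string of $\mathfrak c(\psi)$, and $\mathfrak c(\psi)$ is strictly longer because of the leading constructor symbol; transitivity of the sub-string relation extends this to all $\phi \prec \psi$. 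Composing the two facts, $\phi \prec \psi$ in $\mathcal{L}^0$ implies that $\mathfrak c(\phi)$ is a proper sub-string of $\mathfrak c(\psi)$, hence $|\mathfrak c(\phi)| < |\mathfrak c(\psi)|$, hence ${\sf gn}_3(\phi) = \mathfrak g(\mathfrak c(\phi)) < \mathfrak g(\mathfrak c(\psi)) = {\sf gn}_3(\psi)$; this is exactly M1--M3.

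\textbf{Strong monotonicity for standard numerals.} Here I invoke M1 together with the chain of sub-terms $\zero = \underline 0 \prec \underline 1 \prec \dots \prec \underline n$, which gives ${\sf gn}_3(\underline n) \geq {\sf gn}_3(\zero) + n$. Since $\mathfrak c(\zero) = \zero$ is a one-letter string and $\mathfrak g(\varepsilon) = 0$, Lemma~\ref{lem:lengthfirstestimates} yields ${\sf gn}_3(\zero) = \mathfrak g(\zero) \geq 1$, whence ${\sf gn}_3(\underline n) \geq n + 1 > n$. Taking $n := {\sf gn}_3(A)$ gives M4($\underline{\,\cdot\,}$). This is the one place where one uses $\mathfrak g(\zero) \neq 0$, which excludes the degenerate situation noted after the definition of M4.

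\textbf{Strong monotonicity for efficient numerals.} This is the only non-routine step, and it cannot be obtained via domination: ${\sf gn}_3$ is not $\mathcal{L}^0$-dominating, since $\mathfrak c$ collapses towers of the form ${\sf M}tt$ to strings whose length is merely linear in the tower height. The key observation is that $\mathfrak c$ leaves the efficient numeral $\overline m$ \emph{unchanged}: the only binary constructor occurring in $\overline m$ is ${\sf M}$, and it always occurs as ${\sf M}\,({\sf SS}\zero)\,\beta$ with $\beta$ either $\zero$ or a term headed by ${\sf S}$ or ${\sf SS}$ applied to a product, so the two arguments of every ${\sf M}$ are syntactically distinct and no contraction clause of $\mathfrak c$ ever fires; as $\mathfrak c$ also fixes $\zero$ and ${\sf SS}\zero$ and commutes with ${\sf S}$, a straightforward induction gives $\mathfrak c(\overline m) = \overline m$. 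In Polish notation $\overline m$ consists of one block of length $5$ or $6$ per dyadic digit of $m$ followed by a trailing $\zero$, so $|\overline m| \geq 5k + 1$ where $k$ is the number of dyadic digits of $m$, while $m \leq 2^{k+1} - 2$ forces $2^{k+1} \geq m+2$. Hence, using $17 > 16 = 2^4$,
\[
{\sf gn}_3(\overline m) = \mathfrak g(\overline m) \;\geq\; \frac{17^{|\overline m|}-1}{16} \;\geq\; 16^{|\overline m|-1} \;\geq\; 2^{20k} \;=\; (2^k)^{20} \;\geq\; \Bigl(\frac{m+2}{2}\Bigr)^{20} \;>\; m
\]
for every $m \in \omega$. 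Taking $m := {\sf gn}_3(A)$ gives M4($\overline{\,\cdot\,}$), and the theorem follows.

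\textbf{Main obstacle.} Everything is bookkeeping about the length-first ordering except the efficient-numeral case, and there the crux is the identity $\mathfrak c(\overline m) = \overline m$: one must be sure that \emph{none} of the multiplications hidden inside an efficient numeral has two equal arguments, so that no contraction occurs and the length of $\overline m$ --- only logarithmic in $m$, but amplified superpolynomially by $\mathfrak g$ --- is preserved. Once that is in hand, the numeric estimate above is routine.
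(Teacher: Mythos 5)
Your proof is correct and follows the same route as the paper: factor ${\sf gn}_3 = \mathfrak g \circ \mathfrak c$, observe that $\mathfrak c$ preserves (direct) sub-expressions and fixes both kinds of numerals, and then transfer the relevant properties from the length-first numbering $\mathfrak g$. You merely make explicit two points the paper leaves as "easy to see" --- why no contraction clause ever fires inside an efficient numeral, and the numeric estimate $m < \mathfrak g(\overline m)$, which the paper instead inherits from the domination/regularity of $\mathfrak g$ established earlier.
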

	
	\begin{proof}
		We note that $\mathfrak c$ preserves direct sub-expressions, in other words, if $\tau$ is a direct sub-expression of $\rho$, then
		$\mathfrak{c}(\tau)$ is a direct sub-expression of $\mathfrak c(\rho)$. It follows that $\mathfrak c$ preserves sub-expressions.
		Since $\mathfrak g$ is monotonic, we may conclude that ${\sf gn}_3 = \mathfrak g \circ \mathfrak c$ is also monotonic.
		Moreover, it is easy to see that $\mathfrak c$ transforms standard numerals and efficient numerals identically. Hence, 
		${\sf gn}_3$ inherits strong monotonicity both with respect to standard numerals and to efficient numerals from $\mathfrak g$.
	\end{proof}
	
	\begin{remark}
		We can easily adapt the numerals introduced in Section~\ref{prideco} to the present situation in order to obtain
		numerals for which strong monotonicity fails w.r.t.\ ${\sf gn}_3$. The numeral function on $\mathcal{L}^0$ would look as
		follows.
		
		\begin{enumerate}[i.]
			\item ${\sf pd}^\star(0) := \zero$ and ${\sf pd}^\star(1) := {\sf S}\zero$.
			\item Suppose $n > 1$, and $n$ is not a power of a prime. Let $p$ be the smallest prime
			that divides $n$. Say $n = p^i\cdot m$, were $p$ does not divide $m$ and $i>0$. Then ${\sf pd}^\star(n) := \times {\sf pd}^\star(p^i){\sf pd}^\star(m)$.
			\item Suppose $n = p^i$, where $p$ is prime and $i>0$. We take  ${\sf pd}^\star(p) := \underline p$. In case $i = 2k$, for $k>0$,
			we set ${\sf pd}^\star(n) := \times {\sf pd}^\star(p^k){\sf pd}^\star(p^k)$. In case $i = 2k+1$, for $k>0$, we set 
			${\sf pd}^\star(n) := \times \underline p {\times} {\sf pd}^\star(p^k){\sf pd}^\star(p^k)$.
		\end{enumerate}
		It is easy to see that ${\sf pd}^\circ := \mathfrak c \circ {\sf pd}^\star$ behaves like {\sf pd} from Example~\ref{prideco}
		with ${\sf M}\delta$ in the role of {\sf Q}. It follows that ${\sf gn}_3(2^{2^n})$ grows exponentially in $n$.
	\end{remark}
	
	\subsubsection{Smash and Exponentiation}
	\label{subsubsection:smash}
	
	Let $\#$ be a binary function symbol with Nelson's (\citeyear{Nelson1986}) smash function, given by $(m,n) \mapsto 2^{|m| \cdot |n|}$, as its intended interpretation. Let ${\sf exp}$ be a unary function symbol for exponentiation. We set $\mathcal{L^{\#}} := \mathcal{L}^0 \cup \{ \# \}$ and $\mathcal{L^{\sf exp}} := \mathcal{L}^0 \cup \{ {\sf exp} \}$.
	We now show that there are standard numberings found in the literature which are neither $\mathcal{L^{\#}}$-regular nor ${\mathcal{L}}^{\mathsf{exp}}$-regular.
	
	Let $\xi$ be a numbering such that there exists a polynomial $P$ with
	\begin{equation}
	\tag{$\ast$}
	\label{equ:numberingorder}
	\xi(A) < 2^{P(|A|)} \text{ for all } A.
	\end{equation}
	For instance, \citep{buss:boun86} and \citep{buss:hand98} contain numberings of $\mathcal{L^{\#}}$ with this property. Moreover, length-first numberings, such as our numbering $\mathfrak g$, satisfy (\ref{equ:numberingorder}). To see this, let $\mathfrak g$ be a length-first numbering over an alphabet with size $N > 1$. 
	Consider any $m \in \omega$  such that $N \leq 2^m$. By Lemma~\ref{lem:lengthfirstestimates} we find, for all $A$:
	\[
	\mathfrak g(A) < \frac{N^{|A|+1}-1}{N-1} < N^{|A|+1} \leq  (2^m)^{|A|+1} = 2^{m \cdot |A|+m}.
	\]
	
	We now show that any numbering $\xi$ satisfying ($\ast$) is neither $\mathcal{L^{\#}}$-regular, nor $\mathcal{L^{\mathsf{exp}}}$-regular.
	
	\begin{theorem}
		$\xi$ is not $\mathcal{L^{\#}}$-regular.
	\end{theorem}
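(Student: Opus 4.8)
The plan is to argue by contradiction, exploiting only the regularity clause for the binary symbol $\#$ together with the size bound $(\ref{equ:numberingorder})$. Recall that if $\xi$ were $\mathcal{L}^{\#}$-regular, then for all $\mathcal{L}^{\#}$-terms $s,t$ we would have
\[
\xi(s \# t) > \#^{\mathbb{N}}(\xi(s),\xi(t)) = 2^{|\xi(s)| \cdot |\xi(t)|},
\]
where inside the exponent $|{\cdot}|$ denotes binary (dyadic) length. The idea is that iterating the \enquote{squaring} term $s \mapsto (s \# s)$ forces the binary lengths of the resulting codes to grow at a doubly exponential rate in the number of iterations, whereas $(\ref{equ:numberingorder})$ only permits a singly exponential rate, since each iteration merely doubles the string length.

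Concretely, first I would fix a closed $\mathcal{L}^0$-term $s_0$ with $\xi(s_0) \geq 2$; such a term exists already by injectivity of $\xi$, since among $\zero$, $\mathsf{S}\zero$ and $\mathsf{S}\mathsf{S}\zero$ the three codes are pairwise distinct, so at least one of them is $\geq 2$. Then I would set $s_{i+1} := (s_i \# s_i)$ and $a_i := |\xi(s_i)|$. Observe that $a_0 \geq 2$, that $|s_0|$ is a fixed constant, and that in either infix or Polish notation $|s_{i+1}| = 2|s_i| + c$ for a fixed constant $c$, hence $|s_i| \leq C \cdot 2^i$ with $C := |s_0| + c$. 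Applying the regularity inequality for $\#$ gives $\xi(s_{i+1}) > 2^{a_i^2}$, so $a_{i+1} \geq a_i^2$, and since $a_0 \geq 2$ an immediate induction yields $a_i \geq 2^{2^i}$ for all $i$.

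On the other hand, $(\ref{equ:numberingorder})$ gives $\xi(s_i) < 2^{P(|s_i|)}$, hence $a_i \leq P(|s_i|) + 1 \leq P(C \cdot 2^i) + 1$; since $P$ is a polynomial, the right-hand side is bounded by $2^{di}$ for a suitable constant $d$ and all sufficiently large $i$. Combining the two estimates gives $2^{2^i} \leq 2^{di}$, i.e.\ $2^i \leq di$, for all large $i$ — which is absurd. This contradiction shows that $\xi$ is not $\mathcal{L}^{\#}$-regular.

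I do not expect a serious obstacle here; the argument is essentially a bookkeeping exercise. The two points that want a little care are keeping the two notions of length apart (the binary length $|m|$ of a number entering Nelson's smash versus the string length $|A|$ of an expression entering $(\ref{equ:numberingorder})$) and checking that each $s_i$ is a genuine well-formed $\mathcal{L}^{\#}$-term, so that the $\#$-clause of regularity applies to it. The companion statement for $\mathcal{L}^{\mathsf{exp}}$ goes through analogously, and in fact more easily: iterating the unary term $s \mapsto \mathsf{exp}(s)$ raises the string length only by a constant at each step while, by regularity for $\mathsf{exp}$, it pushes the code up like an iterated exponential, which already after a bounded number of steps outstrips the polynomial-in-$i$ bound on $a_i$ delivered by $(\ref{equ:numberingorder})$.
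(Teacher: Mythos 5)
Your proof is correct, but it takes a genuinely different route from the paper's. The paper keeps the string length of the witnessing terms \emph{linear}: it iterates $\widehat{n+1} := ({\sf SS}\zero \mathbin{\#} \widehat{n})$, so that $|\widehat{n}| = 6n+3$ while ${\sf ev}(\widehat{n}) = 2^{2^{n+1}}$, and then invokes Lemma~\ref{lem:codelargerthanvalue} (regularity implies domination, i.e.\ ${\sf ev}(t)\leq\xi(t)$ for closed terms) to collide the doubly exponential \emph{value} with the singly exponential bound $(\ref{equ:numberingorder})$ on the code. You never pass through evaluation or domination at all: you apply the regularity clause for $\#$ directly to the \emph{codes}, iterate the squaring term $s\mapsto(s\mathbin{\#}s)$, and track the binary length $a_i = |\xi(s_i)|$, obtaining $a_{i+1}\geq a_i^2$, hence $a_i\geq 2^{2^i}$, against the bound $a_i\leq P(C\cdot 2^i)+1$ coming from $(\ref{equ:numberingorder})$. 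Both arguments are sound; your initialization ($\xi(s_0)\geq 2$ for at least one of three distinct closed terms, so $a_0\geq 2$) and the final comparison $2^{2^i}\leq 2^{di}$ check out, and you rightly keep the binary length of a number separate from the string length of a term. What each approach buys: the paper's is shorter because domination is already on the table and the value computation is immediate, and it yields witnesses of linear length; yours is more self-contained, using only the raw definition of regularity, at the cost of exponentially long witnesses and a bit more bookkeeping. (With your direct method you could also have reused the paper's linear-length terms: choosing the base term so that $|\xi({\sf SS}\zero)|\geq 2$, the clause $\xi(({\sf SS}\zero\mathbin{\#}s))>2^{|\xi({\sf SS}\zero)|\cdot|\xi(s)|}$ already forces at least geometric growth of $|\xi(s_i)|$, which beats the polynomial bound $P(6i+3)+1$.) Your closing remark on $\mathcal{L}^{\sf exp}$ likewise matches the paper's second theorem in spirit.
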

	
	\begin{proof}
		We define:
		\begin{itemize}
			\item
			$\widehat 1 := ({\sf SS}\zero \mathbin{\#} {\sf SS}\zero)$
			\item
			$\widehat{n+1} := ({\sf SS}\zero \mathbin{\#} \widehat{n})$.
		\end{itemize}
		
		It is easy to check that ${\sf ev}(\widehat n ) = 2^{2^{n+1}}$ and $|\widehat n| = 6n+3$.  Since $P$ is a polynomial there exists $n \in \omega$ such that $P(6n+3) < 2^{n+1}$ and hence
		\[
		\xi(\widehat{n}) < 2^{P(6n+3)} < 2^{2^{n+1}} = {\sf ev}(\widehat n ).
		\]
		Thus by, Lemma~\ref{lem:codelargerthanvalue}, $\xi$ is not $\mathcal{L^{\#}}$-regular.
	\end{proof}
	
	\begin{theorem}
		$\xi$ is not $\mathcal{L^{\mathsf{exp}}}$-regular.
	\end{theorem}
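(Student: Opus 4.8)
The plan is to run exactly the argument just given for the smash function, with an iterated-exponentiation family playing the role that $\widehat n$ plays there. Concretely, I would set $\widehat 1 := {\sf exp}({\sf SS}\zero)$ and $\widehat{n+1} := {\sf exp}(\widehat n)$, so that ${\sf ev}(\widehat n)$ is an exponential tower of twos of height $\sim n$ — in particular ${\sf ev}(\widehat 1) = 4$ and ${\sf ev}(\widehat{n+1}) = 2^{{\sf ev}(\widehat n)}$ — while each application of ${\sf exp}$ adds only a bounded number of symbols, so $|\widehat n| = cn+d$ for fixed constants $c,d$ depending only on the notation.

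The estimate to check is then that for every polynomial $P$ there is an $n$ with $2^{P(cn+d)} < {\sf ev}(\widehat n)$: the left-hand side is singly exponential in a polynomial in $n$, whereas the right-hand side grows like a tower of height $\sim n$ and hence eventually dominates it (indeed it already suffices that ${\sf ev}(\widehat n)$ is double-exponential in $n$, matching the bound used in the smash case). Combining this with the hypothesis (\ref{equ:numberingorder}) on $\xi$ gives, for a suitable $n$,
\[
\xi(\widehat n) < 2^{P(|\widehat n|)} = 2^{P(cn+d)} < {\sf ev}(\widehat n).
\]
By Lemma~\ref{lem:codelargerthanvalue}, an $\mathcal{L^{\mathsf{exp}}}$-regular numbering would satisfy M5$(\mathcal{L^{\mathsf{exp}}})$, hence ${\sf ev}(\widehat n) \leq \xi(\widehat n)$, contradicting the displayed chain of inequalities; so $\xi$ is not $\mathcal{L^{\mathsf{exp}}}$-regular.

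There is essentially no obstacle here: the only moving parts are the routine length bookkeeping for $|\widehat n|$ and the observation that iterated exponentiation outpaces $2^{P(\cdot)}$, both of which are immediate and entirely parallel to the smash argument. If one prefers to avoid mentioning towers at all, one can instead take $\widehat n := {\sf exp}({\sf exp}(\underline n))$, whose value is $2^{2^{n}}$ and whose length is $n + O(1)$; the inequality $P(n+O(1)) < 2^{n}$ for large $n$ then does the job, in complete analogy with the family used for $\#$.
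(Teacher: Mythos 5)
Your proposal is correct and follows essentially the same route as the paper: the paper uses the family $\wideparen{0} := {\sf SS}\zero$, $\wideparen{n+1} := \mathsf{exp}(\wideparen{n})$, with ${\sf ev}(\wideparen n ) = 2 \upuparrows (n+1)$ and $|\wideparen n| = 3n+3$, and concludes via the bound $(\ast)$ and Lemma~\ref{lem:codelargerthanvalue} exactly as you do. Your iterated-exponential family (and your double-exponential variant) differs only in inessential indexing and bookkeeping.
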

	
	\begin{proof}
		We define:
		\begin{itemize}
			\item
			$\wideparen{0} := {\sf SS}\zero$
			\item
			$\wideparen{n+1} := \mathsf{exp}(\wideparen{n})$.
		\end{itemize}
		
		We then have ${\sf ev}(\wideparen n ) = 2 \upuparrows (n+1)$ and $|\wideparen n| = 3n+3$.\footnote{We use here Knuth's up-arrow notation.} Since $P$ is a polynomial there exists $n \in \omega$ such that
		\[
		\xi(\wideparen{n}) < 2^{P(3n+3)} < 2 \upuparrows (n+1) = {\sf ev}(\wideparen n ).
		\]
		Thus, by Lemma~\ref{lem:codelargerthanvalue}, $\xi$ is not $\mathcal{L^{\mathsf{exp}}}$-regular.
	\end{proof} 
	
	\section{Type-free Truth Theories}
	\label{section:discussion}
	
	Building on work by \cite{Heck2007} and \cite{Schindler2015}, we now apply the results obtained in this paper to the study of type-free axiomatic truth theories. More specifically, we show that the consistency of certain type-free truth theories depends on the employed formalisation choices. Consider for instance the following two important principles of truth:\footnote{For the philosophical significance of these principles see \citep{Heck2004}.}
	\begin{description}
		\item[NOT] The negation of a sentence $A$ is true iff $A$ is not true;
		\item[DISQ] A sentence of the form $\gnum{t \text{ is true}}$ is true iff $t$ denotes a true sentence.
	\end{description}
	
	\noindent
	Taken together, these principles can intuitively be shown to be inconsistent. In order to show this, consider the Liar sentence\\
	$(L)$: $L$ is not true.
	
	Since $L$ is the sentence \enquote{$L$ is not true}, $L$ is true iff \enquote{$L$ is not true} is true. By NOT, \enquote{$L$ is not true} is true iff
	\enquote{$L$ is true} is not true. By DISQ, \enquote{$L$ is true} is not true iff, $L$ is not true. Hence, $L$ is true iff $L$ is not true. Contradiction (see \cite[p.12]{Heck2007}).
	
		Using a similar argument, we can intuitively also show the inconsistency of the following truth principle:	
		\begin{description}
			\item[NDISQ] A sentence of the form $\gnum{t \text{ is not true}}$ is true iff $t$ denotes a sentence which is not true.
		\end{description}
		\noindent
		Following \cite{Heck2007} and \cite{Schindler2015} we ask in which arithmetical framework this informal reasoning can be captured. Let $\mathcal{L}$ be any primitive recursive extension of ${\mathcal{L}^0}$, i.e., the result of adding function symbols for certain primitive recursive functions to ${\mathcal{L}^0}$. We set $\mathcal{L}_{\mathsf{T}} := \mathcal{L} \cup \{ \mathsf{T} \}$, where $\mathsf{T}$ is a new unary predicate symbol. The theory $\mathsf{R}_{{\mathcal{L}}}$ serves as a (weak) syntax theory in the formalisation of informal truth principles. Let moreover $\nu$ be a numeral function for $\mathcal{L}$ and let $\xi$ be a numbering of $\mathcal{L}_{\mathsf{T}}$. We now consider the following arithmetical formalisations of (NOT), (DISQ) and (NDISQ):
		\begin{description}
			\item[Not$(\nu,\xi)$] $\mathsf{T}(\nu(\xi(\neg A))) \leftrightarrow \neg \mathsf{T}(\nu(\xi(A)))$;
			\item[Disq$(\nu,\xi)$] $\mathsf{T}(\nu(\xi(\mathsf{T}(t)))) \leftrightarrow \mathsf{T}(t)$;
			\item[Disq$^\ast(\nu,\xi)$] $ \mathsf{T}(\nu(\xi(\mathsf{T}(\nu(\xi(A)))))) \leftrightarrow \mathsf{T}(\nu(\xi(A)))$;
			\item[NDisq$(\nu,\xi)$] $\mathsf{T}(\nu(\xi(\neg \mathsf{T}(t)))) \leftrightarrow \neg \mathsf{T}(t)$;
			\item[NDisq$^\ast(\nu,\xi)$] $\mathsf{T}(\nu(\xi(\neg \mathsf{T}(\nu(\xi(A)))))) \leftrightarrow \neg \mathsf{T}(\nu(\xi(A)))$.
		\end{description}
		where $A$ is any $\mathcal{L}_{\mathsf{T}}$-sentence and $t$ is any closed $\mathcal{L}$-term. Roughly speaking, Disq$(\nu,\xi)$ formalises (DISQ) by using all singular terms (of the given language) as names of sentences, while the formalisation Disq$^\ast(\nu,\xi)$ only employs certain canonical names, namely, $\nu$-numerals. The same holds for NDisq$(\nu,\xi)$ and NDisq$^\ast(\nu,\xi)$.  For every formalisation choice $\langle \mathcal{L}_{\mathsf{T}}, \nu, \xi \rangle$ we consider the formal truth theories	
		\begin{align*}
		\mathcal{S}(\mathcal{L}_{\mathsf{T}},\nu,\xi) & := \mathsf{R}_{{\mathcal{L}}_{\mathsf{T}}} + \text{Not}(\nu,\xi) + \text{Disq}(\nu,\xi);\\
		\mathcal{S}^\ast(\mathcal{L}_{\mathsf{T}},\nu,\xi) & := \mathsf{R}_{{\mathcal{L}}_{\mathsf{T}}} + \text{Not}(\nu,\xi) + \text{Disq}^\ast(\nu,\xi);\\
		\mathcal{T}(\mathcal{L}_{\mathsf{T}},\nu,\xi) & := \mathsf{R}_{{\mathcal{L}}_{\mathsf{T}}} + \text{NDisq}(\nu,\xi);\\
		\mathcal{T}^\ast(\mathcal{L}_{\mathsf{T}},\nu,\xi) & := \mathsf{R}_{{\mathcal{L}}_{\mathsf{T}}} + \text{NDisq}^\ast(\nu,\xi).
		\end{align*}
		
		\noindent
		In general, different formalisation choices $\langle \mathcal{L}_{\mathsf{T}}, \nu, \xi \rangle$ yield different theories.
		
		We now address the question under which constraints on the underlying formalisation choices $\langle \mathcal{L}_{\mathsf{T}}, \nu, \xi \rangle$ the $\mathcal{L}_{\mathsf{T}}$-theories $\mathcal{S}(\mathcal{L}_{\mathsf{T}},\nu,\xi)$, $\mathcal{S}^\ast(\mathcal{L}_{\mathsf{T}},\nu,\xi)$, $\mathcal{T}(\mathcal{L}_{\mathsf{T}},\nu,\xi)$ and $\mathcal{T}^\ast(\mathcal{L}_{\mathsf{T}},\nu,\xi)$ are inconsistent. That is, under which conditions do these theories provide a faithful formalisation of the intuitive reasoning regarding (NOT) \& (DISQ) as well as (NDISQ)? 
		
		The reader is reminded that ${\mathcal{L}}^+$ is a language which results from adding function symbols for primitive recursive functions to $\mathcal{L}^0$, such that ${\mathcal{L}}^+$ contains a term which represents the canonical strong diagonal function. Building on work by Heck and Schindler, the following answer can be extracted from the results of this paper. 
		
		\begin{theorem}
			Let $\nu$ be given such that $\nu = \underline{\cdot}$ or $\nu = \overline{\cdot}$. Let $\gamma$ be an $\mathcal E$-adequate and strongly monotonic numbering of ${\mathcal{L}}^+_{\mathsf{T}}$ for efficient numerals, which is $\mathcal{L}^0$-regular. We then have
			\begin{multicols}{2}
				\begin{enumerate}[1.]
					\item $\mathcal{S}({\mathcal{L}^0_{\mathsf{T}}},\nu,{\sf gn}_2) \vdash \bot$
					\item $\mathcal{S}^\ast({\mathcal{L}}^+_{\mathsf{T}},\overline{\cdot},{\sf gn}_0) \vdash \bot$
					\item $\mathcal{S}({\mathcal{L}^0_{\mathsf{T}}},\nu,\gamma) \not\vdash \bot$
					\item $\mathcal{S}^\ast({\mathcal{L}}^+_{\mathsf{T}},\nu,\gamma) \not\vdash \bot$
				\end{enumerate}
				\columnbreak
				\begin{enumerate}[1.]\setcounter{enumi}{4}
					\item $\mathcal{T}({\mathcal{L}^0_{\mathsf{T}}},\nu,{\sf gn}_2) \vdash \bot$
					\item $\mathcal{T}^\ast({\mathcal{L}}^+_{\mathsf{T}},\overline{\cdot},{\sf gn}_0) \vdash \bot$
					\item $\mathcal{T}({\mathcal{L}^0_{\mathsf{T}}},\nu,\gamma) \not\vdash \bot$
					\item $\mathcal{T}^\ast({\mathcal{L}}^+_{\mathsf{T}},\nu,\gamma) \not\vdash \bot$
				\end{enumerate}
			\end{multicols}
		\end{theorem}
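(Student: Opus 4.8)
\smallskip
\noindent\textbf{Structure of the argument.} The eight claims fall into two groups. The inconsistency claims (1), (2), (5), (6) come straight out of the fixed-point machinery of this paper: for $\mathsf{gn}_2$ from the Strong Diagonal Lemma~\ref{lem:strongdiaglem}, and for $\mathsf{gn}_0$ from its self-referentiality for efficient numerals (Lemma~\ref{lem:gn3isselfref}). The consistency claims (3), (4), (7), (8) are proved by exhibiting a model; they are \emph{possible} only because m-self-reference is unattainable for $\gamma$ (Lemma~\ref{lem:unattainabilityofSR}), but the model has to be built by hand.

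\smallskip
\noindent\textbf{The inconsistency claims.} For (1) and (5) the plan is to apply Lemma~\ref{lem:strongdiaglem} with $\mathcal{L} := \mathcal{L}^0_{\mathsf{T}}$ and $C(x) := \neg\mathsf{T}(x)$, obtaining a closed $\mathcal{L}^0$-term $t$ with $\mathsf{R}_{\mathcal{L}^0_{\mathsf{T}}} \vdash t = \underline{\mathsf{gn}_2(\neg\mathsf{T}(t))}$, hence also $\mathsf{R}_{\mathcal{L}^0_{\mathsf{T}}} \vdash t = \nu(\mathsf{gn}_2(\neg\mathsf{T}(t)))$ since $\mathsf{R}$ proves every true closed equation. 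For (5), instantiating $\mathrm{NDisq}(\nu,\mathsf{gn}_2)$ at the closed $\mathcal{L}^0$-term $t$ and rewriting the left side by this identity gives $\mathsf{T}(t) \leftrightarrow \neg\mathsf{T}(t)$. For (1), I would run the chain $\mathsf{T}(t) \leftrightarrow \mathsf{T}(\nu(\mathsf{gn}_2(\neg\mathsf{T}(t)))) \leftrightarrow \neg\mathsf{T}(\nu(\mathsf{gn}_2(\mathsf{T}(t)))) \leftrightarrow \neg\mathsf{T}(t)$, the first step being the term identity, the second $\mathrm{Not}$ at $A := \mathsf{T}(t)$, the third $\mathrm{Disq}$ at the term $t$. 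For (2) and (6) I would instead use that $\mathsf{gn}_0$ is self-referential for efficient numerals: applied to $A(x) := \neg\mathsf{T}(x)$ (Lemma~\ref{lem:gn3isselfref}) it yields $n$ with $\mathsf{gn}_0(\neg\mathsf{T}(\overline n)) = n$; writing $L := \neg\mathsf{T}(\overline n)$, so $\mathsf{gn}_0(L) = n$, every $\overline{\cdot}$-numeral occurring in the relevant instances of $\mathrm{NDisq}^\ast$, $\mathrm{Disq}^\ast$ and $\mathrm{Not}$ collapses syntactically to $\overline n$, so the same Liar chains go through by pure propositional logic and produce $\mathsf{T}(\overline n) \leftrightarrow \neg\mathsf{T}(\overline n)$.

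\smallskip
\noindent\textbf{The consistency claims.} For (3), (4), (7), (8) the plan is to build a model $\langle\mathbb{N},\mathcal{T}\rangle$ in which the arithmetical vocabulary gets its standard interpretation, so $\mathsf{R}_{\mathcal{L}^0_{\mathsf{T}}}$ (resp.\ $\mathsf{R}_{\mathcal{L}^+_{\mathsf{T}}}$) holds outright, and $\mathcal{T} \subseteq \omega$ is defined by recursion on $<$. The key is that each relevant truth axiom links a ``large'' code to a strictly smaller number: for $\mathrm{Not}$, $\gamma(A) < \gamma(\neg A)$ by M2; for $\mathrm{Disq}$ (closed $\mathcal{L}^0$-terms $t$), $\mathsf{ev}(t) \le \gamma(t) < \gamma(\mathsf{T}(t))$, the first inequality from $\mathcal{L}^0$-regularity (by induction on term structure; cf.\ Lemma~\ref{lem:codelargerthanvalue}) and the second from M3; for $\mathrm{NDisq}$, $\mathsf{ev}(t) \le \gamma(t) < \gamma(\mathsf{T}(t)) < \gamma(\neg\mathsf{T}(t))$; for $\mathrm{Disq}^\ast$ and $\mathrm{NDisq}^\ast$, $\gamma(A) \le \gamma(\nu(\gamma(A))) < \gamma(\mathsf{T}(\nu(\gamma(A)))) < \gamma(\neg\mathsf{T}(\nu(\gamma(A))))$, where the first inequality is M4$^\ast(\nu)$ — available since M1 implies M4$(\underline{\cdot})$ and $\gamma$ is strongly monotonic for efficient numerals — and the rest is M3 and M2. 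Since $\gamma$ and the numeral functions are injective, a number codes at most one expression and fits at most one of these shapes, and a negation code is never an atomic code, so no number is governed by two incompatible clauses. One then defines $m \in \mathcal{T}$ by cases on the unique expression coded by $m$, using the appropriate biconditional, which refers only to numbers $< m$, and puts $m \notin \mathcal{T}$ whenever no scheme governs $m$ (in particular when $m$ is not a code of a sentence). Reading the inequalities backwards shows $\langle\mathbb{N},\mathcal{T}\rangle$ satisfies every instance of the scheme(s) in question, so each of the four theories is consistent.

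\smallskip
\noindent\textbf{Main obstacle.} I expect the delicate part to be the consistency half: pinning down exactly which structural hypothesis makes each scheme's dependency strictly decreasing, and in particular noting that the non-starred theories genuinely need $\mathcal{L}^0$-regularity (arbitrary closed $\mathcal{L}^0$-terms occur under $\mathsf{T}$, and only regularity bounds $\mathsf{ev}(t)$ by $\gamma(t)$), whereas the starred theories get by with strong monotonicity for numerals. Making the case analysis on the shape of $m$ tight enough to guarantee that the recursion is both well-founded and unambiguous is where the real care is needed. By contrast, the inconsistency claims are essentially mechanical once the fixed-point term from Lemma~\ref{lem:strongdiaglem} or the self-referential code from Lemma~\ref{lem:gn3isselfref} is in hand.
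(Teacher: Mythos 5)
Your proposal is correct and follows essentially the same route as the paper: the inconsistency claims (1), (2), (5), (6) are obtained exactly as in the paper's proof, from Lemma~\ref{lem:strongdiaglem} for ${\sf gn}_2$ and from Lemma~\ref{lem:gn3isselfref} for ${\sf gn}_0$, and your Liar chains coincide with the ones the paper writes out for (2). The only difference is one of presentation: for the consistency claims (3), (4), (7), (8) the paper simply cites Heck's Theorem~1 and Schindler's Proposition~2.2.6 (adding that $\nu$-numerals are $\mathcal{L}^0$-terms to cover the starred cases), whereas you spell out the underlying well-founded recursion on codes --- using M2, M3, M5($\mathcal{L}^0$) via Lemma~\ref{lem:codelargerthanvalue}, and M4$^\ast(\nu)$ to make the dependency strictly decreasing --- which is precisely the content of those cited proofs.
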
 
		
		\begin{proof}
			(3) follows from \cite[Theorem 1]{Heck2007} and (7) is Proposition 2.2.6 in \citep{Schindler2015}. (4) and (8) can be shown similarly, using the observation that $\nu$-numerals are $\mathcal{L}^0$-terms. See also \cite[p.~13]{Heck2007}. 
			
			We prove (2). By Lemma~\ref{lem:gn3isselfref} there exists $n \in \omega$ such that (Eq) $n = {\sf gn}_0(\neg \mathsf{T}(\overline{n}))$. We can then derive in $\mathcal{S}^\ast({\mathcal{L}}^+_{\mathsf{T}},\overline{\cdot},{\sf gn}_0)$ the following contradiction
			\begin{align*}
			\mathsf{T}(\overline{n}) & \leftrightarrow \mathsf{T}(\overline{{\sf gn}_0(\neg \mathsf{T}(\overline{n}))}), \text{ using (Eq)}\\
			& \leftrightarrow \neg \mathsf{T}(\overline{{\sf gn}_0(\mathsf{T}(\overline{n}))}), \text{ using Not$(\overline{\cdot},{\sf gn}_0)$}\\
			& \leftrightarrow \neg \mathsf{T}(\overline{{\sf gn}_0(\mathsf{T}(\overline{{\sf gn}_0(\neg \mathsf{T}(\overline{n}))}))}), \text{ using (Eq)}\\
			& \leftrightarrow \neg \mathsf{T}(\overline{{\sf gn}_0(\neg \mathsf{T}(\overline{n}))}), \text{ using Disq$^\ast(\overline{\cdot},{\sf gn}_0)$}\\
			& \leftrightarrow \neg \mathsf{T}(\overline{n}), \text{ using (Eq)}
			\end{align*}
			The proof of (6) proceeds similarly. (1) and (5) can be shown by applying Lemma~\ref{lem:strongdiaglem}.
		\end{proof}
		
		\noindent
		The results of the theorem can be summarised in the following table:
		
		\begin{center}
			\begin{tabular}{ |l|l|l| } 
				\hline
				Constraints on numberings & consistent & inconsistent\\
				\hline \Tstrut
				$\mathcal E$-adequate \& strongly~monotonic & $\mathcal{S}({\mathcal{L}^0_{\mathsf{T}}},\nu,\gamma)$  & $\mathcal{S}({\mathcal{L}^0_{\mathsf{T}}},\nu,{\sf gn}_2)$  \\
				& $\mathcal{T}({\mathcal{L}^0_{\mathsf{T}}},\nu,\gamma)$ &  $\mathcal{T}({\mathcal{L}^0_{\mathsf{T}}},\nu,{\sf gn}_2)$\Tstrut\\
				\hline \Tstrut
				$\mathcal E$-adequate \& monotonic & $\mathcal{S}^\ast({\mathcal{L}}^+_{\mathsf{T}},\nu,\gamma)$ & $\mathcal{S}^\ast({\mathcal{L}}^+_{\mathsf{T}},\overline{\cdot},{\sf gn}_0)$ \\
				& $\mathcal{T}^\ast({\mathcal{L}}^+_{\mathsf{T}},\nu,\gamma)$ & $\mathcal{T}^\ast({\mathcal{L}}^+_{\mathsf{T}},\overline{\cdot},{\sf gn}_0)$\Tstrut \\
				\hline
			\end{tabular}
		\end{center}
		\
		\\
		\noindent
		Thus, the conditions of $\mathcal E$-adequacy and monotonicity taken together are not restrictive enough to determine the consistency of $\mathcal{S}^\ast({\mathcal{L}}^+_{\mathsf{T}},\overline{\cdot},\xi)$ and of $\mathcal{T}^\ast({\mathcal{L}}^+_{\mathsf{T}},\overline{\cdot},\xi)$. Moreover, the conditions of $\mathcal E$-adequacy and strong monotonicity for efficient numerals are together not restrictive enough to determine the consistency of $\mathcal{S}(\mathcal{L}^0_{\mathsf{T}},\nu,\xi)$ and of $\mathcal{T}(\mathcal{L}^0_{\mathsf{T}},\nu,\xi)$.

	\section{Summary and Perspectives}
	
	In our paper we have studied various requirements on formalisation choices, such as the G\"odel numbering, under which m-self-reference is attainable in arithmetic. In particular, we have examined the admissibility of self-referential numberings, which provide immediate means to formalise m-self-reference. We have shown that monotonicity and $\mathcal E$-adequacy taken together do not exclude the self-referentiality of a numbering. While the constraint of strong monotonicity excludes the self-referentiality of numberings, we have shown that it does not block the attainability of m-self-reference in the basic language $\mathcal{L}^0$. As a counterpoint, we have shown that the demand that the numbering is regular does indeed render m-self-reference in $\mathcal{L}^0$ unattainable, but that, on the other hand, some completely decent numberings are non-regular. The obtained results show that the attainability of m-self-reference in $\mathcal{L}^0$ is more sensitive to the underlying formalisation choices than widely believed. Finally, we have shown that this sensitivity also impinges on the formal study of certain principles of self-referential truth. Namely, whether or not certain axiomatic theories of self-referential truth are consistent is not determined by the constraints of $\mathcal E$-adequacy and (strong) monotonicity alone.
	
	In the paper we formulated some open questions that we repeat here.
	\begin{enumerate}[A.]
		\item
		Are there $\pol$-adequate numberings $\xi$ for which $\xi\circ(\underline \cdot)$ is p-time?
		\item Can we find $\pol$-adequate self-referential G\"odel numberings that are monotonic?
		\item Can we find $\pol$-adequate G\"odel numberings which satisfy the Strong Diagonal Lemma~\ref{lem:strongdiaglem} that are strongly monotonic?
		\item Is the numbering $\mathfrak h$ (defined in Remark~\ref{remark:collapsednumbering}) $\mathcal{L}^0$-regular?
	\end{enumerate}
	
	Our paper is a primarily technical contribution providing \emph{data for philosophy}.
	We have only gestured at philosophical motivations of the various constraints. 
	We do believe that the motivations given in the literature for various constraints are extremely thin. 
	However, we submit that this does not undermine the value of our contribution too much,
	since our work will still provide a philosopher who sets out to carefully argue for a
	constraint with an impression of what such a contraint does or does not mean. Moreover,
	both the self-referential numberings themselves and the study of various constraints on
	numberings have some technical interest entirely independent of the philosophical
	motivation of various constraints.
	
	\bibliographystyle{apalike}
	\bibliography{provint,filo,mybib}
	
	\appendix
	
	\section{An Alternative Construction of a Monotonic Self-Referential Numbering for Strings}
	\label{section:construction1}
	We provide an alternative construction of a coding for strings which is self-referential for efficient numerals and monotonic with respect to the sub-expression relation. We believe that the inclusion of this construction is instructive, since in some sense it proceeds in a \enquote{bottom-up} and more direct fashion than the constructions given in Section~\ref{section:construction2}~\&~\ref{section:construction2B}. 
	
	\subsection{The Length-First Ordering and ${\sf gn}_4$}
	\label{subsection:length-first}
	Let $\mathcal A$ be the alphabet of the arithmetical language $\mathcal{L}^0$ introduced in Subsection~\ref{subsection:language} and let $\mathcal A^+$ be $\mathcal A$ extended with a fresh constant $\mathsf{c}$ and a fresh separator-symbol ;. We remind the reader of the fact that
	$\mathcal A$ has 17 letters. Thus, $\mathcal A^+$ has 19 letters.
	We suppose some ordering of $\mathcal A^+$ is given. Let ${\sf gn}_4$ be the length-first ordering of strings of $\mathcal A^+$. We write $|\alpha|$ for the length of $\alpha$ as before.
	
	We remind the reader that, by Lemma~\ref{lem:lengthfirstestimates}, we have
	\[ \frac{19^{|\alpha|}-1}{18} \leq {\sf gn}_4(\alpha)  <  \frac{19^{|\alpha|+1}-1}{18}.\]
	It follows that $|\alpha| \leq {\sf gn}_4(\alpha)$.
	Moreover, whenever $|\alpha| < |\beta|$, then
	${\sf gn}_4(\alpha) < {\sf gn}_4(\beta)$.
	
	We define $\widetilde n$ as before and remind the reader that ${\sf ev}(\widetilde n ) = 2^n -1$ and $|\widetilde n| = 7n+1$ (see Lemma~\ref{lolligesmurf}).
	
	\subsection{On ${\sf gn}_5$}
	We say that $\alpha$ is \emph{acceptable} if it is of the form $\gamma;\delta$, where
	$\gamma$ and $\delta$ are ;-free and where $\gamma$ is a sub-string of $\delta$. 
	(Here we allow $\gamma = \delta$.)
	
	Suppose $\alpha_n=\gamma;\delta$ is acceptable.
	We take $\beta_n := \gamma[\mathsf{c}:= \overline{2^{\,{\sf gn}_4(\delta;\delta)}}]$.
	In other words, when $\delta;\delta = \alpha_k$, then $\beta_n :=  \gamma[\mathsf{c}:= {\sf S}\widetilde k]$. 
	In all other cases we set $\beta_n$ to a \emph{don't care} value, say $\varepsilon$, the empty string.
	
	\begin{lemma}
		$(\beta_n)_{n\in \omega}$ enumerates the $\mathcal A$-strings.
	\end{lemma}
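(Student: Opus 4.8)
The plan is to show that $n\mapsto\beta_n$ is a well-defined map into $\mathcal A^\ast$ that is onto $\mathcal A^\ast$; both halves are short, since the construction is arranged precisely so that the ``diagonal'' inputs $\eta;\eta$ return every $\mathcal A$-string verbatim. First I would record two bookkeeping facts. (a) The symbols $\mathsf c$ and $;$ are fresh, so neither lies in $\mathcal A$; in particular no $\mathcal A$-string contains either of them. (b) Every acceptable string $\alpha_n$ has a \emph{unique} decomposition $\gamma;\delta$: since the prospective $\gamma$ contains no $;$, it must be the initial segment of $\alpha_n$ preceding the first occurrence of $;$, with $\delta$ the remaining suffix (which acceptability then forces to be $;$-free as well). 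Hence $\beta_n$ is unambiguously defined.

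Next I would check $\beta_n\in\mathcal A^\ast$ for every $n$. If $\alpha_n$ is not acceptable this is immediate, as $\beta_n=\varepsilon$. If $\alpha_n=\gamma;\delta$ is acceptable, then $\gamma$ is a string over $\mathcal A\cup\{\mathsf c\}$, while the replacement string $\overline{2^{\,{\sf gn}_4(\delta;\delta)}}$ is a closed $\mathcal L^0$-term, hence a string over $\mathcal A$. Replacing every occurrence of the letter $\mathsf c$ in $\gamma$ by this $\mathcal A$-string therefore yields a member of $\mathcal A^\ast$.

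For surjectivity, let $\eta\in\mathcal A^\ast$ be arbitrary. By (a), $\eta$ contains no $;$, and $\eta\sqsubseteq\eta$ trivially, so $\eta;\eta$ is acceptable with $\gamma=\delta=\eta$. Put $n:={\sf gn}_4(\eta;\eta)$, so $\alpha_n=\eta;\eta$. By construction $\beta_n=\eta[\mathsf c:=\overline{2^{\,{\sf gn}_4(\eta;\eta)}}]$; but $\eta$ contains no $\mathsf c$ by (a), so the substitution is vacuous and $\beta_n=\eta$. Hence every $\mathcal A$-string appears in the list, and together with the previous paragraph this gives $\{\beta_n\mid n\in\omega\}=\mathcal A^\ast$. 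I do not expect a genuine obstacle: the one point worth stating carefully is the uniqueness of the decomposition $\gamma;\delta$, which is what makes $\beta_n$ well-defined, and after that everything follows directly from the freshness of $\mathsf c$ and $;$.
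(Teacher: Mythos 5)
Your proof is correct and follows essentially the same route as the paper's (which is considerably terser): the forward direction is that each $\beta_n$ is $;$- and $\mathsf c$-free, and surjectivity comes from the diagonal inputs $\theta;\theta$, where the substitution is vacuous. The extra care you take over the uniqueness of the decomposition $\gamma;\delta$ is a reasonable addition the paper leaves implicit.
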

	
	\begin{proof}
		Clearly the $\beta_n$ neither contain ; nor $\mathsf{c}$, so they are $
		\mathcal A$-strings. 
		
		\medent
		Conversely, suppose $\theta$ is an $\mathcal A$-string.
		Let $\alpha_k = \theta;\theta$. Then, clearly, $\beta_k = \theta$.
	\end{proof}
	
	\noindent
	We note that the enumeration $(\beta_n)_{n\in \omega}$ has repetitions.
	We proceed with a technical lemma. 
	
	\begin{lemma}\label{smurferella}
		Suppose $\gamma;\delta$ is adequate and ${\sf gn}_4(\delta;\delta)= n$. Then, 
		$|\widetilde n| > |\delta;\delta| > |\gamma|$, 
		and, hence, ${\sf S}\widetilde{n}$ does not occur in $\gamma$. 
	\end{lemma}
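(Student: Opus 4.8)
The plan is to deduce everything from two facts already in hand: the length formula $|\widetilde n| = 7n+1$ from Lemma~\ref{lolligesmurf}, and the estimate $|\alpha| \leq {\sf gn}_4(\alpha)$ for the length-first ordering over the $19$-letter alphabet $\mathcal A^+$ (the instance of Lemma~\ref{lem:lengthfirstestimates} recorded in Subsection~\ref{subsection:length-first}). Once these are invoked, the lemma reduces to a short chain of length comparisons.

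First I would use the hypothesis ${\sf gn}_4(\delta;\delta) = n$ together with the length estimate to get $|\delta;\delta| \leq {\sf gn}_4(\delta;\delta) = n < 7n+1 = |\widetilde n|$, which already yields $|\widetilde n| > |\delta;\delta|$. Next, since $\gamma;\delta$ is acceptable, $\gamma$ is a sub-string of $\delta$, so $|\gamma| \leq |\delta|$, and hence $|\delta;\delta| = 2|\delta| + 1 > |\delta| \geq |\gamma|$. Chaining these gives $|\widetilde n| > |\delta;\delta| > |\gamma|$, as claimed. For the final clause, note that ${\sf S}\widetilde n$ has length $|\widetilde n| + 1 > |\gamma|$, so it is strictly longer than $\gamma$ and therefore cannot occur as a sub-string of $\gamma$.

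There is essentially no obstacle here; the only point requiring a moment's care is that the length bound being invoked is the one for the extended alphabet $\mathcal A^+$ (which includes the separator $;$ and the fresh constant $\mathsf c$), so that it legitimately applies to the string $\delta;\delta$. Once that is noted, the remainder is immediate arithmetic.
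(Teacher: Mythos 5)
Your proof is correct and follows essentially the same route as the paper's: both arguments chain $|\gamma| < |\delta;\delta| \leq {\sf gn}_4(\delta;\delta) = n < 7n+1 = |\widetilde n|$, using the length-first bound of Lemma~\ref{lem:lengthfirstestimates} and the length formula for $\widetilde n$. Your explicit remarks on the acceptability hypothesis and the final clause about ${\sf S}\widetilde n$ are correct elaborations of steps the paper leaves implicit.
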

	
	\begin{proof}
		We have:\qedright
		\begin{eqnarray*}
			|\gamma| & < & |\delta; \delta| \\
			& \leq & \frac{19^{|\delta;\delta|}-1}{18}\\
			&\leq & n \\
			& < & 7n+1\\
			& = & |\widetilde{n}|
		\end{eqnarray*}
	\end{proof}
	
	
	
	\begin{lemma}\label{gourmetsmurf}
		Consider acceptable $\zeta := \gamma;\delta$ and $\theta := \mu;\eta$. Let ${\sf gn}_4(\delta;\delta)=n$ and
		${\sf gn}_4(\eta;\eta)=m$.
		Suppose that
		$\mathsf{c}$ occurs both in $\gamma$ and in $\mu$ and
		$\nu := \gamma[\mathsf{c} := {\sf S} \widetilde n] = \mu[\mathsf{c}:= {\sf S} \widetilde m]$.
		Then, $\zeta = \theta$.
	\end{lemma}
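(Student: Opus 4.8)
The plan is to unfold $\nu$ simultaneously according to the two substitutions and to show that the substituted copies of ${\sf S}\widetilde n$ and of ${\sf S}\widetilde m$ occupy exactly the same positions in $\nu$; this will pin down first $n=m$, then $\delta=\eta$, and finally $\gamma=\mu$. So I would begin by writing $\gamma=a_0{\sf c}\,a_1{\sf c}\cdots{\sf c}\,a_r$, the decomposition into maximal ${\sf c}$-free (hence ;-free) factors, where $r\geq 1$ because ${\sf c}$ occurs in $\gamma$, and likewise $\mu=b_0{\sf c}\,b_1\cdots{\sf c}\,b_s$ with $s\geq 1$. Substituting gives two readings of $\nu$: one as $a_0({\sf S}\widetilde n)a_1\cdots({\sf S}\widetilde n)a_r$, with $r$ occurrences of ${\sf S}\widetilde n$ which I will call \emph{$\gamma$-blocks}, and one as $b_0({\sf S}\widetilde m)b_1\cdots({\sf S}\widetilde m)b_s$, with $s$ \emph{$\mu$-blocks}. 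Two preliminary observations: $n,m\geq 1$, since $\delta;\delta$ and $\eta;\eta$ are non-empty and a length-first index of a non-empty string is $\geq 1$ (Lemma~\ref{lem:lengthfirstestimates}); and, by Lemma~\ref{smurferella}, $|a_i|\leq|\gamma|<7n+1$ and $|b_j|\leq|\mu|<7m+1$, so none of the literal factors $a_i$, $b_j$ is long enough to contain an occurrence of ${\sf S}\widetilde n$ or of ${\sf S}\widetilde m$ (each of which has length $\geq 7\cdot1+2$).

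Next I would prove the key claim that \emph{every} occurrence of ${\sf S}\widetilde n$ in $\nu$ is one of the $\gamma$-blocks (and symmetrically for ${\sf S}\widetilde m$ and the $\mu$-blocks). Such an occurrence $o$ cannot be disjoint from all $\gamma$-blocks, for then it would sit inside a single factor $a_i$ — the $\gamma$-blocks being non-empty separate the $a_i$-regions — contradicting the length bound above. Hence $o$ overlaps some $\gamma$-block $B$; since $o$ and $B$ are both occurrences of ${\sf S}\widetilde n$ and $n\neq 0$, Lemma~\ref{grumpysmurf} forces $o=B$ as occurrences.

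The heart of the argument — and what I expect to be the main obstacle — is extracting $n=m$ from this. Assuming without loss of generality $n\leq m$, I would take the first $\mu$-block $B'$, an occurrence of ${\sf S}\widetilde m$ of length $7m+2$; since $|a_i|\leq|\gamma|<7n+1\leq 7m+1$, it is again too long to fit inside any $a_i$, so, exactly as above, it overlaps some $\gamma$-block $B$, an occurrence of ${\sf S}\widetilde n$. Because $n,m\neq 0$, Lemma~\ref{grumpysmurf} applies and gives $B=B'$ as occurrences, so ${\sf S}\widetilde n$ and ${\sf S}\widetilde m$ have equal length and $n=m$. Then ${\sf gn}_4(\delta;\delta)={\sf gn}_4(\eta;\eta)$, and injectivity of the length-first numbering together with splitting at the unique ``;'' yields $\delta=\eta$.

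Finally, with $n=m$ in hand, each $\mu$-block is an occurrence of ${\sf S}\widetilde m={\sf S}\widetilde n$ in $\nu$, hence a $\gamma$-block by the key claim, and symmetrically each $\gamma$-block is a $\mu$-block; so the $\gamma$-blocks and the $\mu$-blocks are the same occurrences. Reading $\nu$ from left to right then forces $r=s$ and $a_i=b_i$ for every $i$, whence $\gamma=a_0{\sf c}\,a_1\cdots{\sf c}\,a_r=b_0{\sf c}\,b_1\cdots{\sf c}\,b_s=\mu$. Combining, $\zeta=\gamma;\delta=\mu;\eta=\theta$. The only delicate points are the bookkeeping of the length inequalities (all powered by Lemma~\ref{smurferella}) and keeping the hypotheses $n,m\neq 0$ of Lemma~\ref{grumpysmurf} in force throughout.
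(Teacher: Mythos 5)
Your proof is correct and follows essentially the same route as the paper's: both rest on Lemma~\ref{grumpysmurf} (overlapping occurrences of ${\sf S}\widetilde n$ and ${\sf S}\widetilde m$ coincide) together with the length bound $|\gamma|<|\widetilde n|$ from Lemma~\ref{smurferella} to identify the substituted blocks and force $n=m$, and then recover $\gamma=\mu$ by replacing those blocks by $\mathsf{c}$. You merely spell out explicitly (via the decomposition into maximal $\mathsf{c}$-free factors) what the paper's two-case argument leaves implicit, and your one loose preliminary remark (that no $a_i$ can contain ${\sf S}\widetilde m$) is harmless since you only ever use the diagonal instances plus the cross instance you re-justify under the assumption $n\leq m$.
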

	
	\begin{proof}
		We assume the conditions of the lemma. 
		Note that $n$ and $m$ cannot be 0.
		In case $n=m$, we see, by Lemmas~\ref{grumpysmurf} and \ref{smurferella}, that both $\gamma$ and $\epsilon$ are the
		result of replacing ${\sf S}\widetilde{n}$ in $\nu$ by $\mathsf{c}$.
		It follows that $\gamma = \mu$. Trivially, $\delta = \eta$.
		So, $\zeta=\theta$.
		
		Suppose $m\neq n$. This contradicts the fact that the largest term of the form ${\sf S}\widetilde{k}$ in $\nu = \gamma[\mathsf{c}:= {\sf S}\widetilde{n}]$ is ${\sf S}\widetilde n$ and that the largest term of the form  ${\sf S}\widetilde{k}$ in $\nu = \mu[\mathsf{c}:= {\sf S}\widetilde{m}]$ is ${\sf S}\widetilde m$.
	\end{proof}
	
	\begin{lemma}\label{kooksmurf}
		Consider any $\mathcal A$-string $\theta \neq \varepsilon$.
		Let $\alpha_m = \theta;\theta$. We have $\beta_m =\theta$. 
		Suppose $\alpha_n = \gamma;\delta$ is acceptable and $\mathsf{c}$ occurs in $\gamma$ and $\beta_n = \theta$. Then $n<m$.
	\end{lemma}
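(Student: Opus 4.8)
The clause ``$\beta_m = \theta$'' is just an unpacking of the construction: $\theta;\theta$ is acceptable, with $\gamma = \delta = \theta$ (recall $\theta$ is $;$-free and is a sub-string of itself), we have $\delta;\delta = \theta;\theta = \alpha_m$, and $\theta$ contains no $\mathsf c$, so $\beta_m = \theta[\mathsf c := \mathsf S\widetilde m] = \theta$. The substance of the lemma is the inequality $n < m$, and the plan is to derive it purely from a length comparison, invoking that $\mathsf{gn}_4$ is the length-first ordering (Lemma~\ref{lem:lengthfirstestimates} and the reminders preceding it in this section): it suffices to show $|\gamma;\delta| < |\theta;\theta|$.

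First I would record what $\theta$ is. Put $j := \mathsf{gn}_4(\delta;\delta)$; by construction of the $\beta$-list, $\theta = \beta_n = \gamma[\mathsf c := \mathsf S\widetilde j]$. Then I bound $|\theta;\theta|$ from below: since $|\widetilde j| = 7j+1$ (Lemma~\ref{lolligesmurf}), we have $|\mathsf S\widetilde j| = 7j + 2$, and as $\mathsf c$ occurs in $\gamma$ at least once (by hypothesis), replacing each occurrence of the letter $\mathsf c$ by $\mathsf S\widetilde j$ lengthens $\gamma$ by at least $7j + 1$ letters. Hence $|\theta| \ge |\gamma| + 7j + 1$ and $|\theta;\theta| = 2|\theta| + 1 \ge 2|\gamma| + 14j + 3$.

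Next I bound $|\delta|$ from above, which is where the length-first estimate does the real work: $\delta;\delta$ is a non-empty string, so Lemma~\ref{lem:lengthfirstestimates} gives $2|\delta| + 1 = |\delta;\delta| \le \mathsf{gn}_4(\delta;\delta) = j$, whence $|\delta| < j$ (in particular $j \ge 1$). Combining,
\[
|\theta;\theta| - |\gamma;\delta| \;\ge\; \bigl(2|\gamma| + 14j + 3\bigr) - \bigl(|\gamma| + |\delta| + 1\bigr) \;=\; |\gamma| + 14j + 2 - |\delta| \;>\; |\gamma| + 13j + 2 \;>\; 0.
\]
Thus $|\gamma;\delta| < |\theta;\theta|$, and since a shorter string always receives a strictly smaller $\mathsf{gn}_4$-code, $n = \mathsf{gn}_4(\gamma;\delta) < \mathsf{gn}_4(\theta;\theta) = m$.

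I do not expect a genuine obstacle here: the argument is essentially a one-line length estimate. The only points to be careful about are (i) the direction of the inflation — passing from an acceptable $\gamma;\delta$ in which $\mathsf c$ occurs to the ``self-applied'' string $\theta;\theta$ replaces the single letter $\mathsf c$ by the long numeral $\mathsf S\widetilde j$, while $\delta;\delta$ is always short relative to its own code $j$ — and (ii) that the hypothesis $\mathsf c \in \gamma$ is precisely what makes the length gap strictly positive (and, incidentally, makes the stated hypothesis $\theta \neq \varepsilon$ automatic).
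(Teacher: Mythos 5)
Your proof is correct and follows essentially the same route as the paper's: both reduce the claim to the length inequality $|\gamma;\delta| < |\theta;\theta|$ and then invoke the length-first property of $\mathsf{gn}_4$. The only difference is that the paper gets the key length bounds by citing Lemma~\ref{smurferella} (giving the chain $|\theta;\theta| > |\theta| \geq |\mathsf{S}\widetilde{k}| > |\delta;\delta| \geq |\gamma;\delta|$), whereas you re-derive that lemma's content inline via the explicit estimates $|\mathsf{S}\widetilde{j}| = 7j+2$ and $|\delta;\delta| \leq \mathsf{gn}_4(\delta;\delta)$.
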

	
	\begin{proof}
		The first claim is trivial. Suppose the conditions of the second claim.  Let
		$\alpha_k = \delta;\delta$.
		We have, by Lemma~\ref{smurferella}, that
		$|\theta;\theta| > |\theta| \geq |{\sf S}\widetilde k| > |\delta;\delta| \geq |\gamma;\delta|$. 
	\end{proof}
	
	\noindent
	Consider any $\mathcal A$-string $\zeta$.
	Let us say that $\zeta$ is \emph{essential} if, for some $n$, we have
	$\zeta=\beta_n$ and $\alpha_n = \gamma;\delta$, where $\mathsf{c}$ occurs in $\gamma$. 
	Otherwise, we call $\zeta$ \emph{inessential}.
	
	The combination of Lemmas~\ref{gourmetsmurf} and \ref{kooksmurf} tells us
	that the enumeration of the $\beta_n$ looks as follows. Suppose $\zeta$ is non-empty.
	If $\zeta$ is inessential it will be enumerated first as $\beta_{{\sf gn}_4(\zeta;\zeta)}$. 
	All occurrences of $\zeta$ in the enumeration will have the form $\beta_{{\sf gn}_4(\zeta;\eta)}$, 
	where $\zeta$ is a sub-string of $\eta$.
	If $\zeta$ is essential, its first occurrence will be $\beta_{{\sf gn}_4(\gamma,\delta)}$,
	where $\gamma,\delta$ is adequate and $\mathsf{c}$ occurs in $\gamma$ and 
	$\zeta = \gamma[\mathsf{c}:= {\sf S}(\widetilde{{\sf gn}_4(\delta;\delta)})]$. 
	After that the enumeration mirrors the inessential case.
	
	\begin{remark}	We note that we could define a many-valued G\"odel numbering ${\sf gn}^\ast(\zeta) :=
		\verz{n\mid \zeta= \beta_n}$. There are no fundamental objections to many-valued G\"odel numberings as long as they are total,
		in the sense that to each string a non-empty set of values is assigned, and injective, in the sense that
		the sets of values assigned to two different strings are disjoint. However, since in our framework, we opted
		for the more conventional choice of functional G\"odel numberings, we will not consider a G\"odel numbering like
		${\sf gn}^\ast$.
	\end{remark}
	
	\noindent	We define, for an $\mathcal A$-string $\theta$: ${\sf gn}_5(\theta)$ is the  smallest $n$ such that $\beta_n= \theta$. 
	
	We could compute ${\sf gn}_5$ as follows.
	\newcommand\textvtt[1]{{\normalfont\fontfamily{cmvtt}\selectfont #1}}
	Let an $\mathcal A$-string $\zeta$ be given.
	\begin{quotation}
		\textvtt{Is $\zeta$ the empty string? If \emph{yes}, ${\sf gn}_5(\zeta) :=0$. If \emph{no}, 
			does $\zeta$ have a sub-string of the form ${\sf S}\widetilde{k}$?
			If \emph{no}, set ${\sf gn}_5(\zeta) := {\sf gn}_4(\zeta;\zeta)$. If \emph{yes}, find the largest such $k$, say it is $n$. 
			Let $\gamma$ be the result of replacing all occurrences of
			${\sf S}\widetilde n$ in $\zeta$ by $\mathsf{c}$. Is $\alpha_n$ of the form $\delta;\delta$, where
			$\gamma;\delta$ is adequate? If \emph{no}, set ${\sf gn}_5(\zeta) := {\sf gn}_4(\zeta;\zeta)$.
			If \emph{yes}, put ${\sf gn}_5(\zeta):= {\sf gn}_4(\gamma;\delta)$.} 
	\end{quotation}
	
	\noindent The G\"odel numbering ${\sf gn}_5$ is not monotonic with respect to the sub-string ordering
	and $\leq$. Suppose, e.g., $n = {\sf gn}_4(\delta;\delta)$.
	Clearly, $\widetilde n$ is inessential, since it does not contain any sub-string of the form ${\sf S}\widetilde k$.
	We remind the reader that $|\widetilde{n}|= 7n+1$. So, we have: 
	\begin{align*}
	{\sf gn}_5(\widetilde n) & = {\sf gn}_4(\widetilde n;\widetilde{n}) \geq \frac{19^{14n+3}-1}{18}>n, \\
	{\sf gn}_5({\sf S}\widetilde n) & = {\sf gn}_4(c;\delta)< {\sf gn}_4(\delta;\delta) = n.
	\end{align*}
	
	\noindent So, ${\sf gn}_5({\sf S}\widetilde n) < {\sf gn}_5(\widetilde{n})$.
	
	We will prove two lemmas about classes of cases where 
	${\sf gn}_5$ is monotonic. 
	
	\begin{lemma}\label{momoasmurf}
		Suppose $\zeta$ is inessential and $\theta$ is a sub-string of
		$\zeta$. Then, ${\sf gn}_5(\theta) \leq {\sf gn}_5(\zeta)$.
	\end{lemma}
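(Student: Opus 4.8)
The plan is to reduce the statement to the length-first monotonicity of ${\sf gn}_4$ (Lemma~\ref{lem:lengthfirstestimates}) by first computing ${\sf gn}_5$ explicitly on inessential strings.

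First I would establish the key identity: if $\zeta \neq \varepsilon$ is inessential, then ${\sf gn}_5(\zeta) = {\sf gn}_4(\zeta;\zeta)$. The argument unwinds the definition of $\beta_n$: whenever $\beta_n = \zeta$, the string $\alpha_n$ must be acceptable (since $\zeta\neq\varepsilon$ rules out the don't-care value), say $\alpha_n = \gamma;\delta$; inessentiality forbids $\mathsf c$ from occurring in $\gamma$, so no substitution takes place and $\gamma = \zeta$, while acceptability gives $\zeta \sqsubseteq \delta$. Conversely, every such $\delta$ yields $\beta_{{\sf gn}_4(\zeta;\delta)} = \zeta$. Since $\zeta \sqsubseteq \delta$ forces $|\zeta;\delta| \geq |\zeta;\zeta|$ with equality only for $\delta = \zeta$, Lemma~\ref{lem:lengthfirstestimates} shows the least index $n$ with $\beta_n = \zeta$ is exactly ${\sf gn}_4(\zeta;\zeta)$. (For $\zeta = \varepsilon$ one simply recalls ${\sf gn}_5(\varepsilon) = 0$ from the defining computation, which suffices since $\theta$ must then also be $\varepsilon$.)

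Next I would bound ${\sf gn}_5(\theta) \leq {\sf gn}_4(\theta;\theta)$ for every non-empty sub-string $\theta$ of $\zeta$. If $\theta$ is inessential, this is the identity just proved. If $\theta$ is essential, then by definition some $n$ with $\beta_n = \theta$ has $\mathsf c$ occurring in the $\gamma$-part of $\alpha_n$, and Lemma~\ref{kooksmurf} places that $n$, hence also ${\sf gn}_5(\theta)$, strictly below ${\sf gn}_4(\theta;\theta)$. (The case $\theta = \varepsilon$ is immediate since ${\sf gn}_5(\varepsilon) = 0$.)

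Finally I would compare ${\sf gn}_4(\theta;\theta)$ with ${\sf gn}_4(\zeta;\zeta)$: from $\theta \sqsubseteq \zeta$ we get $|\theta| \leq |\zeta|$, hence $|\theta;\theta| \leq |\zeta;\zeta|$, so Lemma~\ref{lem:lengthfirstestimates} yields ${\sf gn}_4(\theta;\theta) \leq {\sf gn}_4(\zeta;\zeta)$ (with equality forced only when $\theta = \zeta$). Chaining the inequalities gives ${\sf gn}_5(\theta) \leq {\sf gn}_4(\theta;\theta) \leq {\sf gn}_4(\zeta;\zeta) = {\sf gn}_5(\zeta)$, as desired. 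I expect the only genuinely delicate point to be the first step — pinning down the exact value of ${\sf gn}_5$ on inessential strings from the case analysis in the definition, using the combination of Lemmas~\ref{gourmetsmurf} and \ref{kooksmurf}; everything afterwards is routine length-first bookkeeping.
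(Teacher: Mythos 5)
Your proof is correct and follows exactly the same route as the paper, whose entire proof is the one-line chain ${\sf gn}_5(\theta) \leq {\sf gn}_4(\theta;\theta) \leq {\sf gn}_4(\zeta;\zeta)= {\sf gn}_5(\zeta)$; you have simply spelled out the justification of each link (the value of ${\sf gn}_5$ on inessential strings, the upper bound ${\sf gn}_5 \leq {\sf gn}_4(\cdot;\cdot)$, and length-first monotonicity). No gaps.
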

	
	\begin{proof}
		Suppose $\zeta$ is inessential. Then, 
		${\sf gn}_5(\theta) \leq {\sf gn}_4(\theta;\theta) \leq {\sf gn}_4(\zeta;\zeta)= {\sf gn}_5(\zeta)$.
	\end{proof}
	
	\noindent
	Let $\mathscr W$ be the set of
	well-formed expressions of the arithmetical language.
	We partition $\mathscr W$  in two classes $\mathscr W_0$ and $\mathscr W_1$.	Here:
	\begin{itemize}
		\item $\mathscr W_0 := \verz{{\sf SS}\zero} \cup \verz{\widetilde n \mid n \in \omega}
		\cup \verz{({\sf SS}\zero \times \widetilde n) \mid n \in \omega}$
		\item $\mathscr W_1 := \mathscr W\setminus \mathscr W_0$
	\end{itemize}  
	
	\begin{lemma}\label{beterwetersmurf}
		Suppose $\theta\in \mathscr W_1$ and $\zeta\in\mathscr W$.
		Suppose further that $\theta$ is a sub-expression of $\zeta$.
		Then, ${\sf gn}_5(\theta) \leq {\sf gn}_5(\zeta)$.
	\end{lemma}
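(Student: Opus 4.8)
The plan is to case-split on whether $\zeta$ is \emph{essential} or \emph{inessential}, in the sense defined just before Lemma~\ref{momoasmurf}. If $\zeta$ is inessential, then ${\sf gn}_5(\zeta) = {\sf gn}_4(\zeta;\zeta)$, and since every sub-expression is in particular a sub-string, Lemma~\ref{momoasmurf} already gives ${\sf gn}_5(\theta) \leq {\sf gn}_5(\zeta)$ --- note the hypothesis $\theta \in \mathscr W_1$ is not even used here. So assume $\zeta$ is essential. Recall that then ${\sf gn}_5(\zeta) = {\sf gn}_4(\gamma;\delta)$, where $n^{*}$ is the largest $k$ with ${\sf S}\widetilde k \sqsubseteq \zeta$, the string $\gamma$ is obtained from $\zeta$ by replacing every occurrence of ${\sf S}\widetilde{n^{*}}$ with ${\sf c}$, $\delta$ is the $;$-free string with $\delta;\delta = \alpha_{n^{*}}$ (so ${\sf gn}_4(\delta;\delta) = n^{*}$), $\gamma;\delta$ is acceptable, ${\sf c}$ occurs in $\gamma$, and $\zeta = \gamma[{\sf c} := {\sf S}\widetilde{n^{*}}]$. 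Throughout I would use the free estimate ${\sf gn}_5(\eta) \leq {\sf gn}_4(\eta;\eta)$, valid for every $\mathcal A$-string $\eta$ because $\beta_{{\sf gn}_4(\eta;\eta)} = \eta$, together with the consequences of Lemma~\ref{lem:lengthfirstestimates}: $\alpha \sqsubseteq \beta$ implies $|\alpha| \leq |\beta|$, hence ${\sf gn}_4(\alpha) \leq {\sf gn}_4(\beta)$, with equality forcing $\alpha = \beta$ (a sub-string of the same length is the whole string).

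Next I would establish a structural dichotomy for sub-expressions of $\zeta$. Since ${\sf S}\widetilde{n^{*}}$ is a well-formed term, every string-occurrence of ${\sf S}\widetilde{n^{*}}$ inside the well-formed $\zeta$ is in fact a sub-expression occurrence, and by Lemma~\ref{grumpysmurf} (applied to ${\sf S}\widetilde{n^{*}}$ against itself) distinct such occurrences are disjoint; consequently $\gamma$ is a well-formed $\mathcal L({\sf c})$-expression and the substitution $\gamma[{\sf c}:={\sf S}\widetilde{n^{*}}]$ recovers $\zeta$ unambiguously. Every sub-expression $\theta$ of $\zeta$ then falls into exactly one of two cases: (a) $\theta \preceq {\sf S}\widetilde{n^{*}}$, i.e.\ $\theta$ lies inside one of the inserted copies; or (b) $\theta = \theta'[{\sf c} := {\sf S}\widetilde{n^{*}}]$ for a unique $\mathcal L({\sf c})$-sub-expression $\theta'$ of $\gamma$, where moreover ${\sf c}$ occurs in $\theta'$ iff ${\sf S}\widetilde{n^{*}} \sqsubseteq \theta$.

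Then I would settle the three resulting sub-cases. In case (a): the partition is designed so that every sub-expression of ${\sf S}\widetilde{n^{*}}$ other than ${\sf S}\widetilde{n^{*}}$ itself lies in $\mathscr W_0$, so $\theta \in \mathscr W_1$ forces $\theta = {\sf S}\widetilde{n^{*}}$; since ${\sf c} \sqsubseteq \gamma \sqsubseteq \delta$, the pair ${\sf c};\delta$ is acceptable and $\beta_{{\sf gn}_4({\sf c};\delta)} = {\sf c}[{\sf c}:={\sf S}\widetilde{n^{*}}] = \theta$, whence ${\sf gn}_5(\theta) \leq {\sf gn}_4({\sf c};\delta) \leq {\sf gn}_4(\gamma;\delta) = {\sf gn}_5(\zeta)$, using $|{\sf c}| = 1 \leq |\gamma|$. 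In case (b) with ${\sf c}$ occurring in $\theta'$: then ${\sf S}\widetilde{n^{*}} \sqsubseteq \theta \sqsubseteq \zeta$, so $n^{*}$ is still the largest index of an ${\sf S}\widetilde k$ inside $\theta$ and replacing ${\sf S}\widetilde{n^{*}}$ by ${\sf c}$ in $\theta$ returns $\theta'$; since $\theta'$ is $;$-free and $\theta' \sqsubseteq \gamma \sqsubseteq \delta$, the pair $\theta';\delta$ is acceptable and $\beta_{{\sf gn}_4(\theta';\delta)} = \theta'[{\sf c}:={\sf S}\widetilde{n^{*}}] = \theta$, whence ${\sf gn}_5(\theta) \leq {\sf gn}_4(\theta';\delta) \leq {\sf gn}_4(\gamma;\delta) = {\sf gn}_5(\zeta)$, using $\theta' \sqsubseteq \gamma$. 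In case (b) with ${\sf c}$ not occurring in $\theta'$: then $\theta = \theta' \sqsubseteq \gamma \sqsubseteq \delta$, so $|\theta;\theta| = 2|\theta|+1 \leq |\gamma| + |\delta| + 1 = |\gamma;\delta|$, and thus ${\sf gn}_5(\theta) \leq {\sf gn}_4(\theta;\theta) \leq {\sf gn}_4(\gamma;\delta) = {\sf gn}_5(\zeta)$.

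The main obstacle I anticipate is the bookkeeping supporting the dichotomy in the second paragraph: showing rigorously that every string-occurrence of ${\sf S}\widetilde{n^{*}}$ in the well-formed $\zeta$ is a genuine sub-expression occurrence, so that $\gamma$ is well-formed and the split ``inside an inserted copy'' versus ``image of a sub-expression of $\gamma$'' is both exhaustive and unique. In infix notation this is a balance-of-brackets / unique-readability verification of the same flavour as the proof of Lemma~\ref{grumpysmurf}, and it has to be done with some care. The one genuinely conceptual point --- and the reason $\mathscr W_0$ is defined the way it is --- is that in case (a) the hypothesis $\theta \in \mathscr W_1$ must collapse $\theta$ to ${\sf S}\widetilde{n^{*}}$; everything after that is routine length-comparison under the length-first ordering, the only delicate spot being the equal-length boundary case, handled by the observation that a sub-string of equal length is the whole string.
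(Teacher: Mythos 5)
Your argument is, in essence, the paper's own proof written out in full detail. The paper sets $n:={\sf gn}_4(\delta;\delta)$, lets $\theta_0$ be the result of replacing all occurrences of ${\sf S}\widetilde n$ in $\theta$ by $\mathsf{c}$, argues via the parse tree of $\zeta$ that, since $\theta\in\mathscr W_1$, $\theta$ cannot occur strictly below a node labelled ${\sf S}\widetilde n$ (so that $\theta_0$ is a sub-expression of $\gamma$), and concludes ${\sf gn}_5(\theta)\leq{\sf gn}_4(\theta_0;\delta)\leq{\sf gn}_4(\gamma;\delta)={\sf gn}_5(\zeta)$. Your cases (a) and (b) are exactly the two positions a sub-expression occurrence can take relative to the relabelled nodes, and your inessential case is the instance $\gamma=\delta=\zeta$ of the same computation, where the substitution is vacuous by Lemma~\ref{smurferella}.

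There is, however, one step that fails, and it fails in the paper's proof for the same reason, so you have inherited it rather than introduced it: the claim that every \emph{proper} sub-expression of ${\sf S}\widetilde{n}$ lies in $\mathscr W_0$. The term ${\sf S}\zero$ is a sub-expression of ${\sf SS}\zero$, hence of ${\sf S}\widetilde n$ for every $n\geq 1$, yet it belongs to none of the classes $\verz{{\sf SS}\zero}$, $\verz{\widetilde m\mid m\in\omega}$, $\verz{({\sf SS}\zero\times\widetilde m)\mid m\in\omega}$, so ${\sf S}\zero\in\mathscr W_1$ and it \emph{can} occur strictly below a node labelled ${\sf S}\widetilde n$. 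This is not merely a presentational gap. Take $k:={\sf gn}_4(\mathsf{c};\mathsf{c})$ and $\zeta:={\sf S}\widetilde k$; then $\zeta$ is essential via the acceptable pair $\mathsf{c};\mathsf{c}$, so ${\sf gn}_5(\zeta)\leq{\sf gn}_4(\mathsf{c};\mathsf{c})<\frac{19^4-1}{18}$. On the other hand $\theta:={\sf S}\zero$ is inessential (an essential representation would require ${\sf S}\widetilde j\sqsubseteq{\sf S}\zero$, hence $j=0={\sf gn}_4(\delta';\delta')$, which is impossible since $\alpha_0=\varepsilon$), so ${\sf gn}_5(\theta)={\sf gn}_4({\sf S}\zero;{\sf S}\zero)\geq\frac{19^5-1}{18}$. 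Thus ${\sf gn}_5(\theta)>{\sf gn}_5(\zeta)$ even though $\theta\in\mathscr W_1$ and $\theta\preceq\zeta$, refuting the lemma as stated. The cure is to place ${\sf S}\zero$ into $\mathscr W_0$ (adjusting the explicit values of ${\sf gn}_6$ on $\mathscr W_0$ accordingly); with that emendation your case (a) and the paper's parse-tree step both become correct, and the rest of your argument goes through unchanged.
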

	
	\begin{proof}
		Suppose 
		${\sf gn}_5(\zeta)= {\sf gn}_4(\gamma;\delta)$.
		Let $n := {\sf gn}_4(\delta;\delta)$ and let
		$\theta_0$ be the result of replacing all occurrences of 
		${\sf S} \widetilde n$ in $\theta$ by $\mathsf{c}$. (We include the case here where there are no such occurrences.) 
		
		We claim that $\theta_0$ is a sub-expression of $\gamma$. To see this consider the parse-tree of $\zeta$. 
		Since $\theta$ is
		in $\mathscr W_1$, it cannot occur strictly below a node
		labeled ${\sf S} \widetilde n$. So, relabeling the nodes labeled ${\sf S} \widetilde n$ 
		with $\mathsf{c}$ and removing all nodes below them in the parse-tree of $\zeta$ will result in occurrences of $\theta_0$
		at the places where we originally had occurrences of $\theta$.
		
		It follows that 
		${\sf gn}_5(\theta) \leq {\sf gn}_4(\theta_0;\delta) \leq 
		{\sf gn}_4(\gamma;\delta)= {\sf gn}_5(\zeta)$.
	\end{proof}
	
	\subsection{On ${\sf gn}_6$}
	Finally, we are ready and set to define the desired G\"odel numbering~${\sf gn}_6$.
	\begin{itemize}
		\item
		${\sf gn}_6({\sf SS}\zero) = 2$
		\item
		${\sf gn}_6(\widetilde{n}) := 4n+1$
		\item
		${\sf gn}_6(({\sf SS}\zero \times \widetilde n)):= 4n+3$
		\item
		${\sf gn}_6(\alpha) := 2^{\,{\sf gn}_5(\alpha)}$, 
		if $\alpha$ is in $\mathscr W_1$.
	\end{itemize}
	
	\noindent
	We have:
	\begin{lemma}
		${\sf gn}_6$ is injective on $\mathscr W$.
	\end{lemma}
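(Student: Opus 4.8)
The plan is to split $\mathscr W$ along the case distinction in the definition of ${\sf gn}_6$: the set $\mathscr W_0 = \{{\sf SS}\zero\}\cup\{\widetilde n\mid n\in\omega\}\cup\{({\sf SS}\zero\times\widetilde n)\mid n\in\omega\}$, on which ${\sf gn}_6$ is given by the three explicit arithmetic clauses, and its complement $\mathscr W_1$, on which ${\sf gn}_6(\alpha)=2^{{\sf gn}_5(\alpha)}$. I would first record that ${\sf gn}_6$ is well-defined: the expressions ${\sf SS}\zero$, the $\widetilde n$, and the $({\sf SS}\zero\times\widetilde n)$ are pairwise distinct as strings (they differ already in their first symbol, or else in length), so the first three clauses are mutually consistent, and $\mathscr W_1$ is disjoint from $\mathscr W_0$ by definition. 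Injectivity on $\mathscr W_0$ is then immediate from a residue computation: ${\sf SS}\zero\mapsto 2$, $\widetilde n\mapsto 4n+1\equiv 1\pmod 4$, and $({\sf SS}\zero\times\widetilde n)\mapsto 4n+3\equiv 3\pmod 4$, so the three kinds of expressions land in disjoint residue classes mod $4$ while the last two maps are visibly injective in $n$. I would also note here the shape of the image, ${\sf gn}_6(\mathscr W_0)=\{2\}\cup\{2n+1\mid n\in\omega\}$, since $\{4n+1\mid n\}\cup\{4n+3\mid n\}$ exhausts the positive odd numbers; in particular $2$ is the only even element of this set.

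Injectivity on $\mathscr W_1$ reduces to injectivity of ${\sf gn}_5$ there, and ${\sf gn}_5$ is in fact injective on all of $\mathcal A^\ast$: if ${\sf gn}_5(\theta)={\sf gn}_5(\theta')=n$ then $\theta=\beta_n=\theta'$, since ${\sf gn}_5(\cdot)$ is by definition the least index $n$ with $\beta_n=\cdot$. Composing with the injection $x\mapsto 2^x$ then gives injectivity of ${\sf gn}_6$ on $\mathscr W_1$.

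The one step that needs genuine care — and the main obstacle — is excluding collisions \emph{across} $\mathscr W_0$ and $\mathscr W_1$, i.e.\ showing $2^{{\sf gn}_5(\alpha)}\notin\{2\}\cup\{2n+1\mid n\in\omega\}$ for every $\alpha\in\mathscr W_1$. A power of $2$ is odd only when the exponent is $0$ and equals $2$ only when the exponent is $1$, so it suffices to prove ${\sf gn}_5(\alpha)\ge 2$ for all $\alpha\in\mathscr W_1$. For this I would invoke the description of the enumeration $(\beta_n)_{n\in\omega}$ obtained by combining Lemmas~\ref{gourmetsmurf} and~\ref{kooksmurf}: for an inessential $\alpha$ one has ${\sf gn}_5(\alpha)={\sf gn}_4(\alpha;\alpha)$, and for an essential $\alpha$ one has ${\sf gn}_5(\alpha)={\sf gn}_4(\gamma;\delta)$ for the unique acceptable $\gamma;\delta$ with $\mathsf c$ occurring in $\gamma$ and $\alpha=\gamma[\mathsf c:={\sf S}\widetilde{{\sf gn}_4(\delta;\delta)}]$. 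In the inessential case $|\alpha;\alpha|=2|\alpha|+1\ge 3$, since $\alpha\in\mathscr W$ is a non-empty expression; in the essential case $|\gamma;\delta|\ge 3$, since $\gamma$ contains $\mathsf c$ and $\delta$ has $\gamma$ as a sub-string, whence $|\gamma|,|\delta|\ge 1$. In either case the bound $|\sigma|\le\mathfrak g(\sigma)$ of Lemma~\ref{lem:lengthfirstestimates}, applied to ${\sf gn}_4$ (the length-first numbering of $\mathcal A^+$-strings), yields ${\sf gn}_5(\alpha)\ge 3$, so ${\sf gn}_6(\alpha)=2^{{\sf gn}_5(\alpha)}\ge 8$ is even and different from $2$ and hence lies outside ${\sf gn}_6(\mathscr W_0)$. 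Putting the three cases together gives injectivity of ${\sf gn}_6$ on $\mathscr W$; I expect no difficulty beyond this bookkeeping, the essential point being that one must not skip the appeal to the enumeration analysis that keeps ${\sf gn}_5(\alpha)$ away from $0$ and $1$, without which the values $2^0=1$ and $2^1=2$ could a priori clash with ${\sf gn}_6(\zero)$ and ${\sf gn}_6({\sf SS}\zero)$.
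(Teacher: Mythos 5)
Your proposal is correct and follows essentially the same route as the paper: the paper's proof also reduces everything to excluding the exponents $0$ and $1$ for ${\sf gn}_5(\alpha)$ and rules them out via $|\gamma;\delta|\geq 3$ together with the length-first lower bound (the paper uses the sharper estimate ${\sf gn}_4(\gamma;\delta)\geq\frac{19^3-1}{18}$, you use the weaker but sufficient $|\gamma;\delta|\leq{\sf gn}_4(\gamma;\delta)$). You merely spell out the bookkeeping on $\mathscr W_0$ and $\mathscr W_1$ that the paper leaves implicit.
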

	
	\begin{proof}
		Suppose $\alpha$ is well-formed. 
		The only case where we could go wrong is where ${\sf gn}_5(\alpha)$ is 0 or 1.
		Since $\alpha$ is well-formed, it is not the empty string.
		So, for some adequate $\gamma;\delta$, where $\gamma$ is well-formed,
		we have ${\sf gn}_5(\alpha) ={\sf gn}_4(\gamma;\delta)$. We note that $|\gamma;\delta|\geq 3$,
		so ${\sf gn}_5(\alpha) ={\sf gn}_4(\gamma;\delta) \geq \frac{19^3-1}{18} > 1$.
	\end{proof}
	
	\noindent
	So ${\sf gn}_6$ is indeed a G\"odel numbering.
	
	\begin{lemma}\label{knutselsmurf}
		Suppose $\alpha$ is in $\mathscr W_0$. Then, ${\sf gn}_6(\alpha) \leq |\alpha|
		\leq {\sf gn}_4(\alpha) \leq {\sf gn}_5(\alpha) $.
	\end{lemma}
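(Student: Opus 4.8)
The plan is to verify the chain ${\sf gn}_6(\alpha)\le|\alpha|\le{\sf gn}_4(\alpha)\le{\sf gn}_5(\alpha)$ link by link, the first two links being routine and the third requiring a small combinatorial observation.

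For the first link, ${\sf gn}_6(\alpha)\le|\alpha|$, I would run through the three shapes of an element of $\mathscr W_0$. If $\alpha={\sf SS}\zero$, then ${\sf gn}_6(\alpha)=2\le 3=|{\sf SS}\zero|$. If $\alpha=\widetilde n$, then $|\widetilde n|=7n+1$ by Lemma~\ref{lolligesmurf}, so ${\sf gn}_6(\widetilde n)=4n+1\le 7n+1$. If $\alpha=({\sf SS}\zero\times\widetilde n)$, then a direct count gives $|\alpha|=7n+7$ (this string is $\widetilde{n+1}$ with its leading $\mathsf S$ deleted), so ${\sf gn}_6(\alpha)=4n+3\le 7n+7$. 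For the second link, $|\alpha|\le{\sf gn}_4(\alpha)$, I would simply invoke Lemma~\ref{lem:lengthfirstestimates} in the form recorded for ${\sf gn}_4$ at the start of Subsection~\ref{subsection:length-first}, noting that $\alpha$ is in particular a string over $\mathcal A^+$.

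For the third link, ${\sf gn}_4(\alpha)\le{\sf gn}_5(\alpha)$, the plan is to show that every $\alpha\in\mathscr W_0$ is \emph{inessential}. Granting this, the identity ${\sf gn}_5(\alpha)={\sf gn}_4(\alpha;\alpha)$ holds for the (non-empty, well-formed) string $\alpha$ --- this is exactly the equality exploited in the proof of Lemma~\ref{momoasmurf} --- and then Lemma~\ref{lem:lengthfirstestimates} yields ${\sf gn}_4(\alpha)<{\sf gn}_4(\alpha;\alpha)={\sf gn}_5(\alpha)$, since $|\alpha|<|\alpha;\alpha|$. To prove inessentiality, I would first record the combinatorial fact that no string in $\mathscr W_0$ contains a substring of the form ${\sf S}\widetilde k$ with $k\ge 1$: for $k\ge 1$ the numeral $\widetilde k$ begins with the block $\mathsf S($, so ${\sf S}\widetilde k$ begins with the block $\mathsf{SS}($, whereas in each of the three families making up $\mathscr W_0$ --- namely ${\sf SS}\zero$, $\widetilde n$ (which is $n$ copies of the block $\mathsf S(\mathsf{SS}\zero\times$ followed by $\zero$ followed by $n$ closing parentheses), and $({\sf SS}\zero\times\widetilde n)$ --- every occurrence of the block $\mathsf{SS}$ is immediately followed by $\zero$, never by a left parenthesis. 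Now if $\alpha\in\mathscr W_0$ were essential, then by definition $\alpha=\beta_n$ with $\alpha_n=\gamma;\delta$ acceptable and ${\sf c}$ occurring in $\gamma$, so that $\alpha=\gamma[{\sf c}:={\sf S}\widetilde k]$ with $k={\sf gn}_4(\delta;\delta)$; since $\delta;\delta$ contains the separator it is a non-empty string, hence $k\ge 1$ (the empty string being the unique string of ${\sf gn}_4$-value $0$), and since ${\sf c}$ occurs in $\gamma$ the substring ${\sf S}\widetilde k$ with this $k\ge 1$ occurs in $\alpha$ --- contradicting the recorded fact.

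The only non-routine ingredient is the combinatorial fact that the block $\mathsf{SS}($ never appears inside an element of $\mathscr W_0$; this I expect to be the main point to get right, together with the small degenerate cases ($n=0$, and the observation that $\alpha_0=\varepsilon$ is not of the form $\delta;\delta$, which is what keeps the ${\sf gn}_5$-algorithm from ever reaching its ``yes''-branch on inputs from $\mathscr W_0$).
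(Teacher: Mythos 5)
Your proof is correct and follows the same route as the paper's: a case analysis on the three shapes in $\mathscr W_0$ for ${\sf gn}_6(\alpha)\leq|\alpha|$, Lemma~\ref{lem:lengthfirstestimates} for $|\alpha|\leq{\sf gn}_4(\alpha)$, and the identity ${\sf gn}_5(\alpha)={\sf gn}_4(\alpha;\alpha)$ for the last link. The paper simply asserts that no ${\sf S}\widetilde k$ is a sub-string of $\alpha$ (which taken literally fails for $k=0$, e.g.\ ${\sf S}\zero\sqsubseteq{\sf SS}\zero$), so your restriction to $k\geq 1$ via the ${\sf SS}($-block observation, together with the separate handling of the $k=0$ case, is a more careful rendering of exactly that step; your length count $7n+7$ for $({\sf SS}\zero\times\widetilde n)$ also corrects the paper's $7n+6$ without affecting the inequality.
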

	
	\begin{proof}
		Suppose $\alpha$ is in $\mathscr W_0$.
		We note that, for no $k$, we have 
		${\sf S}\widetilde k$ is a sub-string of $\alpha$. So, ${\sf gn}_5(\alpha)
		= {\sf gn}_4(\alpha;\alpha)$.
		It follows that 
		$|\alpha| \leq {\sf gn}_4(\alpha) \leq {\sf gn}_4(\alpha;\alpha)= {\sf gn}_5(\alpha)$. Moreover, we have:\qedright
		\begin{align*}
		{\sf gn}_6({\sf SS}\zero) & = 2 < |{\sf SS}\zero| \\
		{\sf gn}_6(\widetilde{n})& = 4n+1 \leq 7n+1 = |\widetilde{n}|\\
		{\sf gn}_6(({\sf SS}\zero \times \widetilde{n})) & = 4n+3 < 7n+6
		= |({\sf SS}\zero \times \widetilde{n})| 
		\end{align*}
	\end{proof}
	
	\begin{lemma}\label{doktersmurf}
		Suppose ${\sf S}\widetilde n$ is essential. Then,
		${\sf gn}_6(\widetilde n)<{\sf gn}_6({\sf S}\widetilde{n})$
	\end{lemma}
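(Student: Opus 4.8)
The plan is to reduce the asserted inequality to a crude size comparison. First I would note that ${\sf S}\widetilde n$ is not of the form ${\sf SS}\zero$, $\widetilde m$, or $({\sf SS}\zero\times\widetilde m)$, so ${\sf S}\widetilde n\in\mathscr W_1$ and hence ${\sf gn}_6({\sf S}\widetilde n)=2^{{\sf gn}_5({\sf S}\widetilde n)}$, whereas ${\sf gn}_6(\widetilde n)=4n+1$ straight from the definition. Thus it suffices to prove $4n+1<2^{{\sf gn}_5({\sf S}\widetilde n)}$, i.e.\ that ${\sf gn}_5({\sf S}\widetilde n)$ is not too small relative to $\log_2 n$. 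Since ${\sf S}\widetilde n$ is essential, the discussion following Lemma~\ref{kooksmurf} gives ${\sf gn}_5({\sf S}\widetilde n)={\sf gn}_4(\gamma;\delta)$ for some acceptable $\gamma;\delta$ in which ${\sf c}$ occurs in $\gamma$, where ${\sf S}\widetilde n=\gamma[{\sf c}:={\sf S}\widetilde k]$ and $k={\sf gn}_4(\delta;\delta)$.

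The combinatorial heart of the argument is to show $\gamma={\sf c}$ and $k=n$. Because $\delta;\delta$ contains the separator symbol, $k={\sf gn}_4(\delta;\delta)\geq 1$, so ${\sf S}\widetilde k$ begins with the block ${\sf SS}($. Arguing as in the proof of Lemma~\ref{grumpysmurf}, the string ${\sf S}\widetilde n={\sf S}\mathfrak a^n\zero)^n$ contains ${\sf SS}($ as a substring only at its front; hence every occurrence of ${\sf S}\widetilde k$ in ${\sf S}\widetilde n$ produced by the substitution is a prefix of ${\sf S}\widetilde n$, and a balance consideration (again as in the proof of Lemma~\ref{grumpysmurf}) forces this prefix to exhaust ${\sf S}\widetilde n$, so $k=n$; in particular $n\geq 1$, since an ${\sf S}\widetilde k$ with $k\geq 1$ is too long to fit inside ${\sf S}\widetilde 0={\sf S}\zero$. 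As ${\sf S}\widetilde k$ already fills the whole string, $\gamma$ can only be the single letter ${\sf c}$. Therefore ${\sf gn}_5({\sf S}\widetilde n)={\sf gn}_4({\sf c};\delta)$; acceptability of ${\sf c};\delta$ means ${\sf c}$ occurs in $\delta$, so $d:=|\delta|\geq 1$, and ${\sf gn}_4(\delta;\delta)=n$.

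The rest is routine estimation with Lemma~\ref{lem:lengthfirstestimates} for the $19$-letter alphabet of $\mathcal A^+$. From ${\sf gn}_4(\delta;\delta)=n$ with $|\delta;\delta|=2d+1$ one gets $n<(19^{2d+2}-1)/18$, hence $4n+1<19^{2d+2}$ and $\log_2(4n+1)<(2d+2)\log_2 19<9d+9$. On the other side $|{\sf c};\delta|=d+2$, so ${\sf gn}_5({\sf S}\widetilde n)={\sf gn}_4({\sf c};\delta)\geq (19^{d+2}-1)/18$, and a one-line induction on $d$ gives $(19^{d+2}-1)/18>9d+9$ for all $d\geq 1$. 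Chaining these, $\log_2(4n+1)<{\sf gn}_5({\sf S}\widetilde n)$, i.e.\ ${\sf gn}_6(\widetilde n)=4n+1<2^{{\sf gn}_5({\sf S}\widetilde n)}={\sf gn}_6({\sf S}\widetilde n)$, as required.

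The only genuinely delicate step is the identification $\gamma={\sf c}$, $k=n$: the argument must rule out that $\gamma[{\sf c}:={\sf S}\widetilde k]={\sf S}\widetilde n$ could be realised with several occurrences of ${\sf c}$, or with ${\sf S}\widetilde k$ sitting inside ${\sf S}\widetilde n$ as a proper substring — this is precisely where uniqueness of the leading occurrence of ${\sf SS}($ and the bracket count are needed, and it is also where the edge cases $k=0$ and $n=0$ get excluded. Everything else (the membership ${\sf S}\widetilde n\in\mathscr W_1$, the passage from essentiality to the canonical form of ${\sf gn}_5({\sf S}\widetilde n)$, and the final numerical estimate) is bookkeeping once that step is secured.
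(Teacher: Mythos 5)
Your proof is correct and takes essentially the same route as the paper's: both reduce the claim to the identification ${\sf gn}_5({\sf S}\widetilde n)={\sf gn}_4({\sf c};\delta)$ with $n={\sf gn}_4(\delta;\delta)$ and then compare $4n+1$ against $2^{{\sf gn}_4({\sf c};\delta)}$ using the length-first estimates over the $19$-letter alphabet. The only differences are presentational: you spell out the combinatorial step forcing $\gamma={\sf c}$ and $k=n$ (which the paper dispatches in one sentence by noting that ${\sf S}\widetilde n$ has no proper sub-term of the form ${\sf S}\widetilde m$), and you run the final size comparison through logarithms rather than chaining the inequalities directly.
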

	
	\begin{proof}
		Suppose ${\sf S}\widetilde n$ is essential. Since ${\sf S}\widetilde n$
		cannot have a proper sub-term of the form ${\sf S}\widetilde m$, we find
		that ${\sf gn}_5({\sf S}\widetilde n) = {\sf gn}_4(c;\delta)$ for some $\delta$ that
		contains $\mathsf{c}$. Moreover, $n = {\sf gn}_4(\delta;\delta)$. 
		We have:\qedright
		\begin{eqnarray*}
			{\sf gn}_6(\widetilde n) & = & 4n+1 \\
			& < & 4\frac{19^{2|\delta|+2}-1}{18} + 1 \\
			& < & 2^{\frac{19^{|\delta|+2}-1}{18}} \\
			& \leq  & 2^n 
		\end{eqnarray*}
	\end{proof}
	
	\begin{lemma}
		${\sf gn}_6$ restricted to $\mathscr W$ is monotonic w.r.t.\ the sub-expression ordering and $<$.
	\end{lemma}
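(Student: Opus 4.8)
The plan is to prove the claim by a case distinction on which blocks of the partition $\mathscr W=\mathscr W_0\cup\mathscr W_1$ contain a proper sub-expression $\theta\prec\zeta$ and the larger expression $\zeta$. Throughout we use that on $\mathscr W_1$ one has ${\sf gn}_6=2^{{\sf gn}_5}$, that $x<2^{x}$ for every $x$, that ${\sf gn}_6$ is injective on $\mathscr W$ (the preceding lemma), and that a sub-expression of a well-formed expression occurs as a contiguous sub-string, so $\theta\prec\zeta$ gives $\theta\sqsubseteq\zeta$ with $|\theta|<|\zeta|$.

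When $\theta,\zeta\in\mathscr W_1$, Lemma~\ref{beterwetersmurf} gives ${\sf gn}_5(\theta)\le{\sf gn}_5(\zeta)$; since $\theta\ne\zeta$ and ${\sf gn}_6$ is injective, the two powers $2^{{\sf gn}_5(\theta)}$ and $2^{{\sf gn}_5(\zeta)}$ are distinct, so ${\sf gn}_6(\theta)<{\sf gn}_6(\zeta)$. When $\theta,\zeta\in\mathscr W_0$, $\zeta$ is ${\sf SS}\zero$, some $\widetilde m$, or some $({\sf SS}\zero\times\widetilde m)$; one reads off exactly which members of $\mathscr W_0$ are its sub-expressions (the $\widetilde i$ and $({\sf SS}\zero\times\widetilde i)$ with $i$ below the relevant bound, together with ${\sf SS}\zero$) and substituting the defining values $1,2,4i+1,4i+3$ of ${\sf gn}_6$ settles $<$ by a finite check.

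The main work is the case $\theta\in\mathscr W_0$, $\zeta\in\mathscr W_1$, where Lemma~\ref{knutselsmurf} gives ${\sf gn}_6(\theta)\le{\sf gn}_4(\theta)\le{\sf gn}_5(\theta)$. If $\zeta$ is inessential, Lemma~\ref{momoasmurf} applied to the sub-string $\theta\sqsubseteq\zeta$ yields ${\sf gn}_5(\theta)\le{\sf gn}_5(\zeta)$, so ${\sf gn}_6(\theta)\le{\sf gn}_5(\zeta)<2^{{\sf gn}_5(\zeta)}={\sf gn}_6(\zeta)$. If $\zeta$ is essential, write $\zeta=\gamma[{\sf c}:={\sf S}\widetilde j]$ with $\gamma;\delta$ acceptable, ${\sf c}$ occurring in $\gamma$, and $j={\sf gn}_4(\delta;\delta)$; then by Lemma~\ref{lem:lengthfirstestimates} ${\sf gn}_5(\zeta)={\sf gn}_4(\gamma;\delta)\ge\frac{19^{|\delta|+2}-1}{18}$ while $j<\frac{19^{2|\delta|+2}-1}{18}$ (and $|\delta|\ge 1$, since $\gamma\sqsubseteq\delta$ contains ${\sf c}$). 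Now split on the position of $\theta$ in $\zeta$: either $\theta$ lies inside one of the substituted copies of ${\sf S}\widetilde j$ — then $\theta\in\{\widetilde 0,\dots,\widetilde j\}\cup\{({\sf SS}\zero\times\widetilde i):i<j\}\cup\{{\sf SS}\zero\}$, so ${\sf gn}_6(\theta)\le 4j+1$, and the doubly-exponential estimate $4j+1<2^{\frac{19^{|\delta|+2}-1}{18}}\le{\sf gn}_6(\zeta)$, which is exactly the computation carried out in the proof of Lemma~\ref{doktersmurf}, closes this branch — or $\theta$ survives the substitution inside $\gamma$, hence is a sub-string of $\delta$ (because $\gamma\sqsubseteq\delta$), so $|\theta|\le|\delta|$ and ${\sf gn}_6(\theta)\le{\sf gn}_4(\theta)<19^{|\delta|+1}<2^{\frac{19^{|\delta|+2}-1}{18}}\le{\sf gn}_6(\zeta)$.

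The remaining case $\theta\in\mathscr W_1$, $\zeta\in\mathscr W_0$ is handled by noting that only very small expressions can occur as proper sub-expressions of a member of $\mathscr W_0$, and disposing of them directly against the explicit values of ${\sf gn}_6$ on $\mathscr W_0$. The step I expect to be the real obstacle is the essential branch above: one must keep precise track of which sub-expressions of $\zeta=\gamma[{\sf c}:={\sf S}\widetilde j]$ become buried inside the substituted efficient numeral and which survive in $\gamma$, and exploit the fact that a member of $\mathscr W_0$ lying inside ${\sf S}\widetilde j$ has ${\sf gn}_6$-value only up to roughly $4j$ while $j$ itself is merely exponential in $|\delta|$ — it is only the second exponential supplied by ${\sf gn}_6=2^{{\sf gn}_5}$ on $\mathscr W_1$ that keeps ${\sf gn}_6(\zeta)$ ahead.
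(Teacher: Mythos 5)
Your overall architecture is the paper's: the same partition of cases by membership of $\theta$ and $\zeta$ in $\mathscr W_0$ versus $\mathscr W_1$, the same appeals to Lemmas~\ref{momoasmurf}, \ref{beterwetersmurf} and \ref{knutselsmurf}, and the same doubly-exponential estimate in the essential branch. The one place you genuinely deviate is that essential branch of the case $\theta\in\mathscr W_0$, $\zeta\in\mathscr W_1$: you redo the numerics directly by locating $\theta$ either inside a substituted copy of ${\sf S}\widetilde j$ or inside $\gamma$, whereas the paper routes the comparison through the cleaner chain ${\sf gn}_6(\theta)\leq{\sf gn}_6(\widetilde n)<{\sf gn}_6({\sf S}\widetilde n)\leq{\sf gn}_6(\zeta)$ via Lemma~\ref{doktersmurf}. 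Your version is sound, provided you also verify the dichotomy you assert, namely that an element of $\mathscr W_0$ cannot straddle a substituted occurrence of ${\sf S}\widetilde j$; this does hold, because no element of $\mathscr W_0$ has a sub-term of the form ${\sf S}\widetilde j$ with $j\geq 1$, but it is a step you flag rather than carry out.

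The genuine gap is your final case, $\theta\in\mathscr W_1$ and $\zeta\in\mathscr W_0$, which you claim can be ``disposed of directly against the explicit values of ${\sf gn}_6$ on $\mathscr W_0$.'' The only element of $\mathscr W_1$ that occurs as a proper sub-expression of an element of $\mathscr W_0$ is ${\sf S}\zero$ (for instance ${\sf S}\zero\prec{\sf SS}\zero$), and its value is \emph{not} one of the small explicit clauses: since ${\sf S}\zero\in\mathscr W_1$ we have ${\sf gn}_6({\sf S}\zero)=2^{{\sf gn}_5({\sf S}\zero)}$, and ${\sf S}\zero={\sf S}\widetilde 0$ is inessential (the candidate index is $k=0$, but $\alpha_0=\varepsilon$ is not of the form $\delta;\delta$), so ${\sf gn}_5({\sf S}\zero)={\sf gn}_4({\sf S}\zero;{\sf S}\zero)\geq\frac{19^5-1}{18}$. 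Carrying out your ``direct check'' therefore produces ${\sf gn}_6({\sf S}\zero)=2^{{\sf gn}_4({\sf S}\zero;{\sf S}\zero)}$ against ${\sf gn}_6({\sf SS}\zero)=2$, i.e.\ it refutes rather than verifies the required inequality. You are in good company: the paper's own proof absorbs this configuration into its Case~1 by citing Lemma~\ref{beterwetersmurf}, which only compares ${\sf gn}_5$-values and yields a ${\sf gn}_6$-comparison only when $\zeta$ is also in $\mathscr W_1$, so it does not close this case either. Repairing it requires either reading the sub-expression ordering on dyadic numerals so that ${\sf S}\zero$ does not count as a sub-expression of ${\sf SS}\zero$ and of $\widetilde n$, or giving ${\sf S}\zero$ its own small explicit clause alongside $\mathscr W_0$; as written, your disposal of this case is a step that fails.
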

	
	\begin{proof}
		Suppose $\theta$ is a  sub-expression of $\zeta$.
		
		\emph{Case 1:}
		Suppose $\theta\in \mathscr W_1$. Then, we are immediately done by Lemma~\ref{beterwetersmurf}.
		
		\emph{Case 2:}
		Suppose $\theta \in \mathscr W_0$.
		
		\emph{Case 2.1:}
		Suppose $\zeta\in \mathscr W_0$. Then, we are done, by inspection of the definition of ${\sf gn}_6$ on $\mathscr W_0$.
		
		\emph{Case 2.2:}
		Suppose $\zeta\in \mathscr W_1$. 
		
		\emph{Case 2.2.1:}
		Suppose $\zeta$ is inessential. Then, by Lemma~\ref{knutselsmurf},
		${\sf gn}_6(\theta) \leq {\sf gn}_5(\theta)$ and, by Lemma~\ref{momoasmurf},
		${\sf gn}_5(\theta) \leq {\sf gn}_5(\zeta)$. Hence,
		\[ {\sf gn}_6(\theta) \leq {\sf gn}_5(\theta) \leq {\sf gn}_5(\zeta) < 2^{{\sf gn}_5(\zeta)}= {\sf gn}_6(\zeta).\]
		
		\emph{Case 2.2.2:}
		Suppose $\zeta$ is essential. It follows that 
		${\sf gn}_5(\zeta)= {\sf gn}_4(\gamma;\delta)$, where
		$\mathsf{c}$ occurs in $\gamma$. Let $n := {\sf gn}_4(\delta;\delta)$.
		By Lemma~\ref{smurferella}, we see that $|\widetilde n| > |\gamma|$.
		It follows that any sub-term of $\gamma$ that is in $\mathscr W_0$ must
		be a sub-term of $\widetilde n$. Hence, any sub-term of $\zeta$ that is in
		$\mathscr W_0$ must be in $\widetilde n$. So, by Case 2.1, ${\sf gn}_6(\theta) \leq {\sf gn}_6(\widetilde n)$.
		Also, clearly, ${\sf S} \widetilde n$ is essential, so, by Lemma~\ref{doktersmurf},
		${\sf gn}_6(\widetilde n) < {\sf gn}_6({\sf S}\widetilde{n})$.
		Finally, by Case 1: ${\sf gn}_6({\sf S}\widetilde{n})\leq {\sf gn}_6(\zeta)$.
		So, ${\sf gn}_6(\theta) \leq {\sf gn}_6(\zeta)$.
	\end{proof}
	
	We end with the obvious insight that ${\sf gn}_6$ is self-referential.
	\begin{lemma}
		Consider any arithmetical formula $A(x)$ in which there is a free occurrence of $x$. Let $n = {\sf gn}_4(A(\mathsf{c});A(\mathsf{c}))$
		and let $k = 2^n$. Then,
		$k = {\sf gn}_6(A(\overline k))$.
	\end{lemma}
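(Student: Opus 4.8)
The plan is to compute ${\sf gn}_6(A(\overline k))$ directly and read off that it equals $2^n = k$. First I would observe that, since $x$ occurs free in $A(x)$, the string $A(\mathsf{c})$ contains $\mathsf{c}$, contains no $;$, and is trivially a sub-string of itself, so $A(\mathsf{c});A(\mathsf{c})$ is acceptable. By the definition of $n$ we have $\alpha_n = A(\mathsf{c});A(\mathsf{c})$, so the recipe for $\beta_n$ applies with first component $\gamma = A(\mathsf{c})$ and second component $\delta = A(\mathsf{c})$, and ${\sf gn}_4(\delta;\delta) = n$. Hence $\beta_n = A(\mathsf{c})[\mathsf{c} := {\sf S}\widetilde n] = A(\mathsf{c})[\mathsf{c} := \overline{2^n}] = A(\overline k)$, using $\overline{2^n} = {\sf S}\widetilde n$, that $k = 2^n$, and that $\overline k$ contains no $\mathsf{c}$ (so the substitution really yields $A(\overline k)$). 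In particular ${\sf gn}_5(A(\overline k)) \le n$.

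Next I would note that $A(\overline k)$ is a formula, hence lies in $\mathscr W_1$: it is none of ${\sf SS}\zero$, $\widetilde m$, $({\sf SS}\zero \times \widetilde m)$, all of which are terms. Therefore ${\sf gn}_6(A(\overline k)) = 2^{{\sf gn}_5(A(\overline k))}$, and it suffices to prove ${\sf gn}_5(A(\overline k)) = n$, i.e.\ that no $j < n$ has $\beta_j = A(\overline k)$. This is where the earlier technical lemmas do the work. Suppose $\beta_j = A(\overline k)$ with $\alpha_j = \mu;\eta$ acceptable, and split on whether $\mathsf{c}$ occurs in $\mu$. If $\mathsf{c} \notin \mu$, then $\beta_j = \mu = A(\overline k)$ with $A(\overline k) \sqsubseteq \eta$, so $|\alpha_j| \ge 2|A(\overline k)| + 1 > 2|A(\mathsf{c})| + 1 = |\alpha_n|$ (since $\overline k = {\sf S}\widetilde n$ has length $>1$, replacing at least one occurrence of $\mathsf{c}$ strictly lengthens the string); by the length-first estimates (Lemma~\ref{lem:lengthfirstestimates}) this forces $j > n$. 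If instead $\mathsf{c} \in \mu$, then $\beta_j = \mu[\mathsf{c} := {\sf S}\widetilde m]$ with $m = {\sf gn}_4(\eta;\eta)$, and since also $A(\overline k) = A(\mathsf{c})[\mathsf{c} := {\sf S}\widetilde n]$, Lemma~\ref{gourmetsmurf} applied to the acceptable strings $A(\mathsf{c});A(\mathsf{c})$ and $\mu;\eta$ gives $A(\mathsf{c});A(\mathsf{c}) = \mu;\eta = \alpha_j$, whence $j = n$ by injectivity of ${\sf gn}_4$. In both cases $j \ge n$, so ${\sf gn}_5(A(\overline k)) = n$ and thus ${\sf gn}_6(A(\overline k)) = 2^n = k$.

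There is no genuinely hard step here — as the text advertises, it is the ``obvious insight''. The one point that needs a little care is the minimality claim ${\sf gn}_5(A(\overline k)) = n$, i.e.\ confirming that the essential occurrence found in the first paragraph is indeed the first occurrence of $A(\overline k)$ in the enumeration $(\beta_i)_i$; this is precisely what Lemmas~\ref{gourmetsmurf}, \ref{kooksmurf} and \ref{smurferella} are set up to deliver, together with the crude length inequality $|A(\overline k)| > |A(\mathsf{c})|$. Everything else is substitution bookkeeping and inspection of the definition of ${\sf gn}_6$ on $\mathscr W_1$.
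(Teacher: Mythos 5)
Your proof is correct and follows the argument the paper intends: the paper states this lemma without proof as ``the obvious insight'', relying on the preceding description of the enumeration $(\beta_i)_i$, and your write-up makes that implicit argument explicit using exactly the tools the paper sets up for it (Lemma~\ref{gourmetsmurf} for uniqueness of the essential representation, the length-first estimates in place of an explicit appeal to Lemma~\ref{kooksmurf} for the inessential case, and the $\mathscr W_1$ clause of ${\sf gn}_6$). The only point you leave tacit --- that a non-acceptable $\alpha_j$ yields $\beta_j=\varepsilon\neq A(\overline k)$ --- is immediate.
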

	
	\noindent
	We may conclude:
	
	\begin{theorem}
		\label{theorem:gn2selfref}
		The function ${\sf gn}_6$ is a monotonic G\"odel numbering that is self-referential for dyadic numerals. 
	\end{theorem}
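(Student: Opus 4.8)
The plan is to check the three clauses of the statement separately: that ${\sf gn}_6$ is a G\"odel numbering of $\mathscr W$, that it is monotonic for the sub-expression ordering, and that it is self-referential for the dyadic numerals $\overline{\cdot}$. The first two are already essentially in hand. Injectivity of ${\sf gn}_6$ on $\mathscr W$ is the lemma proved immediately above, and effectiveness is clear from the case definition of ${\sf gn}_6$ (each case being decidable, with the relevant parameters recoverable) together with the explicit algorithm for ${\sf gn}_5$, in which every search --- the largest $k$ with ${\sf S}\widetilde k$ a sub-string of $\zeta$; whether ${\sf gn}_4^{-1}(n)$ has the shape $\delta;\delta$ with $\gamma;\delta$ acceptable --- is bounded and hence terminates. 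Monotonicity is exactly the last lemma established before the theorem. So the only real work is self-referentiality, which is the content of the lemma stated just above; I would prove that lemma as follows.

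Fix a formula $A(x)$. If $x$ has no free occurrence in $A$, then $A(\overline k) = A$ for every $k$ and $k := {\sf gn}_6(A)$ works; so assume $x$ occurs free in $A$. Set $n := {\sf gn}_4(A(\mathsf{c});A(\mathsf{c}))$ and $k := 2^n$. First I would record $\overline k = {\sf S}\widetilde n$: by Lemma~\ref{lolligesmurf} we have ${\sf ev}(\widetilde n) = 2^n - 1$, so ${\sf ev}({\sf S}\widetilde n) = 2^n = k$, and ${\sf S}\widetilde n = {\sf SS}({\sf SS}\zero \times \widetilde{(n-1)})$ is a legal term of the efficient-numeral grammar --- here $n \geq 1$ is automatic, since $|A(\mathsf{c});A(\mathsf{c})| \geq 3$ forces $n = {\sf gn}_4(A(\mathsf{c});A(\mathsf{c}))$ to be large. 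Hence $A(\overline k) = A({\sf S}\widetilde n)$, which is a formula and therefore lies in $\mathscr W_1$, so ${\sf gn}_6(A(\overline k)) = 2^{\,{\sf gn}_5(A({\sf S}\widetilde n))}$ by definition of ${\sf gn}_6$. It thus suffices to prove ${\sf gn}_5(A({\sf S}\widetilde n)) = n$.

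To compute ${\sf gn}_5$ on $\zeta := A({\sf S}\widetilde n)$ I would run the given algorithm. The string $\zeta$ is nonempty and contains a sub-string of the form ${\sf S}\widetilde m$ (namely ${\sf S}\widetilde n$, substituted for the free $x$). I claim $n$ is the \emph{largest} such $m$: an occurrence of ${\sf S}\widetilde m$ in $\zeta$ with $m \geq n$ must either sit inside one of the substituted blocks ${\sf S}\widetilde n$ (forcing $m \leq n$, so $m = n$), or overlap such a block --- in which case Lemma~\ref{grumpysmurf} (overlapping occurrences of terms of the form ${\sf S}\widetilde{\cdot}$ coincide, applicable as $m, n \neq 0$) makes it coincide with the block, so again $m = n$ --- or lie entirely within the ambient $A(\mathsf{c})$-text, which is impossible because $|{\sf S}\widetilde m| = 7m+2 > 7n+1 = |\widetilde n| > |A(\mathsf{c});A(\mathsf{c})| \geq |A(\mathsf{c})|$ by Lemma~\ref{smurferella} applied to the acceptable string $A(\mathsf{c});A(\mathsf{c})$. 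The same length bound (with $m = n$) and overlap lemma show that the ${\sf S}\widetilde n$-occurrences in $\zeta$ are exactly the substituted blocks, so replacing each by $\mathsf{c}$ --- the parse-tree relabelling already used in the proof of monotonicity --- returns $\gamma = A(\mathsf{c})$. Finally ${\sf gn}_4^{-1}(n) = A(\mathsf{c});A(\mathsf{c})$ is of the form $\delta;\delta$ with $\delta := A(\mathsf{c})$, and $\gamma;\delta = A(\mathsf{c});A(\mathsf{c})$ is acceptable, so the algorithm outputs ${\sf gn}_5(\zeta) = {\sf gn}_4(\gamma;\delta) = n$. Therefore ${\sf gn}_6(A(\overline k)) = 2^n = k$, which is self-referentiality for $A(x)$ in the dyadic numeral system.

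The step I expect to be the real obstacle is the string-combinatorial claim in the previous paragraph: that un-substituting ${\sf S}\widetilde n$ from $A({\sf S}\widetilde n)$ recovers \emph{exactly} $A(\mathsf{c})$, i.e.\ that no occurrence of ${\sf S}\widetilde n$ straddles the seam between a substituted block and the surrounding text and that no spurious occurrence of ${\sf S}\widetilde m$ with $m \geq n$ is created. This is precisely where one must invoke the overlap lemma (Lemma~\ref{grumpysmurf}) and the length estimate (Lemma~\ref{smurferella}), in tandem with the parse-tree observation from the monotonicity proof that an ${\sf S}\widetilde n$-labelled node of a $\mathscr W_1$-expression never sits strictly below another one. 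The remaining ingredients --- the identity $\overline{2^n} = {\sf S}\widetilde n$, the membership $A({\sf S}\widetilde n) \in \mathscr W_1$, the degenerate case, and the effectiveness of ${\sf gn}_5$ --- are routine.
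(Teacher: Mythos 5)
Your proposal is correct and follows the same route as the paper: the theorem is assembled from the injectivity lemma, the monotonicity lemma, and the self-referentiality lemma immediately preceding it, the last of which the paper leaves unproved as ``the obvious insight.'' Your detailed verification of that lemma --- the identity $\overline{2^n}={\sf S}\widetilde n$, the use of Lemma~\ref{grumpysmurf} and Lemma~\ref{smurferella} to show that un-substituting ${\sf S}\widetilde n$ from $A({\sf S}\widetilde n)$ recovers exactly $A(\mathsf{c})$, and the run of the ${\sf gn}_5$-algorithm --- is exactly the argument the paper intends and relies on the same auxiliary lemmas.
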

	
	\noindent
	We note that our G\"odel numbering is elementary and the tracking functions for the connectives also will be elementary.
\end{document}